\theoremstyle{plain}
\newtheorem{thm}{Theorem}[section]
\newtheorem{lemma}[thm]{Lemma}
\newtheorem{prop}[thm]{Proposition}
\newtheorem{cor}[thm]{Corollary}
\newtheorem{conjec}[thm]{Conjecture}
\newtheorem{qtn}[thm]{Question}
\newtheorem{prob}[thm]{Problem}
\theoremstyle{definition}
\newtheorem{defn}[thm]{Definition}
\newtheorem{rem}[thm]{Remark}
\theoremstyle{remark}
\DeclareMathOperator{\SL}{SL} \DeclareMathOperator{\PSL}{PSL}
\DeclareMathOperator{\GL}{GL} 
 \DeclareMathOperator{\SO}{SO}
\DeclareMathOperator{\SP}{Sp}
 \DeclareMathOperator{\Ad}{Ad}
\DeclareMathOperator{\Diff}{Diff}\DeclareMathOperator{\sgn}{sgn}
\newcommand{\Fr}[1]{\ensuremath{\mathfrak{#1}}}
\newcommand{\fu}{\Fr u}
\newcommand{\fg}{\Fr g}
\newcommand{\fq}{\Fr q}
\newcommand{\BQ}{{\bf Q}}
\newcommand{\BR}{{\bf{R}}}
\newcommand{\BZ}{{\bf{Z}}}
\newcommand{\BC}{{\bf{C}}}
\newcommand{\T}{\ensuremath{{\bf T}}}
\title[$\mbox{SL}(n,\BR)$-actions on closed $n$-manifolds]{Smooth and analytic actions of $\mbox{SL}(n,\BR)$ and $\mbox{SL}(n,\BZ)$ on closed $n$-dimensional manifolds}
\author[D. Fisher]{David Fisher}
\address{Department of Mathematics\\Rice University\\Houston, TX 77005}
\email{davidfisher@rice.edu}
\author[K. Melnick]{Karin Melnick}
\address{Department of Mathematics\\University of Maryland\\College Park, MD, }
\email{karin@math.umd.edu}
\thanks{K. Melnick gratefully acknowledges support from NSF Award DMS-2109347.
D. Fisher gratefully acknowledges support from NSF Awards DMS-1906107 and DMS-2208430, the Miller Institute at Berkeley, IAS
and the Simons Foundation.  We thank Dave Witte Morris, Vincent Pecastaing, and Christian Rosendal for helpful conversations.}
\begin{document}

\begin{abstract}
The main theorem is a classification of smooth actions of
$\SL(n,\BR)$, $n \geq 3$, or connected groups locally isomorphic to it,
on closed $n$-manifolds, extending a theorem of Uchida
\cite{uchida.slnr.sn}.  We also construct new exotic actions of
$\SL(n,\BZ)$ on the $n$-torus and connected sums of $n$-tori, and we
formulate a conjectural classification of actions of lattices in
$\SL(n,\BR)$ on closed $n$-manifolds.  We prove some related results about
invariant rigid geometric structures for $\SL(n,\BR)$-actions.
\end{abstract}

\maketitle

\begin{center}
  \emph{In memory of Fuichi Uchida (1938--2021)}
  \end{center}

\section{Introduction}

\subsection{Classification of $\SL(n,\BR)$-actions}
\label{subsec:intro.classification}

Any smooth---even continuous,---faithful action  of $\SL(n,\BR)$ on an
$(n-1)$-dimensional manifold is the transitive action on ${\bf
  S}^{n-1}$, and the only other nontrivial action is the quotient action
  of $\PSL(n,\BR)$ on ${\bf RP}^{n-1}$. In 1979 F. Uchida constructed an infinite family of
real-analytic actions of $\SL(n,\BR)$ on ${\bf S}^n$ and proved his
construction yields all of them
\cite{uchida.slnr.sn}.  Previously, C.R. Schneider classified
all $C^\omega$ actions of $\SL(2,\BR)$ on closed surfaces and $\BR^2$
\cite{schneider.sl2r.surfaces}, see also \cite{stowe.sl2r.surfaces}.
A key role is played in both proofs by the linearizability theorem for
real-analytic actions of semisimple Lie groups on $(\BR^n, 0)$
due to Guillemin--Sternberg \cite{guillemin.sternberg.linearize} and
Kushnirenko \cite{kushnirenko.linearize}.  This theorem was partially
improved to $C^k$ linearizability of $C^k$ actions of $\SL(n,\BR)$ on
$(\BR^n, 0)$ for $k \geq 1$ and $n \geq 2$ by Cairns--Ghys
\cite{cairns.ghys.linearize}.  Relying partly on this result, we
classify smooth actions of $G$ on closed $n$-manifolds, where $G$ is
connected and locally
isomorphic to $\SL(n,\BR)$ and $n \geq 3$.

The actions are of two types---aside from a few exceptional transitive
actions in dimensions 3 and 4---, depending on the existence of
$G$-fixed points.  Let $Q < \SL(n,\BR)$ be the stablizer of a line in
the standard representation on $\BR^n$.  The actions without fixed
points are induced from $Q$ or $Q^0$-actions on $S^1$, yielding circle bundles
over ${\bf RP}^{n-1}$ or ${\bf S}^{n-1}$.  These are analogous to Schneider's actions on
${\bf T}^2$ or ${\bf K}^2$ for $n = 2$.
The actions with $G$-fixed points are actions on ${\bf S}^n$ or ${\bf RP}^n$, all
arising from the smooth version of Uchida's construction.  See
constructions I and II in Section \ref{subsec:Gactions} and the
classification theorems \ref{thm:no_fixed_points} and
\ref{thm:with_fixed_points} below.  Although the smooth linearization
theorem of Cairns--Ghys may permit a classification of smooth
$\SL(2,\BR)$-actions on surfaces, our classification will not be valid as
there is another family of actions in the case $n=2$.

A consequence of Theorems \ref{thm:no_fixed_points} and
\ref{thm:with_fixed_points} is that non-transitive real-analytic
actions can be parametrized
with real-analytic vector fields
on ${\bf S}^1$, which in turn are given by finitely-many real and
discrete parameters thanks to \cite{hitchin.vector.fields.s1},
along with some finite additional data; see Corollary
\ref{cor:analytic.paramzn} below.
This constitutes a \emph{smooth classification}, in the set-theoretic
sense (see \cite{rosendal.survey}), of analytic
$\SL(n,\BR)$-actions on closed $n$-manifolds, up to analytic conjugacy.

\subsection{Motivation from the Zimmer Program}

An important motivation for our classification are Zimmer's
conjectures on actions by semisimple Lie groups $G$, with no $\BR$-rank-one local
factors, and their
lattices on low-dimensional closed manifolds. See for example \cite{zimmer.icm,fisher.survey.festschrift, fisher.survey.update}.  The lowest possible
dimension for a nontrivial action of such a lattice should be the
minimal dimension $\alpha(G)$
of $G/Q$ where $Q$ is a maximal parabolic. For non-isometric, volume-preserving actions, the conjectured minimal dimension is the minimal
dimension $\rho(G)$ of a locally faithful
linear representation of $G$.  In general, the bound $\alpha(G) \leq
\rho(G) -1$ can have a significant gap; for $G= \SL(n,\BR)$, they are equal---that is $\alpha(G) =
n-1$ while $\rho(G)=n$.
For lattices in $\SL(n,\BR)$, $n
\geq 3$,  in joint work with A. Brown and S. Hurtado, the first author
proved both conjectured bounds
\cite{bfh.zimmer.conj, bfh.slmz, bfh.zimmer.nonuniform}.  They
moreover proved
dimension bounds for actions of lattices in
many other higher-rank
simple Lie groups; their results are sharp for
lattices in $\SL(n,\BR)$, $n \geq 3$, and $\SP(2n,\BR)$, $n \geq 2$.
These results resolved a major portion of
Zimmer's most famous conjecture.

Zimmer's Program asks more generally to what extent actions of
higher-rank semisimple Lie groups and their lattices on closed
manifolds arise from algebraic constructions.  The basic building
blocks of such constructions are actions on $G/H$ where $H$ is a
closed, cocompact subgroup, or actions of a lattice $\Gamma$ on
$N/\Lambda$, where $G$ acts by automorphisms of a nilpotent group $N$
and $\Gamma$ normalizes a cocompact lattice $\Lambda$.
Brown--Rodriguez-Hertz--Wang have announced a proof that any infinite action of a lattice $\Gamma < \SL(n,\BR)$ on
a closed $(n-1)$-manifold, where $n \geq 3$, extends to the standard action of $\SL(n,\BR)$ on ${\bf S}^{n-1}$ or
${\bf RP}^{n-1}$.

We propose to consider the above question for actions of $\SL(n,\BR)$ and its lattices on
closed $n$-manifolds, for $n \geq 3$. That this case is central to further progress is already emphasized
in \cite{fisher.survey.update}.  Theorems \ref{thm:no_fixed_points} and \ref{thm:with_fixed_points} fully
describe the $\Gamma$-actions that extend to $\SL(n,\BR)$.  The well-known action of the second type above, 
which does not extend to $\SL(n,\BR)$, is that of $\SL(n,\BZ)$ on ${\bf T}^n \cong
\BR^n/\BZ^n$.  In 1996 Katok--Lewis famously constructed exotic
$\SL(n,\BZ)$-actions on ${\bf T}^n$ in which the fixed point
corresponding to $ 0$ is blown up.  They also show that the weight determining the
action on the normal bundle of the blow-up can be freely chosen and that one
choice gives a volume-preserving exotic action \cite{katok.lewis.blowup}.

We construct new exotic actions of $\SL(n,\BZ)$, and its finite-index
subgroups, by gluing in ``exotic disks'' to ${\bf T}^n$ at $0$ or along
periodic orbits, and by forming connected sums of $n$-tori along
``exotic tubes.''  We conjecture the following classification of
actions of lattices in $\SL(n,\BR)$ on closed $n$-manifolds, for $n
\geq 3$. See Section \ref{subsec:lattice.actions} below for the relevant definitions.

\smallskip

{\bf Conjecture \ref{conj:gamma.actions}}
\emph{Let $\Gamma < \SL(n,\BR)$ be a lattice and $M$ a compact manifold of dimension $n$.
Then any action $\rho:\Gamma \rightarrow \Diff(M)$ either }
\begin{enumerate}
  \item \emph{extends to an action of $\SL(n,\BR)$ or $\widetilde{\SL(n,\BR)}$,}
  \item \emph{factors through a finite quotient of $\Gamma$, or}
  \item \emph{is an action built
from tori, $G$-tubes, $G$-disks, blow-ups, and two-sided blow-ups, with $\Gamma$ a
finite-index subgroup of $\SL(n,\BZ)$.}
\end{enumerate}

None of the actions in (1) and few of the actions in (3) are
volume-preserving (Proposition \ref{prop:vol.preserving}).  Conjecture \ref{conj:gamma.actions.volume} below asserts that
volume-preserving actions of $\Gamma$ as above are finite or are
actions of finite-index subgroups of $\SL(n,\BZ)$ built from tori,
blow-ups, and two-sided blow-ups with weight $n$ on the normal bundle,
as in \cite{katok.lewis.blowup}.

\subsection{Invariant rigid geometric structures}
\label{subsec:intro_rgs}

Largely inspired by Zimmer's results and conjectures, Gromov, together
with D'Ambra, proposed a program to investigate to what extent actions
of ``large''---for example, noncompact---Lie groups on closed
manifolds preserving a rigid geometric structure arise from algebraic
constructions (see \cite{gromov.rgs, dag.rgs}).  Benveniste--Fisher
proved that Katok--Lewis' actions do not preserve any rigid geometric
structure of algebraic type in the sense of Gromov
\cite{benveniste.fisher.no.rgs}.
We prove:

\smallskip

{\bf Theorem \ref{thm:no_projective}.}
\emph{Let $G$ be locally isomorphic to $\SL(n,\BR)$, acting smoothly on a compact $n$-manifold $M$,
preserving a projective structure $[ \nabla ]$.  Then $(M,[\nabla])$
is equivalent to}
\begin{itemize}
\item \emph{${\bf S}^n$ or ${\bf RP}^n$ with the standard projective
    structure; or }
\item \emph{a Hopf manifold, diffeomorphic to a flat
  circle bundle over ${\bf RP}^{n-1}$ with either trivial or ${\bf
    Z}_2$ monodromy. }
  \end{itemize}

  \smallskip

On the
other hand, we show in Proposition \ref{prop:invt.rgs} that all $\SL(n,\BR)$-actions on closed $n$-manifolds are
$2$-rigid in the sense of Gromov.

Pecastaing proved in \cite{pecastaing.lattice.global} that if a
uniform lattice in a simple Lie group $G$ of $\BR$-rank $\geq n$
admits an infinite action by projective
transformations of a closed $(n-1)$-manifold, then $G$ is locally
isomorphic to $\SL(n,\BR)$, and $\Gamma$ acts by the restriction of the standard
action on ${\bf S}^{n-1}$ or ${\bf RP}^{n-1}$.  Two interesting questions that
remain are:

\begin{qtn}
  \label{qtn:rgs.algebraic}
Are the
projective actions identified in Theorem \ref{thm:no_projective} the
only smooth actions of $\SL(n,\BR)$ on a closed $n$-manifold preserving
a rigid geometric structure of algebraic type?
\end{qtn}

\begin{qtn}
Given a non-affine, projective action of $\SL(n,\BZ)$ on a closed
  $n$-manifold, does it always extend to $\SL(n,\BR)$?
  \end{qtn}

\subsection{Other simple Lie groups.}

This work might be considered a special case of a more general
problem:

\begin{prob}
\label{problem:generalize}
For $G$ a simple Lie group of noncompact type, classify the smooth $G$-actions on compact manifolds of dimension $\alpha(G)+1$ up to smooth conjugacy.
\end{prob}

As above, $\alpha(G)$ is the minimal codimension of
a maximal parabolic subgroup of $G$.
This article concerns only groups locally isomorphic $\SL(n, \BR)$, $n
\geq 3$, because
this family is central in current research on the Zimmer Program, and
because the complexity of actions in
Problem \ref{problem:generalize} depends on the local isomorphism
type of $G$.

The only complete classification not mentioned so far is due to Uchida
for $G \cong \SL(n,\BC)$ \cite{UchidaComplex}.  In that case, there
are no faithful $G$-representations in dimension $2n-1= \alpha(G)+1$.
The actions in Uchida's classification correspondingly have no global
fixed points and are as in our Construction I below; these are induced
from actions of the maximal parabolic $Q$ or $Q^0$ on $S^1$.


We do not yet have a general conjectural picture for
all $G$.
Uchida has numerous
results for other simple groups \cite{UchidaSO1,UchidaSO2, UchidaSP,
  UchidaSurvey}, though none of these papers contains a complete
classification.
The only case in which we expect the classification to be more or less analogous to the one
presented here is for $G=\SP(2n,\BR)$, $n \geq 2$.
An interesting case is $G=\mbox{SO}(p,q)$, which has a faithful representation in
dimension $p+q = \alpha(G) +2$.  In the projectivization, the
stabilizers in open orbits are reductive, in contrast to what occurs
in our case (see Theorem \ref{thm:orbits}).
The other family of actions obtained by Schneider
\cite{schneider.sl2r.surfaces} referred to in Subsection
\ref{subsec:intro.classification} arises from the isomorphism
$\mbox{PSL}(2,\BR) \cong \mbox{SO}^0(1,2)$.

\section{Linearization and classification of orbit types}

A celebrated result of Guillemin--Sternberg \cite{guillemin.sternberg.linearize} and Kushnirenko \cite{kushnirenko.linearize}
states that a real-analytic
action of a semisimple Lie group $G$ on a real-analytic
manifold $M$ is
linearizable near any fixed point $p \in M$.  Linearization
means that there is a diffeomorphism $\Phi$ from a neighborhood $U$ of
$p$ to a neighborhood $V$ of $0 \in T_p M$ such that for all $g \in G$,
the germ of $g$ at $p$ equals the germ of $\Phi^{-1} \circ D_p g \circ
\Phi$.  Alternatively, for all vector fields $X$ arising from the $G$-action, $\Phi_*
X$ is the linear vector field $D_p X$ on $V \subset T_pM$.

Uchida's classification of analytic $\SL(n,\BR)$-actions on ${\bf S}^n$ for
$n \geq 3$ relies on this analytic linearization result.  Our
improvement to smooth actions is enabled by the smooth linearization
result of Cairns--Ghys \cite{cairns.ghys.linearize} for $\SL(n,\BR)$-actions on $\BR^n$
fixing $0$.  Also very useful for our arguments is a classification of
orbit types up to dimension $n$.  We recall their results in this section, together with
selected proofs.

\subsection{Classification of orbit types}
The following smooth orbit classification of \cite{cairns.ghys.linearize} will play a key
role in the sequel.

\begin{thm}[Cairns--Ghys \cite{cairns.ghys.linearize} Thm 3.5]
\label{thm:orbits}
Let $G$ be connected and locally isomorphic to $\SL(n,\BR)$ with $n\geq 3$, and assume $G$ acts continuously on a
topological manifold $M$. For any $x \in M$, the orbit $G.x$ is equivariantly
homeomorphic to one of the following:
\begin{enumerate}
  \item a point;
  \item ${\bf S}^{n-1}$ or ${\bf RP}^{n-1}$ with the projective action;
  \item $\BR^n \backslash\{0\}$ with the
    restricted linear action or
    $(\BR^n \backslash\{0\})/\Lambda$, for $\Lambda$ a discrete
    subgroup of the
    group of scalars $\BR^*$;
  \item one of the following closed, exceptional orbits, or a finite cover:
    \begin{itemize}
      \item For $n=3$, $\mathcal{F}_{1,2}^3$, the variety of complete flags in
        $\BR^3$
      \item For $n=4$, $\mbox{Gr}(2,4) = \mathcal{F}_2^4$, the Grassmannian of
        $2$-planes in $\BR^4$

        \end{itemize}
\end{enumerate}
\end{thm}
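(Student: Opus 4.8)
The plan is to reduce the statement to a purely Lie-theoretic classification of the isotropy subgroup. For $x \in M$ the stabilizer $H = \Stab_G(x)$ is the preimage of the closed point $\{x\}$ under the continuous orbit map $g \mapsto g.x$, hence closed, and therefore a Lie subgroup of $G$ by Cartan's closed-subgroup theorem. The orbit map descends to a continuous $G$-equivariant bijection $G/H \to G.x \subseteq M$; to upgrade this to a homeomorphism I would argue that the orbit is locally closed, which for a continuous action of a Polish group on a Polish space is equivalent, by the Effros/Baire-category theorem, to the orbit being nonmeager in its closure. Granting this, the equivariant homeomorphism type of $G.x$ is determined by the conjugacy class of $H \le G$, and since $G.x$ is a submanifold its dimension $\dim G/H = \codim_G H$ is at most $\dim M$; in the case $\dim M = n$ relevant here this gives $\codim_G H \le n$. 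So everything comes down to classifying closed subgroups $H \le \SL(n,\BR)$ with $\codim_G H \le n$, tracking the (possibly disconnected) component structure of $H$.

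The backbone is the minimal-codimension fact: every proper subalgebra $\mathfrak{h} \subsetneq \mathfrak{sl}(n,\BR)$ satisfies $\codim \mathfrak{h} \ge n-1$, with equality only for the stabilizer of a line or of a hyperplane. I would deduce this from the classification of maximal subalgebras of $\mathfrak{sl}(n,\BR)$ (Dynkin, Mostow, Tits): a maximal subalgebra is parabolic, reductive of maximal rank, or the normalizer of an irreducibly-acting simple subalgebra, and in every non-line/hyperplane case the codimension---$\dim \Gr(k,n) = k(n-k)$ for the $k$-plane parabolics, and strictly larger for the reductive and irreducible ones (e.g. $\codim \mathfrak{so}(n) = (n^2+n-2)/2$)---exceeds $n-1$ for $n \ge 3$. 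Embedding an arbitrary proper $\mathfrak{h}$ in a maximal subalgebra gives the bound, and also shows a subalgebra of codimension exactly $n-1$ is itself maximal. Consequently codimensions $1,\dots,n-2$ cannot occur; $\codim_G H = 0$ forces $H = G$ and a point orbit (case (1)); and $\codim_G H = n-1$ forces $\mathfrak{h}$ to be a line- or hyperplane-stabilizer parabolic, with homogeneous space ${\bf RP}^{n-1}$ (when $H$ is the full parabolic) or its oriented double cover ${\bf S}^{n-1}$, giving case (2).

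The substantive case is $\codim_G H = n$, i.e. $\codim_{\mathfrak{q}} \mathfrak{h} = 1$ when $\mathfrak{h}$ lies in a maximal parabolic $\mathfrak{q}$. Writing the line stabilizer as $\mathfrak{q} \cong \mathfrak{gl}_{n-1} \ltimes \BR^{n-1}$, with derived algebra $[\mathfrak{q},\mathfrak{q}] = \mathfrak{sl}_{n-1} \ltimes \BR^{n-1}$ of codimension one, I would project a hypothetical codimension-one subalgebra to the Levi factor $\mathfrak{sl}_{n-1}$ and use irreducibility of its action on the radical $\BR^{n-1}$ to show that for $n \ge 4$ the only possibility is $\mathfrak{h} = \mathfrak{sl}_{n-1} \ltimes \BR^{n-1}$, the Lie algebra of the vector stabilizer $\Stab_G(e_1)$. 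The corresponding orbit is $G\cdot e_1 = \BR^n \setminus \{0\}$; allowing $H$ to be disconnected, $H/H^0$ realizes a discrete group $\Lambda < \BR^*$ of scalars fixing the line, and the orbit becomes $(\BR^n \setminus \{0\})/\Lambda$---exactly case (3). A subalgebra not contained in any line/hyperplane parabolic must sit in a parabolic $P$ with $\dim G/P \le n$; since $\dim G/P_k = k(n-k) > n$ for $2 \le k \le n-2$ once $n \ge 5$, and $\dim G/B = \binom{n}{2} > n$ for $n \ge 4$, such orbits survive only in the low-dimensional coincidences $n = 3$ (the Borel subgroup, with flag variety $\mathcal{F}_{1,2}^3$, precisely where the Levi-projection argument breaks down since $\mathfrak{sl}_2$ admits codimension-one subalgebras) and $n = 4$ (the $2$-plane parabolic, with $\Gr(2,4)$), giving case (4).

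Finally I would account for the hypotheses ``$G$ locally isomorphic to $\SL(n,\BR)$'' and the disconnectedness of $H$: replacing $\SL(n,\BR)$ by $\PSL(n,\BR)$ or by a cover changes each homogeneous space only by a finite central quotient or cover, which is why cases (2) and (4) are stated up to the ${\bf S}^{n-1}/{\bf RP}^{n-1}$ dichotomy and ``finite cover,'' and why the scalar quotients in (3) appear. I expect the two main obstacles to be, first, importing the maximal-subalgebra classification cleanly enough to get the sharp bound $\codim \ge n-1$ and to exclude the intermediate parabolics outside $n = 3,4$; and second---and more subtly---the honest verification that the continuous orbit is equivariantly \emph{homeomorphic}, not merely continuously bijective, to $G/H$. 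The latter is genuinely necessary (an irrational winding shows continuous, even smooth, Lie-group orbits need not be locally closed), and it is the one place where the topological, as opposed to smooth, hypothesis must be handled with care.
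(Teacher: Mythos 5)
Your proof is essentially correct, but it takes a genuinely different route from the one in the paper. You reduce everything to the classification of maximal subalgebras of $\mathfrak{sl}(n,\BR)$ (parabolic versus reductive, \`a la Mostow--Dynkin--Karpelevich) together with codimension counts of partial flag varieties: reductive maximal subalgebras are discarded by quoting their dimensions, and the intermediate parabolics survive only in the coincidences $n=3,4$, which makes the source of the exceptional orbits $\mathcal{F}_{1,2}^3$ and $\Gr(2,4)$ very transparent. The paper avoids importing that classification: it rules out reductive isotropy of codimension $\le n$ by passing to compact real forms and invoking Kobayashi's sharp bound $\dim\Isom(N^m)\le m(m+1)/2$ (the same bound that handles the orbits of dimension $<n$ and identifies them with ${\bf S}^{n-1}$ or ${\bf RP}^{n-1}$), and it then pins down the non-reductive codimension-$n$ isotropy algebras by analyzing invariant subspaces $W$ of the complexified isotropy representation and their interaction with the real form via $W\cap\overline{W}$; the exceptional orbits fall out of the subcases $W\cap\overline{W}=0$ and $\dim_{\BR}(W\cap\overline{W})=2$. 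The two arguments converge at the end: the paper's final step --- $G_x$ must contain the nilradical $\BR^{n-1}$ of $Q$ because $\GL(n-1,\BR)$ acts irreducibly on it, and must project onto $\SL(n-1,\BR)$ --- is exactly your Levi-projection argument. Your approach buys modularity and a clean conceptual explanation of the low-dimensional exceptions at the cost of citing the maximal-subalgebra classification; the paper's is more self-contained. Finally, you explicitly raise the question of whether the orbit, in its subspace topology, is homeomorphic to $G/G_x$ (local closedness, Effros); the paper simply identifies $G.x$ with $G/G_x$ without comment, so on this point your treatment is the more careful one, though you leave it as a flagged assumption rather than resolving it.
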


\begin{rem}
Some details:
\begin{itemize}
\item The actions in (2) are faithful for $\mbox{SL}(n,\BR)$ and
$\mbox{PSL}(n,\BR)$, respectively, while $\widetilde{\mbox{SL}(n,\BR)}$
does not act faithfully on any $(n-1)$-dimensional manifold.
\item Similarly, the
  actions in (3) are faithful for $\mbox{SL}(n,\BR)$ and
  $\mbox{PSL}(n,\BR)$, respectively, while $\widetilde{\SL(n,\BR)}$
  does not have a faithful $n$-dimensional representation.
\item The fundamental group of $\mathcal{F}_{1,2}^3$ is the quaternion
 group $Q_8$.  The universal
 cover is ${\bf S}^3$, on which $\widetilde{\SL(3,\BR)}$ acts faithfully.

 \item The fundamental group of
    $\mbox{Gr}(2,4)$ is $\BZ_2$ (see, \emph{e.g.}, \cite{gr24.topology}).  The universal cover is $S^2 \times
    S^2$, which can be identified with the space of oriented
    $2$-planes in $\BR^4$, on which $\mbox{SL}(4,\BR)$ acts faithfully.
    \end{itemize}
\end{rem}

To correct some oversights and provide additional details, we present
the proof here, more or less following the arguments of
\cite{cairns.ghys.linearize}.

\begin{proof}
An orbit $\mathcal{O}_x = G.x$ is a homogeneous space of $G$, identified with $G/G_x$, for $G_x \leq
G$ closed; thus the orbit is smooth with smooth $G$-action.  A maximal
compact subgroup $K$ is locally isomorphic to $\mbox{SO}(n)$, with dimension $n(n-1)/2$, and preserves a Riemannian
metric on $\mathcal{O}_x$.  By \cite[Thm II.3.1]{kobayashi.transf}, the isometry
group of an $m$-dimensional Riemannian manifold has dimension at most
$m(m+1)/2$, with equality if and only if it is ${\bf S}^m$ or ${\bf
  RP}^m$ with the standard $\mbox{SO}(m+1)$- or
$\mbox{PO}(m+1)$-action, respectively.  Thus any orbit of
dimension less than $n$ is as in parts (1) and (2) of the theorem; in
particular, all such orbits are closed.

Now assume that $\mathcal{O}_x$ is $n$-dimensional and consider $\fg_x \otimes
\BC$.  There is no reductive subalgebra of $\fg_{\BC} =
\mathfrak{sl}(n,\BC)$ of complex codimension less than or equal $n$.
Indeed, a suitable
Cartan decomposition $\mathfrak{k}_x + \mathfrak{p}_x$ of the
reductive subalgebra would align
with that of $\fg$, so the compact form $\mathfrak{k}_x + i
\mathfrak{p}_x$ would be contained in that of $\fg$.  By the same dimension
arguments as in the previous paragraph, there is no closed subgroup of
$\mbox{SU}(n)$ of codimension less than or equal $n$, assuming $n >
2$.  Thus there is no
compact
subalgebra of
$\mathfrak{su}(n)$ of codimension less than or equal $n$ for $n > 2$---a contradiction.

Assuming now that $\fg_x \otimes
\BC$ is not reductive,
 the isotropy
representation on $V = (\fg/\fg_x)_{\BC}$ will be reducible.  Assume there is
an invariant $p$-dimensional complex subspace, for $0 < p < n$.  The
stabilizer in $\mathfrak{sl}(n,\BC)$ of a $p$-dimensional subspace has
codimension $p(n-p)$, so $n \geq p(n-p)$.  Then $p=1$ or $n-1$ and
$\fg_x \otimes \BC$ has codimension $1$ in the subspace stabilizer, or
$n=4$ and $p=2$.

In the case $n=4$ and $p=2$, our dimension assumptions imply that
$\fg_x \otimes \BC$ equals the full stabilizer in
$\mathfrak{sl}(n,\BC)$ of
a $2$-dimensional complex subspace $W \subset V \cong
{\BC}^4$.  Because $\fg_x \otimes \BC$ does not preserve any proper
subspace of $W$, the intersection $W_0 = W \cap \overline{W} $ is
real-even-dimensional and $\fg_x$-invariant.  Since $W$ is assumed to
be a proper subspace, $W_0 \neq V_0$.

If $\mbox{dim } W_0 = 2$,
then $\fg_x$ is contained in the stabilizer of a $2$-dimensional subspace of
$\BR^4$.  As this stabilizer has real codimension $4$, it is equal to $\fg_x$.  The orbit $\mathcal{O}_x$ is the real Grassmannian $\mbox{Gr}(2,4)$
or a finite covering space.

The remaining possibility is that $W_0 = 0$.  This means $V = W
\oplus \overline{W}$.  Given $v_0 \in V_0$, there is a unique $w \in
W$ such that
$$ v_0 = \frac{1}{2}(w + \bar{w})$$
Then
$$ J(v_0) = \frac{i}{2}(\bar{w} - w)$$
defines a $\fg_x$-equivariant automorphism of $V_0$ with $J^2 =
-\mbox{Id}$.  Now
$\fg_x$ is contained in a
subalgebra isomorphic to $\mathfrak{sl}(2,\BC)$.  The codimension of
$\mathfrak{sl}(2,\BC)$ in $\mathfrak{sl}(4,\BR)$ is $9$, so this case
does not arise under our assumptions.

Next consider $p=1$ and $n \geq 3$.   For the invariant complex line
$W$, if $W \cap \overline{W} = 0$, then $\fg_x \otimes \BC$ preserves
a flag $W \subset U = W \oplus \overline{W} \subset \BC^n$.  The
stabilizer of such a flag has codimension $2n-3$, which is less than
or equal $n$ only for $n=3$.  In the case $n=3$, this flag is a full
flag, and the stabilizer has codimension $3$, so it equals $\fg_x
\otimes \BC$.  The intersection $U_0 = U \cap \overline{U}$ is
real-two-dimensional and $\fg_x$-invariant.  As in the previous
paragraph, the $\fg_x \otimes \BC$-invariant decomposition $U = W
\oplus \overline{W}$ defines an $\fg_x$-invariant complex structure on
$U_0$.  Now $\fg_x$ is contained in the stabilizer of a 2-plane $U_0$ in
$\BR^3$ together with a complex structure on $U_0$.  The codimension of
this stabilizer in $\mathfrak{sl}(3,\BR)$ is $4$.  Thus this case does
not arise under our assumptions.

Now we can assume $W_0 = W \cap \overline{W}$ is a real line, and $\fg_x$
has codimension $1$ in its stabilizer.  As in the previous paragraph,
$\fg_x$ must be irreducible on $V_0/W_0$ unless $n=3$, in which case,
if $\fg_x$ is reducible on this quotient, it is the Borel subalgebra
of $\mathfrak{sl}(3,\BR)$.  This case corresponds to $\mathcal{O}_x
\cong \mathcal{F}_{1,2}^3$ or a finite cover.

Now we assume $\fg_x$ is irreducible on $V_0/W_0$, and it is a
codimension-one subalgebra of the stabilizer $\mathfrak{q}$ of a line
in $\BR^n$.  The image of $Q$ on $V_0/W_0$ is
$\mbox{GL}(n-1,\BR)$, and $Q \cong \mbox{GL}(n-1,\BR) \ltimes
\BR^{n-1}$.  If the intersection $G_x \cap \BR^{n-1}$ were a
proper subspace, then $G_x$ would project onto $\mbox{GL}(n-1,\BR)$ by
dimension considerations.  But this would not be consistent with $G_x$ being
a subgroup, because $\mbox{GL}(n-1,\BR)$ is irreducible on $\BR^{n-1}$.  Therefore, $G_x$ has full intersection with this kernel
and projects onto a closed, codimension-one, irreducible subgroup of $\mbox{GL}(n-1,\BR)$.
Our earlier arguments show that $\mbox{SL}( n-1,\BR)$ has no closed,
codimension-one subgroup for $n \geq 4$, while for $n=3$, the unique
such subgroup is reducible.  Finally, we conclude that the projection
of $\fg_x$ to $\mathfrak{gl}(n-1,\BR)$ equals $\mathfrak{sl}(n-1,\BR)$, and $\fg_x
\cong \mathfrak{sl}(n-1,\BR) \ltimes \BR^{n-1} \lhd \mathfrak{q}$.
Let $E^0$ be the connected, normal subgroup of $Q$ isomorphic to
$\mbox{SL}(n-1,\BR) \ltimes \BR^{n-1}$, and $E_\Lambda \lhd Q$ the
inverse image of $\Lambda < Q/E^0 \cong \BR^*$.  The possibilities in
(3) correspond to $G_x = E^0$ or $E_\Lambda$, respectively, for
$\Lambda$ a nontrivial discrete subroup of $\BR^*$.

In the cases with $p=n-1$ and $n \geq 3$ the outer automorphism $g
\mapsto (g^{-1})^t$ gives an equivariant diffeomorphism from
$\mathcal{O}_x$ to one of the orbits in (3) or (4).
  \end{proof}

\subsection{Fixed Points and linearization}

Let $K < G$ be a maximal compact subgroup.  There is a $K$-invariant
Riemannian metric on $M$---it can be obtained by averaging any
Riemannian metric on $M$ over $K$ with respect to the Haar measure.
We will denote this metric $\kappa$.  The $K$-action
near a $K$-fixed point is linearizable, via the exponential map of $\kappa$.

\begin{prop}
  \label{prop:fixed_discrete}
  Let $G$ be connected and locally isomorphic to $\mbox{SL}(n,\BR)$.
For a nontrivial smooth action of $G$ on a connected manifold $M$ of dimension $n$, the fixed set
$\mbox{Fix}(G)$ is discrete.  In particular, if $M$ is compact, then
$\mbox{Fix}(G)$ is finite.
  \end{prop}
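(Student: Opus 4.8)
The plan is to analyze the germ of the action at a fixed point through the linearized isotropy representation, showing that every fixed point is \emph{either} isolated in $\Fix(G)$ \emph{or} interior to it, and then to rule out the second alternative by connectedness.

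First I would fix $p \in \Fix(G)$ and linearize the maximal compact $K$ near $p$ using the exponential map of the $K$-invariant metric $\kappa$ set up above, so that in suitable coordinates $K$ acts on a neighborhood of $0 \in T_pM$ by its isotropy representation $D_p \colon K \to \Ort(n)$. The derivative of the full action is a representation of $\fg = \mathfrak{sl}(n,\BR)$ on the $n$-dimensional space $T_pM$. Since $\fg$ is simple and its smallest nontrivial irreducible real representation has dimension $n$ (namely the standard representation $\BR^n$ and its dual), any nontrivial irreducible summand already fills $T_pM$; hence the isotropy representation at $p$ is either trivial or isomorphic to the standard or dual representation.

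Next I would exploit this dichotomy. If the isotropy is nontrivial, then $K$ (locally $\SO(n)$) acts on $T_pM\cong\BR^n$ with no nonzero fixed vector, so $(T_pM)^K = 0$; by the $K$-linearization, $p$ is the unique $K$-fixed point nearby, hence the unique $G$-fixed point nearby, and $p$ is isolated in $\Fix(G)$. If instead the isotropy is trivial, then $(T_pM)^K = T_pM$, so $K$ acts trivially on a neighborhood $U$ of $p$, i.e. every action vector field $\tilde X$ with $X \in \mathfrak{k}$ vanishes on $U$. Writing $\fg = \mathfrak{k}\oplus\mathfrak{p}$ and using the symmetric-pair relation $[\mathfrak{k},\mathfrak{p}] = \mathfrak{p}$, the fields from $\mathfrak{p}$ arise as brackets $[\tilde X,\tilde Y]$ with $X\in\mathfrak{k}$, and these vanish on $U$ as well; so all action vector fields vanish on $U$, and since $G$ is connected it acts trivially on $U$. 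Thus the trivial case forces $p \in \mathrm{int}\,\Fix(G)$.

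Finally I would run an open--closed argument on $\Omega := \mathrm{int}\,\Fix(G)$. It is open by definition; to see it is closed, take $q \in \overline{\Omega}$, note $q \in \Fix(G)$ as the fixed set is closed, and observe that $K$-fixed points accumulate at $q$, which by the $K$-linearization at $q$ forces $(T_qM)^K \neq 0$, so the isotropy at $q$ is trivial and $q \in \Omega$. As $M$ is connected and $\Omega \neq M$ (the action is nontrivial), we conclude $\Omega = \emptyset$. Hence every point of $\Fix(G)$ has nontrivial isotropy and is isolated, so $\Fix(G)$ is discrete, and a closed discrete subset of a compact $M$ is finite. The one delicate point is the passage from ``$K$ acts trivially near $p$'' to ``$G$ acts trivially near $p$,'' which is precisely where $[\mathfrak{k},\mathfrak{p}]=\mathfrak{p}$ enters; alternatively one may invoke the Cairns--Ghys smooth linearization of the full $G$-action directly, at the cost of tracking the covering-group bookkeeping for groups merely locally isomorphic to $\SL(n,\BR)$.
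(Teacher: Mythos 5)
Your proof is correct and follows essentially the same route as the paper: the dichotomy on the isotropy representation (trivial versus standard/dual), the $K$-linearization via the exponential map of the invariant metric $\kappa$, and the conclusion that nontrivial isotropy has no $K$-fixed vectors and hence forces isolation. The only difference is cosmetic: where you dispose of the trivial-isotropy case locally via $[\mathfrak{k},\mathfrak{p}]=\mathfrak{p}$ and an open--closed argument, the paper globalizes directly ($K$ acts by $\kappa$-isometries, so triviality near a point forces triviality on all of the connected $M$, and then simplicity of $G$ kills the whole action); both are valid.
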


  \begin{proof}
Let $x \in M$ be a $G$-fixed point.  First suppose the isotropy
representation of $G$ at $x$ is trivial.  Then via the exponential map
of $\kappa$, we deduce that the $K$-action is trivial in a
neighborhood of $x$.  Then $K$ is trivial on all of $M$, and so is
$G$.

Now assume the isotropy representation of $G$ at $x$ is nontrivial;
then it is irreducible and factors through $\SL(n,\BR)$.  The
isotropy of $K$, which is isogeneous to $\mbox{SO}(n)$, is also
locally faithful and thus irreducible; in particular, there are no nontrivial fixed vectors.
Now by linearization of the $K$-action on a neighborhood, say, $U$, of
$x$, there are no
$K$-fixed points other than $x$ in $U$.  In particular, there are no
$G$-fixed points other than $x$ in $U$.
    \end{proof}

  We recall here the smooth linearization theorem for $\SL(n,\BR)$ of \cite{cairns.ghys.linearize}, exactly as stated there.

\begin{thm}[Cairns--Ghys \cite{cairns.ghys.linearize} Thm 1.1]
  \label{thm:linearization}
  For all $n > 1$ and for all $k=1, \ldots, \infty$, every
  $C^k$-action of $\SL(n,\BR)$ on $(\BR^n, 0)$ is $C^k$-linearizable.
  \end{thm}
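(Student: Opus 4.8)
The plan is to reduce the statement to linearizing the maximal compact subgroup and then to propagate linearity across the noncompact directions using the algebraic structure of $\SL(n,\BR)$. First I would record the linear model: the differential $\rho = D_0$ at the fixed point is an $n$-dimensional representation of $\SL(n,\BR)$, and for $n \geq 2$ the smallest nontrivial representation has dimension $n$, realized only by the standard representation and its dual, which are interchanged by the outer automorphism $g \mapsto (g^{-1})^t$. So after possibly applying this automorphism we may assume the target linear action is the standard representation, in which $\SL(n,\BR)$ acts transitively on $\BR^n \ssm \{0\}$, with the stabilizer of a base point being the group $\SL(n-1,\BR) \ltimes \BR^{n-1}$ appearing in part (3) of Theorem \ref{thm:orbits}.

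Next I would linearize the compact part. Let $K \cong \SO(n)$ be maximal compact. Using the exponential map of the $K$-invariant metric $\kappa$ (Bochner linearization) produces a $C^k$ chart near $0$ in which $K$ acts by its linear isotropy, hence orthogonally; in particular the $K$-invariant radius function $r(x) = |x|$ is defined in these coordinates. So without loss of generality $K$ already acts linearly, and the remaining task is to find a $K$-equivariant $C^k$ diffeomorphism $\Phi$ fixing $0$ with $D_0 \Phi = \id$ that conjugates the full $G$-action to the standard linear one.

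To build $\Phi$ I would exploit transitivity together with the orbit classification. Using a polar ($KAK$) decomposition $g = k_1 a k_2$, fix a diagonal one-parameter subgroup $\{a_t\}$ whose standard action is hyperbolic and a base ray transverse to $K$; every point in a punctured neighborhood of $0$ is carried onto this ray by some element of $K$ and some $a_t$. Matching the $a_t$-dynamics along the base ray to the prescribed linear dynamics and spreading out $K$-equivariantly defines $\Phi$ orbit by orbit, and Theorem \ref{thm:orbits} guarantees that each orbit near $0$ is (a cover of) $\BR^n \ssm \{0\}$, so the matching is consistent and $\Phi$ is a diffeomorphism away from $0$. Equivalently, one solves the conjugacy equation $\Phi \circ a_t = \rho(a_t) \circ \Phi$ on an annular fundamental domain for $\{a_t\}$ and extends by equivariance.

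The hard part will be regularity at the fixed point: the orbitwise construction is automatically $C^k$ on the punctured neighborhood, but one must show $\Phi$ extends $C^k$ across $0$ with $D_0 \Phi = \id$. This is the genuine analytic content of the theorem and the reason the smooth case lies deeper than the analytic Guillemin--Sternberg--Kushnirenko result. I would control it through the hyperbolicity of $a_t$: iterating the contraction $a_{-t}$ as $t \to \infty$ drives points toward $0$, while the conjugacy equation lets one estimate $\Phi$ and its derivatives through the linear model, bootstrapping $C^k$ regularity at $0$ from uniform hyperbolic estimates. An alternative route first solves the problem formally---the obstructions to formal linearization vanish by semisimplicity, as Whitehead's lemma kills $H^1(\fg, \mathrm{Sym}^j(\BR^n)^* \otimes \BR^n)$, so the homological equation is solvable at every order---then realizes the formal series by Borel's lemma and removes the flat remainder by the same contraction argument. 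Either way, eliminating the flat error near the fixed point is the central obstacle.
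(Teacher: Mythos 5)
First, a point of order: the paper does not prove this statement. Theorem \ref{thm:linearization} is quoted verbatim from Cairns--Ghys \cite{cairns.ghys.linearize} (``exactly as stated there''), and the paper uses it as a black box. So there is no in-paper proof to compare against; what you have written is a proposal for reproving an external result.

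On the merits, your outline contains a step that fails as stated, and it is precisely at the point you identify as the crux. You propose to handle regularity at the fixed point by ``iterating the contraction $a_{-t}$'' and, in the alternative route, to ``remove the flat remainder by the same contraction argument.'' But in the standard representation of $\SL(n,\BR)$ on $\BR^n$ there is no contracting element: every element has determinant $1$, so every one-parameter diagonal subgroup $\{a_t\}$ has both eigenvalues of modulus greater than $1$ and less than $1$, and $a_{-t}$ pushes points on its expanding subspace away from the origin rather than toward it. Uniform hyperbolic bootstrapping of $C^k$ estimates at $0$, and the standard trick of killing a flat error by conjugating with iterates of a contraction, are therefore unavailable in the form you invoke. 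The hyperbolicity that genuinely exists is one-dimensional: $a^t$ acts with derivative $e^t$ at $0$ on the $\SO(n-1)$-fixed, $Q$-invariant curve (the curve $\ell_0$ that this paper exploits repeatedly), and the Cairns--Ghys argument is routed through linearizing that one-dimensional germ and then spreading out by $K$-equivariance and the group relations --- not through a global contraction on $\BR^n$. Two further gaps: your formal-series/Whitehead/Borel alternative cannot address the cases of finite $k$, in particular $k=1$, which the theorem explicitly covers (there are no higher-order terms to solve for in a $C^1$ action); and you never treat the case where the isotropy representation $D_0$ is trivial, which the statement also includes. Your appeal to Theorem \ref{thm:orbits} to conclude that every orbit near $0$ is a cover of $\BR^n\ssm\{0\}$ also needs an argument --- the orbit classification allows $(n-1)$-dimensional and point orbits, and ruling them out near the fixed point is itself part of the work (the paper does exactly this inside the proof of Theorem \ref{thm:with_fixed_points}, using the $\{a^t\}$- and $\sigma$-dynamics on $\Sigma^0$).
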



There are two nontrivial $\SL(n,\BR)$-representations on $\BR^n$, the
standard one, which we will denote $\rho$,
and $\rho^*(g) = \rho((g^{-1})^t)$.  Under $\rho$, there is a
$Q$-invariant line, pointwise fixed by $E^0$, where $Q$ and $E^0$ are
the subgroups introduced in the proof of Theorem \ref{thm:orbits}.
Under $\rho^*$, there is no $Q$-invariant line when
$n \geq 3$; rather, $Q$ acts irreducibly on an invariant $(n-1)$-dimensional subspace.

\section{Constructions of smooth actions}
\label{sec:constructions}

In this section we construct all smooth, non-transitive, nontrivial
$\SL(n,\BR)$-actions on $n$-dimensional compact manifolds for $n \geq 3$.  The proof that the list is complete will
be given in the next section.  We will also give a conjecturally complete
description of smooth actions of lattices $\Gamma < \SL(n,\BR)$ on
compact $n$-dimensional manifolds.


Throughout
this section, $G=\SL(n,\BR)$ and $n \geq 3$.  The actions will be faithful or will
factor through faithful actions of $\PSL(n,\BR)$.
Recall that a maximal parabolic subgroup $Q < G$ is isomorphic to $\GL(n-1,\BR) \ltimes
\BR^{n-1}$. Let  $L \cong GL(n-1,\BR)$ be a Levi subgroup of $Q$ and $\pi:Q \rightarrow L$
the projection.   Let $\{ a^t \}$ be the one-parameter subgroup generating the identity component of the
center of $L$.  Let $C \cong O(n-1)$ be a maximal
compact subgroup of $L$.  Let $\sigma \in Z(C)$ project to $-1$ in $\BR^* \cong L/[L,L]$.
 Let $Q^0$ and $L^0$ be the identity components of $Q$ and
 $L$, respectively; note that $\pi(Q^0)=L^0$.  Define homomorphisms
\begin{eqnarray*}
  \nu_0: Q^0 \rightarrow \BR & \qquad & q \mapsto \ln (\det(\pi(q)) \\
\nu: Q \rightarrow \BR^* \cong \BZ_2 \times \BR & \qquad & q \mapsto
                                                            \left( \sgn(\det
                                                             (\pi(q)),
                                                             \ln
                                                             \left|
                                                            \det(\pi(q))
                                                            \right| \right)
 \end{eqnarray*}

The kernel of $\nu$ is $E_\Lambda$ with $\Lambda = \{ \pm 1\}$, which
will henceforth be denoted $E$; its identity
component $E^0$ is the kernel of $\nu_0$.

\subsection{Constructions of $G$-actions on closed $n$-manifolds}
\label{subsec:Gactions}

There are two families of non-transitive actions of $\SL(n,\BR)$ on
compact $n$-manifolds.  The first family have no $G$-fixed points and
are circle bundles over ${\bf RP}^{n-1}$ or ${\bf S}^{n-1}$.
Actions in the second family have two or one
$G$-fixed points and are diffeomorphic to ${\bf S}^n$ or ${\bf RP}^n$,
respectively.

 \subsubsection{Construction I: without global fixed points.} Let
 $\Sigma^0$ be a smooth circle.

 \begin{lemma}
   \label{lem:involution}
   If $\tau$ is a nontrivial smooth involution of $\Sigma^0$, then it has 0 or 2 fixed points.
 \end{lemma}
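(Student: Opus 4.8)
The plan is to prove that a nontrivial smooth involution $\tau$ of the circle $\Sigma^0 \cong {\bf S}^1$ has either $0$ or $2$ fixed points. The key is that $\tau$ is an orientation-preserving or orientation-reversing diffeomorphism, and these two cases behave very differently.

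First I would note that since $\tau^2 = \id$, the derivative satisfies $(D\tau)^2 = \id$ along the action, so $\tau$ is a diffeomorphism of finite order and in particular its degree is $\pm 1$. If $\deg(\tau) = +1$, then $\tau$ is orientation-preserving; I claim it then has no fixed points. Indeed, a nontrivial orientation-preserving involution of ${\bf S}^1$ is, up to smooth conjugacy, the antipodal map (a free involution giving ${\bf S}^1/\tau \cong {\bf S}^1$ as a double cover), and more directly: if an orientation-preserving involution had a fixed point $x$, then $D_x\tau = +1$ (orientation-preserving and order two on a $1$-dimensional space forces the derivative to be $+1$, since $-1$ would reverse orientation), so $\tau$ fixes $x$ to first order; by the linearization of the $\tau$-action near $x$ (or simply because an orientation-preserving diffeomorphism of an interval with a fixed interior point and derivative $+1$ there, being an involution, must be the identity on a neighborhood), $\tau$ would be the identity near $x$, hence identically the identity by analyticity/connectedness of the fixed set argument, contradicting nontriviality.

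Next I would treat the orientation-reversing case $\deg(\tau) = -1$. Here the Lefschetz number is $L(\tau) = \tr(\tau^* \mid H^0) - \tr(\tau^* \mid H^1) = 1 - (-1) = 2 \neq 0$, so $\tau$ must have at least one fixed point. I would then show the fixed points are isolated and count to exactly two: at any fixed point $x$, orientation-reversing forces $D_x\tau = -1$, so $x$ is a nondegenerate fixed point and the fixed set is discrete, hence finite on the compact circle. Parametrizing $\Sigma^0 = \BR/\BZ$ and lifting $\tau$ to $\widetilde\tau$ on $\BR$ with $\widetilde\tau(t+1) = \widetilde\tau(t) - 1$, the fixed points of $\tau$ correspond to solutions of $\widetilde\tau(t) = t + k$ for integers $k$; the function $t \mapsto \widetilde\tau(t) - t$ is strictly decreasing modulo the periodicity and drops by exactly $2$ over one period, yielding precisely two fixed points on ${\bf S}^1$.

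The main obstacle I anticipate is making the orientation-preserving case airtight: I must rule out fixed points entirely rather than merely bounding their number, and the cleanest route is to argue that at a fixed point the derivative is $+1$ and then invoke that a smooth orientation-preserving involution cannot have an isolated fixed point where it acts trivially to first order without being globally trivial. An alternative, more uniform approach that sidesteps case analysis would be purely topological: realize $\tau$ via its mapping degree, use that the quotient $\Sigma^0/\langle\tau\rangle$ is a compact $1$-manifold (a circle when $\tau$ is free, an interval when $\tau$ has fixed points, with fixed points mapping to boundary points), and recall that a $1$-manifold has either $0$ or $2$ boundary points; since each boundary point of the interval arises from a single fixed point, we get exactly $0$ or $2$ fixed points, which is precisely the claim. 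I would likely present this quotient-orbifold argument as the primary proof, as it is shortest and most robust, reserving the Lefschetz computation as a remark.
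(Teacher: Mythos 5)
Your proposal is correct, and all three of the routes you sketch (orientation case analysis with Lefschetz plus the lift to $\BR$, and the quotient-$1$-manifold argument) go through. The paper's own proof is much terser and organized differently: it averages a metric to get a $\tau$-invariant Riemannian metric, concludes that $\Fix(\tau)$ is either all of $\Sigma^0$ or finite, observes that at an isolated fixed point the differential must be $-\mbox{Id}_1$ (since $+\mbox{Id}_1$ would force $\tau$ to be the identity near that point via the exponential map of the invariant metric), and then asserts the two-point count in one sentence. So the key \emph{local} ingredient is the same in both arguments --- a fixed point with derivative $+1$ forces global triviality, hence nontrivial involutions with fixed points are orientation-reversing with derivative $-1$ there --- but your \emph{global} counting is genuinely different and more detailed: the Lefschetz number $L(\tau)=2$ for existence, and the strictly decreasing displacement function $\widetilde\tau(t)-t$, which drops by exactly $2$ per period, for the exact count; the quotient-orbifold argument is a clean alternative that packages the same local model. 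Your versions buy explicitness where the paper's phrase ``the complement has exactly two connected components'' is left to the reader; the paper's version buys brevity by implicitly reducing to isometries of a round circle, i.e.\ to elements of $O(2)$. One small simplification available to you: in the orientation-preserving case you do not need linearization at all, since an increasing involution of an interval (or of the complement of a fixed point in $\Sigma^0$) is forced to be the identity by pure monotonicity ($\tau(x)>x$ implies $x=\tau(\tau(x))>\tau(x)>x$), which makes that case airtight without any appeal to derivatives.
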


 \begin{proof}
   This is a fact of topology, but since our action is smooth,
  we will use the existence of a $\tau$-invariant metric.
  The fixed set of $\tau$ is closed and equals $\Sigma^0$ or is
finite.  Assuming $\tau$ is not trivial, the differential at these fixed
points is $-\mbox{Id}_1$.  Then if $\mbox{Fix}(\tau)$ is nonempty, the complement has
exactly two connected components, and $\mbox{Fix}(\tau)$ comprises
two points.
   \end{proof}

   Let $\{ \psi^t_X \}$ be a smooth
 flow on $\Sigma^0$, generated by a vector field $X$.
 Let $\tau$ be a smooth involution on $\Sigma^0$ commuting with $X$,
 or the involution of $\Sigma^0 \times \{1,-1 \}$ exchanging the two
components.  Let $\Sigma = \Sigma^0$ in the first case, and $\Sigma^0
\times \{ -1,1 \}$ in the second.  In the second case, extend $X$ to
$\Sigma$ by pushing forward via $\tau$ to the other component.

 Define an action of $\BR^*$ on $\Sigma$ by
 $$ t \mapsto \psi^{\ln |t|}_X \circ \tau^{(1- \mbox{sgn}(t))/2}$$
Via $\nu: Q \rightarrow \BR^*$, this lifts to an action of
$Q$ on $\Sigma$, which we will denote $\mu_{X,\tau}$.  Then
$$ M = G \times_Q \Sigma$$
is a closed manifold with smooth $G$-action.

Construction I with $\Sigma = \Sigma^0$, $X=0$, and $\tau = \mbox{Id}$, gives
the standard action on ${\bf RP}^{n-1}$ product the trivial action on
${\bf S}^1$, while $\Sigma = \Sigma^0
\times \{ -1,1\}$, $X=0$, and $\left. \tau \right|_{\Sigma^0}  =
\mbox{Id}$ gives the standard action on ${\bf S}^{n-1}$ product with the
trivial action on ${\bf S}^1$.  If $\left. \tau \right|_{\Sigma^0} =
\mbox{Id}$ and
$X$ is a constant nonvanishing vector field, then
$M$ is homogeneous and equivalent to $E_\Lambda$ as in
Theorem \ref{thm:orbits} (3), for
$\Lambda$ a lattice in $\BR^*$.
These are called Hopf manifolds and will be significant in
Theorem \ref{thm:no_projective} below.

   \subsubsection{Construction II: with global fixed points}
   \label{subsec:Gwithfixedpoint}

  Next we construct $\SL(n,\BR)$-actions on ${\bf S}^n$.  These
   are the same as those constructed by Uchida in \cite[Sec
   2]{uchida.slnr.sn}, except they are allowed to be only smooth.

   Let $\Sigma_+ = [-1,1]$.  Let $X$ be a
   smooth vector field on $\Sigma_+$ vanishing at $-1$ and $1$,
such that
   $D_{-1}X = 1 = D_{1} X$.  Notice that $X$ is nonvanishing
   on a nonempty open interval with $-1$ as endpoint, and similarly
   for $1$; moreover, $X$ has at least one zero in $(-1,1)$.
   Concretely, there is $z_- > -1$ the minimum of the zero set of $X$
   in $(-1,1)$ and $z_+< 1$ the maximum of the zero set of $X$ in $(-1,1)$.  It
   could be that $z_- = z_+$.

   Define a $Q^0$-action on $(-1,1)$ by letting $\{ a^t \}$ act by
   the flow $\{ \psi^t_X \}$ and then pulling back via the
   epimorphism $\nu_0 : Q^0 \rightarrow \BR^*_{>0}$.  Let $M' = G \times_{Q^0}
   (-1,1)$, a bundle over ${\bf S}^{n-1}$ with interval fibers.

  In $(\BR^n,0)$ with the standard action of $\SL(n,\BR)$, let $\ell_0$ be one of the two $Q^0$-invariant rays
   from the origin, pointwise fixed by $E^0$.  The restriction of $\{
   a^t \}$ to $\ell_0$ is smoothly equivalent to $\{ \psi^t_X \}$ on
   $(-1,z_-)$.  Identifying $\ell_0$ with $(-1,z_-)$ by this
   equivalence and extending $G$-equivariantly gives a smooth gluing of
   $(\BR^n,0)$ to $M'$, resulting in a manifold again diffeomorphic to
   $\BR^n$.  Similarly gluing another copy of $(\BR^n,0)$ along $\ell_0$
 to $(z_+,1)$ in a $Q^0$-equivariant way yields a closed manifold
   $M$, diffeomorphic to ${\bf S}^n$, on which $\SL(n,\BR)$ acts with
   two global fixed points.

   Last, we construct actions of $\mbox{SL}(n,\BR)$ on ${\bf RP}^n$.  Let
$\Sigma_{+} = [-1,0]$ and let $X$ be a smooth vector
field vanishing at $-1$ and $0$, such that $D_{-1} X = 1$.  As above,
define a $Q_0$-action on $\Sigma_{+}$ by composing the flow $\psi^t_X$
with $\nu_0$.
Let $M' = G \times_{Q^0} (-1,0]$.  As above, glue $\BR^n$ to
$M'$ by gluing $\ell_0$ to $(-1,z_-)$, for $z_- \leq 0$ the minimum of
the zeros of $X$ on $(-1,0]$.  The result is a manifold with boundary, diffeomorphic to ${\bf D}^n$.
The antipodal map corresponds to $[(g,x)] \mapsto [(g \sigma,x)]$,
which is well-defined because $\sigma \in Q$ and $\nu_0$ is invariant
under conjugation by $\sigma$.  The
$Q^0$-action on the $\Sigma_+$-fibers is equivariant with respect to
this involution.
Now quotient by the antipodal map restricted to the boundary of the
disk, mapping the boundary onto ${\bf RP}^{n-1}$.
The resulting space is diffeomorphic to
${\bf RP}^n$, with smooth, faithful $\SL(n,\BR)$-action.
Note that these actions on ${\bf RP}^n$ can be obtained as two-fold
quotients from actions
on ${\bf S}^n$ when $X$ is invariant under
$-\mbox{Id}_1$ on $\Sigma_+$.
 The standard $SL(n,\BR)$-representation $\rho$ on
$\BR^n$ product
with a one-dimensional trivial representation yields, after
projectivization, the ``standard action'' on ${\bf RP}^n$, obtained
from a standard embedding of $\SL(n,\BR)$ in $\SL(n+1,\BR)$.
This action is obtained from $X$ vanishing only at $-1$ and $0$, with
derivative $-1$ at $0$.  The standard action on ${\bf
  S}^n$ is the double cover, which arises from $X$ vanishing at
$-1,0,$ and $1$ only, with derivative $-1$ at $0$.

\subsection{New examples of lattice actions in supraminimal dimension}

The goal of this section is to develop new examples of actions of $\SL(n, \BZ)$ and its finite-index subgroups on manifolds
of dimension $n$, including new actions on the $n$-dimensional torus $\T^n$.  A sample result is

\begin{thm}
\label{thm:ourlatticetheorem}
Let $\Gamma \leq \SL(n,\BZ)$ be a finite-index subgroup. Then for $r>2$ and $r=\omega$ there
exist uncountably many $C^r$ actions of $\Gamma$ on $\T^n$, none of
which is $C^1$-conjugate to another.
\end{thm}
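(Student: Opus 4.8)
The plan is to realize the family as weighted \emph{exotic disks} glued into the standard linear action at the fixed point $0 \in \T^n$, in the spirit of Katok--Lewis \cite{katok.lewis.blowup} but returning to the torus itself rather than to a blow-up. Fix once and for all a hyperbolic element $\gamma_0 \in \Gamma$ whose largest eigenvalue $\mu$ is real, simple, and satisfies $\mu > 1$; such elements exist in every finite-index subgroup of $\SL(n,\BZ)$ since $\Gamma$ is Zariski dense in $\SL(n,\BR)$, and after replacing $\gamma_0$ by $\gamma_0^2$ we may assume $\mu > 0$. Let $u \in {\bf S}^{n-1}$ be the corresponding eigendirection. Near $0$ the $\Gamma$-action is the restriction of the standard linear $\SL(n,\BR)$-action on $(\BR^n,0)$. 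I replace a linearizing ball about $0$ by its oriented (two-sided) blow-up $S^{n-1} \times [0,\eps)$, on which $\Gamma$ acts by $\gamma \cdot (w,s) = (\gamma w / \norm{\gamma w}, \psi_\gamma(w,s))$, with the projective action on the exceptional sphere $S = S^{n-1} \times \{0\}$; the radial data $\psi_\gamma$ are chosen so that the normal multiplier of $\gamma_0$ at $[u] \in S$ equals $\mu^\alpha$ for a prescribed weight $\alpha > 0$, interpolating smoothly to the unmodified linear cocycle $\norm{\gamma w}$ in the bulk. Capping the boundary sphere by a $\Gamma$-disk with a single interior fixed point---one hemisphere of Construction~II of Section~\ref{subsec:Gactions}, restricted to $\Gamma$---yields a closed manifold; since the net effect is to excise a ball and glue back a ball, it is again diffeomorphic to $\T^n$. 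For each $r > 2$ and for $r = \omega$, taking the radial profiles of class $C^r$ (resp.\ real-analytic) produces an action $\rho_\alpha \colon \Gamma \to \Diff^r(\T^n)$; the regularity analysis of the weighted gluing is exactly as in \cite{katok.lewis.blowup} and leaves an interval of admissible weights $\alpha$ for each such $r$, hence a continuum of actions.

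The invariant that separates the family is the normal multiplier at the exceptional sphere. On $S$ the point $[u]$ is fixed by $\rho_\alpha(\gamma_0)$; the derivative $D_{[u]}\rho_\alpha(\gamma_0)$ splits into its tangential part---the derivative of the projective action on ${\bf S}^{n-1}$, with eigenvalues $\mu_j/\mu$ independent of $\alpha$---and its normal part, which by construction is $\mu^\alpha$. Eigenvalues of the derivative at a fixed point are $C^1$-conjugacy invariants: if $h$ conjugates $\rho_\alpha$ to $\rho_{\alpha'}$ and $h([u])$ is the corresponding fixed point of $\rho_{\alpha'}(\gamma_0)$, then $D\rho_{\alpha'}(\gamma_0)$ and $D\rho_\alpha(\gamma_0)$ are linearly conjugate there. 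Because $h$ carries the exceptional sphere to the exceptional sphere, it respects the splitting into tangential and normal directions, so the normal multipliers agree, giving $\mu^\alpha = \mu^{\alpha'}$. Since $\mu > 1$, the map $\alpha \mapsto \mu^\alpha$ is strictly increasing, so distinct weights yield actions that are not $C^1$-conjugate, and the interval of admissible $\alpha$ gives uncountably many.

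The main obstacle is the claim that a $C^1$-conjugacy must carry the exceptional sphere of $\rho_\alpha$ to that of $\rho_{\alpha'}$. Unlike the ambient $G$-action, the classification of orbit types in Theorem~\ref{thm:orbits} does not directly apply to the discrete group $\Gamma$, so I cannot simply invoke it to locate invariant spheres. Instead I would characterize $S$ intrinsically for the $\Gamma$-action---for instance as the unique $\Gamma$-invariant topological $(n-1)$-sphere on which the restricted action is topologically conjugate to the projective action on ${\bf S}^{n-1}$, distinguished from the linear bulk (which admits no such invariant sphere) and from the capped fixed point. A $C^1$-conjugacy preserves this dynamically defined set and sends $\rho_\alpha(\gamma_0)$-fixed points to $\rho_{\alpha'}(\gamma_0)$-fixed points, which closes the argument. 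Establishing this rigidity at the low regularity $C^1$---in particular ruling out spurious invariant spheres and verifying that the weight is genuinely realized as a persistent normal multiplier rather than being normalized away---is the crux; note that these actions do not extend to $\SL(n,\BR)$, so the $C^1$-linearization theorem (Theorem~\ref{thm:linearization}) does not obstruct the non-triviality of the weight.
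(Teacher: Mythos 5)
Your construction is genuinely different from the paper's, but the step you yourself identify as ``the crux'' is exactly where the proof is missing, so as written this is a gap rather than a complete argument. Knowing that eigenvalues of the derivative at a fixed point are $C^1$-conjugacy invariants only helps once you know \emph{which} fixed point of $\rho_{\alpha'}(\gamma_0)$ the conjugacy sends $[u]$ to: $\rho_\alpha(\gamma_0)$ has many fixed points (the other eigendirections on the exceptional sphere, the capped interior fixed point, and the finitely many $\gamma_0$-periodic points in the linear bulk), and you neither rule out spectral coincidences among them nor prove that a $C^1$-conjugacy of the \emph{$\Gamma$-actions} must carry the exceptional sphere to the exceptional sphere. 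Your proposed intrinsic characterization of $S$ as ``the unique invariant topological $(n-1)$-sphere with projective restricted dynamics'' is plausible but is precisely the rigidity statement that needs proof at $C^1$ regularity; nothing in the proposal supplies it. A secondary problem is the case $r=\omega$: ``interpolating smoothly to the unmodified linear cocycle in the bulk'' is a partition-of-unity move that is unavailable analytically, so the analytic members of your family are not actually constructed.

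For comparison, the paper avoids locating an exceptional sphere altogether. It glues a $G$-disk (Definition \ref{defn:gdisk}), carrying a full $\SL(n,\BR)$-action determined by a vector field $X$ on an interval, into $\T^n$ at $\pi(0)$ via a $BS_X$ patch; uncountability comes from varying the conjugacy class of $X$ (e.g.\ residues at its zeros). The distinguishing invariant is then the conjugacy class of $X$ itself, and Proposition \ref{prop:G.disks.distinct} recovers $X$ from the \emph{discrete} action by choosing, via Prasad--Rapinchuk, a $\BQ$-irreducible torus $T \leq \Gamma$ inside a conjugate $\hat{Q}^0$ and showing $\hat{\nu}_0$ is faithful, hence has dense image in $\BR$, on $T\cap\Gamma$; density forces the $\Gamma$-action to determine the whole flow $\{\psi^t_X\}$ on the invariant interval. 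That arithmetic input is what replaces your unproved ``the conjugacy preserves the exceptional sphere'' step, and it is the ingredient your proposal would need to either import or substitute with a genuine $C^1$ rigidity argument.
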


\subsubsection{Preliminaries: $G$-actions on blow-ups, disks and tubes}

From the constructions in the previous section, we will obtain exotic
$G$-actions on ${\bf D}^n$ and on ${\bf S}^{n-1} \times I$, for $I$ a
closed interval, which we will call \emph{$G$-disks} and
\emph{$G$-tubes}, respectively (Defs \ref{defn:gtube},
\ref{defn:gdisk}).  These will be patched into the standard action
on the torus to build more general actions than previously constructed.


 The blow-up of $\BR^n$ at the origin is constructed as the following algebraic
 subvariety of $\BR^n \times {\bf RP}^{n-1}$:
 $$B = \{ (x,[v]) \in  \BR^n \times {\bf
   RP}^{n-1} \ : \ x=cv \ \mbox{for some} \ c \in \BR \} $$
The $G$-action on $\BR^n \times {\bf RP}^{n-1}$ preserves $B$.
The projection onto the second coordinate exhibits $B$ as the tautological line bundle
over ${\bf RP}^{n-1}$.
In particular, $B$ is a manifold with an analytic $G$-action.  The
points of $B$ projecting to $0$ in the first factor form a
subvariety $E \cong {\bf RP}^{n-1}$, called the exceptional
divisor.

Let $BS$ be the universal cover of $B$.
The 2-to-1 covering $BS \rightarrow B$
is $G$-equivariant.
Note that although $BS$ is diffeomorphic to $\BR^n
\backslash \{0\}$, the $G$-action is not the restriction of
the linear action from $\BR^n$.
A construction of $BS$ analogous with that of $B$ is as follows: view ${\bf S}^{n-1}$ as
$\BR^n \backslash \{0\} / \BR^{*}_{>0}$ and define
 $$BS = \{ (x, [v]) \in \BR^n \times {\bf S}^{n-1} \ : \  x = cv \text{ for
   some }  c \in \BR  \}$$
As for $B$, the projection on the second factor exhibits $BS$ as a line bundle over ${\bf S}^{n-1}$.
 It is the tautological line bundle and is trivial.
 The covering $BS \rightarrow B$ corresponds to taking the quotient by
 the diagonal action of $-1$.  The subset of $BS$
 projecting to $ 0$ in the first factor will also be denoted
 $E$.  We define
 $$ BS^+ = \{ (x,[v]) \in BS \ : \ x = cv \ \mbox{for some } c \in \BR^*_{>0} \}$$
 and similarly for $BS^-$.

 Denote by $\ell_B \subset B$
 the fiber
 over the unique $Q$-fixed point in ${\bf RP}^{n-1}$.  The $Q$-action
 on $\ell_B$ factors through the homomorphism $\nu: Q \rightarrow \BR^*
 \cong \BZ_2 \times \BR$ and includes a flow, generated by
 a vector field $X_B$.  For $\{ a^t \} < Q$ the connected component
 of the center of $L$, the flow along $X_B$ is the $\{a^t\}$-action on $\ell_B$.
  For the linear $G$-action via $\rho$ on  $\BR^n$, let $\ell$ be the unique $Q$-invariant line,
 on which $Q$ acts via $\nu$.  The action of $\nu_0(Q^0)$ on $\ell$ is generated by a
vector field $X$ vanishing at $0$, which can be
 normalized so that $D_0 X =1$.
 Under the projection $B \rightarrow
 \BR^n$, the line $\ell_B$ is mapped to $\ell$.
 Up to
 normalization, we can assume that $D_0 X_B =1$.

 The $G$-action on $B$ is equivalent to the induced action on $G \times_Q \ell_B$.
 Similarly, on $BS$ there is a unique $Q^0$-invariant line
$\ell_{BS}$, and the $G$-action is the induced action on $G \times_{Q^0} \ell_{BS}$.
Moreover, $\ell_{BS}$ is $Q$-invariant. The deck transformations of
the covering $BS \rightarrow B$ can be realized as a $Q/Q^0$-action
commuting
with the $G$-action (as in the construction in the previous section of the action on ${\bf
  RP}^n$ as a quotient of an action on ${\bf D}^n$).

Modifying the vector field $X$ yields different $G$-actions on $B$ and
$BS$.  Given any $X$ on $\ell_B$ invariant under the $\BZ_2$-action and
vanishing only at $0$, the resulting $G$-space
$G \times_Q \ell_B$ will be denoted $B_X$.
The order of vanishing and derivatives of $X$ at $0$ can be arbitrary.
Similarly, any vector field $X$ on $\ell_{BS}$ vanishing only at $0$
yields a $G$-space diffeomorphic to $BS$, which we will denote $BS_X$.
If $X$ is moreover invariant under the $\BZ_2$-action on $\ell_{BS}$, there is
again a $G$-equivariant covering $BS_X \rightarrow B_X$.

The $G$-space $BS_X$, or in some cases just $BS_X^+$, will serve as a patch
between the standard torus action and the building blocks for our
exotic actions.  The blow-ups in Katok--Lewis' construction in
\cite{katok.lewis.blowup} are obtained by gluing a
space
$B_X$ into ${\bf T}^n \backslash \{0\}$.  They present $B_X$ and the
gluing in coordinates.  We will provide a coordinate-free construction
of their actions below.
Here are the building blocks for our exotic actions.

\begin{defn}
  \label{defn:gtube}
  Let $X$ be a vector field on $I = [-1,1]$ with $X(-1) = X(1) = 0$.
  Let $Q^0$ act on
  $I$ via $\nu_0$ followed by the flow along $X$.  The induced $G$-space $G
  \times_{Q^0} I$ is called a \emph{$G$-tube}.
\end{defn}

Now let $X$ be a vector field on $[-1,1]$ invariant by $-\mbox{Id}_1$, with
$D_0X=1$.  Let $\dot{D} = G \times_{Q^0} (0,1]$.  As in subsection
\ref{subsec:Gwithfixedpoint}, the linear $G$-action on $\BR^n$ can be
glued equivariantly onto an open subset of $\dot{D}$, yielding a $G$-action on ${\bf
  D}^n$.

\begin{defn}
  \label{defn:gdisk}
  These $G$-actions on ${\bf D}^n$ are called \emph{$G$-disks}.
  \end{defn}






\subsubsection{Lattice actions on compact manifolds}
\label{subsec:lattice.actions}

By gluing $G$-disks in place of fixed points in ${\bf T}^n$, we
will construct the examples of Theorem
\ref{thm:ourlatticetheorem} and prove the claim that they are $C^1$-distinct.
We will provide a common context for the famous examples of
Katok--Lewis \cite{katok.lewis.blowup} and our new examples, as well as an additional construction
using $G$-tubes to glue together tori.  The section finishes with a
conjecture on the classification of $\Gamma$-actions on closed
$n$-manifolds, for $\Gamma < \SL(n,\BR)$ a lattice.

For the standard action of $\Gamma = \SL(n,\BZ)$ on $\T^n$, local modifications
near the fixed point $0$
can be achieved by modifying the $\SL(n,\BZ)$-action on $\BR^n$
near $0$.
Given a fundamental domain $F \subset \BR^n$
containing the set
$U=(-\frac{1}{2}, \frac{1}{2})^n$, let, for each $\gamma \in \Gamma$,
$V_{\gamma}$ equal $\gamma^{-1}(U) \cap U \subset \BR^n$.  On $V_\gamma$ the covering map
$\pi: \BR^n \rightarrow \T^n$ is a diffeomorphism such that
$\pi(\gamma(x))=\gamma(\pi(x))$.  Changing the $\Gamma$-action on
$\BR^n$ on a neighborhood of $0$ contained in $U$ yields a
well-defined action on ${\bf T}^n$.
The same procedure is valid at any fixed point $p$ for $\Gamma$ or
a finite-index subgroup.
By such local modification a $G$-disk $D$ as in Definition \ref{defn:gdisk} can be glued
into the standard action ${\bf T}^n \backslash \{ 0 \}$, using a suitable $BS_X$ as the patch between
them.

To begin with, assume the vector field $X$ on
$[-1,1]$ determining $D$ vanishes at $1$ with derivative $1$ there.
Identify a collar neighborhood of $E$ in $BS^+$ radially with
a neighborhood of the puncture in $\dot{U} = \pi(U \backslash
\{  0 \})$ in ${\bf T}^n$, thus gluing $BS$ to the punctured torus.
The resulting space
inherits a well-defined, smooth $\Gamma$-action.  Next, identify a
collar neighborhood of $\partial D$ in $D$ radially with a collar
neighborhood of $E$ in $BS^-$, thus gluing $D$ to $BS$, while
retaining a smooth $\Gamma$-action.
The result of performing both gluings on one copy of $BS$ is a closed
manifold $M$ diffeomorphic to ${\bf T}^n$ with a well-defined
$\Gamma$-action. There is an invariant hypersphere corresponding to
$E$, such that the $\Gamma$-action in a neighborhood of this
hypersphere in $M$ is
equivalent to the $\Gamma$-action near $E$ in $BS$.  The
$\Gamma$-action is thus smooth in this neighborhood, and on
all of $M$.  In fact, if the $G$-action on $D$ is real-analytic, then
this gluing yields a $C^\omega$ action of $\Gamma$ on $M \cong {\bf T}^n$.

A general $G$-disk can be glued in to a punctured torus by the following procedure.
Let $h$ be a diffeomorphism of $\T^n\backslash\{ \pi( 0) \}$
that is the identity outside $\dot{U}$ and
is radial on its support.
Conjugate the $\SL(n,\BZ)$-action on $\T^n\backslash\{ \pi( 0) \}$ by
$h$.
Near the puncture, this action coincides with a modified $\SL(n,\BZ)$-action $\mu$ on
$\BR^n \backslash \{ 0 \}$.
Note that by Theorem \ref{thm:linearization}, the local linearization theorem, this action will not in most
cases extend to $\BR^n$.  It does, however, extend over a suitable $B_X$ or $BS_X$ patched into the puncture.  Indeed, the restriction of $Q^0$ to $\ell_0
\backslash \{ 0 \}$ corresponds to a vector field $X_0$ that extends to a vector field on
$\ell_0$ vanishing at the origin, which we will also denote by $X_0$. Identifying $\ell_0$ smoothly with
$\ell_B$ pushes $X_0$ forward to a vector field $X_B$ vanishing at $0$.  Then inducing
over $Q_0$ defines the $G$-action on $BS_X$.
The restriction to $BS_X^+$ is equivalent to $\mu$ near $0$, because
$h$ is radial.  Gluing $BS_X \backslash BS_X^-$ into $\BR^n \backslash \{ 0 \}$
along $BS_X^+$ gives a smooth $G$-action on $\BR^n$ with an open ball removed, which
is equivalent to $\mu$ on the complement of the boundary.  Making the local
identification with the torus, this gives a modified $\SL(n,\BZ)$-action on the torus minus an open
ball, equivalent to the originally modified action on the complement
of the boundary.  Now gluing a collar neighborhood of $\partial D$ into $BS^-_X$ exactly
as before yields an action on ${\bf T}^n$,
such that the $\Gamma$-action on $M \backslash D$ is equivalent to the
original action on $\T^n \backslash\{0\}$.

\begin{prop}
  \label{prop:G.disks.distinct}
Two actions of $\Gamma = \SL(n,\BZ)$ on ${\bf T}^n$ obtained by gluing a
$G$-disk in place of $\pi(0)$ as above are conjugate if and only if the vector
fields on $[-1,1]$ determining the $G$-disks are conjugate.
\end{prop}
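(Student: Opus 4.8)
The plan is to prove both directions of the equivalence. The statement concerns two actions $\rho_1, \rho_2$ of $\Gamma = \SL(n,\BZ)$ on $\T^n$, each obtained by gluing a $G$-disk determined by a vector field $X_1$ (resp. $X_2$) on $[-1,1]$ into the standard action at $\pi(0)$. Here "conjugate $G$-disks" should mean the vector fields agree up to orientation-preserving diffeomorphism of $[-1,1]$ (equivalently, the flows $\{\psi^t_{X_i}\}$ are smoothly conjugate), since that is precisely the data classifying $G$-disks as $G$-spaces. Let me sketch each direction.

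For the easy direction ($\Leftarrow$), suppose the vector fields $X_1$ and $X_2$ are conjugate via a diffeomorphism $\phi$ of $[-1,1]$. Then $\phi$ induces a $G$-equivariant diffeomorphism between the two $G$-disks $D_1 = G\times_{Q^0}(0,1]$ and $D_2$, since the $Q^0$-action on the fiber is built from $\{a^t\}$ acting by the flow $\{\psi^t_{X_i}\}$ pulled back via $\nu_0$, and conjugating the flow intertwines these actions. Because the gluing procedure is canonical away from $D$ (the $\Gamma$-action on $M\setminus D$ is equivalent to the original action on $\T^n\setminus\{0\}$) and the patch $BS_X$ is determined by the behavior of $X$ near the boundary, I would extend $\phi$ by the identity on the complementary region and check that the extension descends to a $\Gamma$-equivariant diffeomorphism of the two copies of $\T^n$. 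The main point to verify is that the gluing regions match up under $\phi$; since $\phi$ is a diffeomorphism of the fiber interval, this is routine.

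For the harder direction ($\Rightarrow$), suppose $h: \T^n \to \T^n$ is a $\Gamma$-equivariant diffeomorphism conjugating $\rho_1$ to $\rho_2$. The strategy is to produce a $\Gamma$-invariant—and then $Q^0$-equivariant—geometric structure that recovers the vector field. The key observation is that each action has a distinguished $\Gamma$-invariant hypersurface, namely the invariant hypersphere corresponding to the exceptional divisor $E$ inside the patch $BS_X$; this is where the $\Gamma$-action is locally equivalent to the action near $E$ in $BS$. The conjugacy $h$ must carry the invariant structure of $\rho_1$ to that of $\rho_2$. I would argue that $h$ sends the distinguished hypersphere of $\rho_1$ to that of $\rho_2$, and that it must also respect the $G$-disk regions $D_1, D_2$ — since these are characterized intrinsically as the locus where the action extends to $G$ (using the orbit classification of Theorem \ref{thm:orbits}) rather than merely to $\Gamma$. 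Thus $h$ restricts to a $\Gamma$-equivariant diffeomorphism $D_1 \to D_2$, which by the induced structure $D_i = G\times_{Q^0}(0,1]$ must intertwine the $Q^0$-actions on the interval fibers and hence conjugate the flows $\{\psi^t_{X_1}\}$ and $\{\psi^t_{X_2}\}$, giving conjugacy of $X_1$ and $X_2$.

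The main obstacle will be the last step: passing from a $\Gamma$-equivariant diffeomorphism on the torus to a $Q^0$-equivariant conjugacy of the flows on the fiber interval. A priori $h$ is only $\Gamma$-equivariant, not $G$-equivariant, so I cannot directly invoke the induced-action description. The resolution should use that near the distinguished hypersphere the $\Gamma$-action extends to a $G$-action (so $\Gamma$-equivariance forces $G$-equivariance there by density/rigidity of the extension — $\Gamma$ is Zariski-dense in $G$, and the actions are real-analytic or at least smooth with analytically-determined orbit structure), together with the fact that the radial coordinate along the fiber is canonically pinned down by the $\{a^t\}$-dynamics. I would make this precise by observing that the stabilizer structure identifies the fiber $\ell$ intrinsically, so $h$ must map fibers to fibers, and the restriction to a fiber conjugates the $\{a^t\}$-flows, which are exactly $\{\psi^t_{X_i}\}$.
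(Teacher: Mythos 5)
Your overall architecture (a conjugacy must respect the distinguished hypersurface and the $G$-disk, hence restricts to the fiber interval and conjugates the dynamics there) is reasonable, and the easy direction is fine. But the step you yourself flag as the main obstacle --- passing from $\Gamma$-equivariance to a conjugacy of the flows $\{\psi^t_{X_i}\}$ --- is exactly where the real content lies, and your proposed resolution does not work. The issue is quantitative, not formal: the $\Gamma$-action on the fiber interval only records the flow at the times $t$ lying in $\hat\nu_0(\Gamma\cap\hat Q^0)$, so a $\Gamma$-equivariant conjugacy only constrains the vector field if that set of times is dense in $\BR$. Zariski density of $\Gamma$ in $G$ gives you nothing here; indeed for the \emph{standard} $Q^0$ the subgroup $\Gamma\cap Q^0$ consists of integer matrices whose Levi block has determinant $\pm 1$, so $\nu_0$ vanishes identically on it and the $\Gamma$-action sees the time-$0$ map only. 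Likewise the appeal to analyticity is unavailable, since the actions are allowed to be merely $C^r$.

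The paper closes this gap with a number-theoretic input you are missing: by a theorem of Prasad--Rapinchuk there is a $\BQ$-irreducible torus $T$ in $\Gamma$; one places $T$ inside a conjugate $\hat Q^0$ and shows $\hat\nu_0$ is injective on $T\cap\Gamma$ (otherwise the Zariski closure of the kernel would be a proper $\BQ$-subtorus of $T$, contradicting irreducibility). Injectivity on a free abelian group of rank $n-1\geq 2$ forces the image in $\BR$ to be dense, so the $\Gamma$-action does determine $X$ on the $\hat Q^0$-invariant interval, and the proposition follows. Without some argument of this kind --- producing a subgroup of $\Gamma$ whose image under $\hat\nu_0$ is dense --- your final step is unsupported, and the proof is incomplete.
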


\begin{proof}
Consider a $G$-disk $D$ determined by $\nu_0 : Q^0 \rightarrow
\BR$.
For a conjugate $\hat{Q}^0 < G$, there is a unique $\hat{Q}^0$-invariant
interval $\hat{I} \subset D$ on which the $\hat{Q}^0$-action is
given by
$\hat{\nu}_0: \hat{Q}^0 \rightarrow \BR$.  The given $G$-action on $D$
is the same as that induced from $\hat{Q}_0$ by $\hat{\nu}_0$.  We
will choose $\hat{Q}_0$ so that $\Gamma \cap \hat{Q}^0$ has dense image in $\BR$ under $\hat{\nu}_0$.
This implies that the $\Gamma$-action
determines the vector field $X$ on $\hat{I}$ for some, and hence any, choice
of conjugate $\hat{Q}^0$, which suffices to prove the proposition.

By \cite[Thm 1]{prasad.rapinchuk} of Prasad--Rapinchuk, there is a $\BQ$-irreducible
Cartan subgroup---often referred to as a $\BQ$-irreducible torus---in
$\Gamma$.  Denote it by $T$. Irreducibility here means that $T$ contains
no nontrivial, proper, algebraic subtorus.  There is a conjugate $\hat{Q}^0$ of
$Q^0$ in $G$ containing $T$.  It suffices to show that $\hat{\nu}_0$ is faithful
on $T_{\Gamma}= T \cap \Gamma$, since then the image will necessarily be dense in $\BR$.
If $\ker(\hat{\nu_0}) \cap T_{\Gamma}$ is nontrivial, let $T^0$ be its Zariski closure.  It is the kernel of the rational map $\hat{\nu}_0$ restricted to $T$.  Since $\Gamma$, and therefore $\ker(\hat{\nu_0}) \cap T_{\Gamma}$, consists of $\BZ$-points in $G$, $T^0$ is defined over $\BQ$.  It is thus an algebraic
subgroup of $G$, contained in $T$, contradicting $\BQ$-irreducibility.

\end{proof}

Here are additional constructions of $\Gamma$-actions, for $\Gamma =
\SL(n,\BZ)$ or a finite subgroup, on closed
$n$-manifolds.


{\bf Blow-up and two-sided blow-up.} For a vector field $X$ on
$\ell_{BS}$ invariant under the deck group of the cover $BS_X
\rightarrow B_X$, patching $BS_X$ into ${\bf T}^n \backslash \{ \pi(0)
\}$ and dividing by the deck group yields an $\SL(n,\BZ)$-action on ${\bf T}^n$ with $\pi(0)$ blown up.
Katok--Lewis' blow-up examples correspond to $X$ vanishing to first order at $0 \in \ell_{BS}$.
They computed that choosing $X$ with derivative $n$ at $0$ yields a volume-preserving action  \cite{katok.lewis.blowup}.

A related construction is what we will call a \emph{two-sided blow-up}.  (It is
discussed briefly in \cite{katok.lewis.blowup} and in more detail in \cite{benveniste.fisher.no.rgs, fisher.whyte.gd}.)
Start with two tori with $\pi(0)$ removed, $\dot{\bf T}^n_+$ and $\dot{\bf T}^n_-$.
Patch them together with $BS_X$ by gluing $BS^+_X$ into the punctured
neighborhood $\dot{U}_+$ and $BS^-_X$ into $\dot{U}_-$.  The resulting space is the connected sum of two tori with a smooth $\SL(n,\BZ)$-action.  A second variant involves a single torus punctured at two points which are fixed by
a finite-index subgroup $\Gamma < \SL(n,\BZ)$.  Patching neighborhoods of the two punctures together with $BS_X$ yields another smooth $\Gamma$-space.
Some, but not all, of these examples are equivariant covers of blow-up actions as constructed above, in which case the full $\SL(n,\BZ)$ acts on the cover.  As for the blow-ups, these actions are volume-preserving if $X$ has derviative $n$ at $0$.

{\bf Connected sum along $G$-tube. } Let $T$ be a $G$-tube with
defining vector field $X$.  Let $BS_X^a$
and $BS_X^b$ be two copies of $BS_X$ with exceptional divisors $E_a$ and $E_b$,
respectively.  For each of the
two boundary components of $T$, identify a collar neighborhood radially
with a collar neighborhood of $E_i$ in $(BS^i_X)^-$, for $i=a,b$, to glue $BS_X^a$ and $BS_X^b$
to $T$, one on each end.  Let ${\bf T}^n_a$ and ${\bf T}^n_b$ be two tori with $\pi(0)$ removed.
Then identify collar neighborhoods of $E_i$ in $(BS_X^i)^+$
radially with neighborhoods of the punctures in ${\bf T}^n_i$, for $i=a,b$, respectively.  The result is two punctured tori connected along the $G$-tube
$T$, with smooth (or even real-analytic) $\Gamma$-action.

{\bf Multiple $G$-disks along a finite orbit. }
Given a finite $\SL(n,\BZ)$-orbit $O$, a finite-index subgroup
$\Gamma$ fixes each point of $O$.  Gluing $G$-disks into some or all
of these $\Gamma$-fixed points by the procedure explicated above
yields additional $\Gamma$-actions on ${\bf T}^n$. One can also
perform  conjugate gluings along the periodic
orbit to obtain an $\SL(n,\BZ)$-action with multiple $G$-disks which
are permuted by the action.

{\bf Connecting points of a finite orbit by a $G$-tube. }  Given a
finite orbit $O$ as above, pointwise fixed by a finite-index
subgroup $\Gamma < \SL(n,\BZ)$, gluing $G$-tubes between some
pairs of distinct points of $O$ by the procedure above yields further
$\Gamma$-actions.

{\bf Combinations. }  Given a finite collection of tori $T_1, \ldots,
T_k$, each with finite orbits $O_i$, for $i = 1,
\ldots, k$, let $\Gamma <
\SL(n,\BZ)$
be a finite-index subgroup pointwise fixing $O = \cup_i O_i$.  Combinations of $G$-tubes and two-sided blow-ups between distinct points of $O$ and $G$-disks or blow-ups
at points of $O$ yield closed, connected $n$-manifolds with smooth $\Gamma$-action.


We refer to any action of a finite-index subgroup of $\SL(n, \BZ)$
constructed by finite iteration of the operations
described above as an \emph{action built from tori, $G$-disks, $G$-tubes,
blow-ups and two-sided blow-ups}.
Finite iterations of a subset of these operations yields a subset of these actions; for example, the actions in Theorem \ref{thm:ourlatticetheorem} are actions
  built from tori and $G$-disks.

\begin{conjec}
  \label{conj:gamma.actions}
Let $\Gamma < \SL(n,\BR)$ be a lattice and $M$ a compact manifold of dimension $n$.
Then any action $\rho:\Gamma \rightarrow \Diff(M)$ either
\begin{enumerate}
  \item extends to an action of $\SL(n,\BR)$ or $\widetilde{\SL(n,\BR)}$;
  \item factors through a finite quotient of $\Gamma$; or
  \item is an action built
from tori, $G$-tubes, $G$-disks, blow-ups and two-sided blow-ups, with $\Gamma$ a
finite-index subgroup of $\SL(n,\BZ)$
\end{enumerate}
\end{conjec}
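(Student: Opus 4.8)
The plan is to establish Conjecture~\ref{conj:gamma.actions} by marrying the higher-rank rigidity machinery of the Zimmer program to the local structure theory developed above for the ambient group $G=\SL(n,\BR)$. Fix a lattice $\Gamma<\SL(n,\BR)$ with $n\geq 3$ and a smooth action $\rho\co\Gamma\to\Diff(M)$ on a closed $n$-manifold. First I would dispose of the degenerate case: if $\rho(\Gamma)$ is finite we are immediately in alternative~(2), so assume henceforth that the image is infinite. The organizing dichotomy is whether or not $\rho$ admits a finite orbit, the two branches corresponding respectively to the extendable homogeneous actions of alternative~(1) and to the arithmetic, surgered actions of alternative~(3).

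On the branch where $\rho$ has \emph{no} finite orbit, the goal is to prove that $\rho$ extends to $\SL(n,\BR)$ or $\widetilde{\SL(n,\BR)}$, landing in alternative~(1). Here I would pass to the suspension $X=(\SL(n,\BR)\times M)/\Gamma$ with its induced $\SL(n,\BR)$-flow and apply Zimmer cocycle superrigidity to the derivative cocycle, as in \cite{bfh.zimmer.conj}. In the top dimension $n=\alpha(G)+1$ the only admissible superrigid target datum is the standard representation, so the Lyapunov and coarse dynamical structure is forced to match that of a homogeneous $G$-space; combined with the measure-rigidity and regularity input underlying \cite{bfh.zimmer.conj,bfh.slmz}, this should identify the $\Gamma$-action with the restriction of one of the $\SL(n,\BR)$-actions classified in Theorems~\ref{thm:no_fixed_points} and~\ref{thm:with_fixed_points}, yielding the desired extension.

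On the branch where $\rho$ \emph{does} have a finite orbit, let $p$ be a point fixed by a finite-index subgroup $\Gamma'<\Gamma$. The isotropy representation $d_p\rho\co\Gamma'\to\GL(T_pM)\cong\GL(n,\BR)$ is the key input: by Margulis superrigidity such a representation is either precompact or extends virtually to $\SL(n,\BR)$, and since $\dim T_pM=n$ the extending case must be the standard representation up to the outer automorphism, exactly as for the linear models $\rho,\rho^*$ recalled after Theorem~\ref{thm:linearization}. This rigidity, together with the invariant integral lattice coming from the emerging torus structure, forces $\Gamma'$, and hence $\Gamma$, to be up to finite index and conjugacy a subgroup of $\SL(n,\BZ)$, supplying the arithmeticity demanded in alternative~(3), and it identifies the germ of $\rho$ at $p$ with a (possibly nonlinear) $\SL(n,\BZ)$-germ on $(\BR^n,0)$ with standard linear part. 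I would then classify such germs: linearization fails for lattices—the Katok--Lewis examples \cite{katok.lewis.blowup} are genuinely nonlinearizable—but the obstruction is carried entirely by the $Q^0$-orbit data along an invariant ray $\ell_0$, which is precisely the vector-field datum parametrizing the blocks $B_X$, $BS_X$, $G$-disks, and $G$-tubes of Section~\ref{subsec:lattice.actions}. Cutting $M$ along the resulting invariant hypersurfaces and rigidifying each piece via Proposition~\ref{prop:G.disks.distinct} should express $\rho$ as an action built from tori and these blocks, giving alternative~(3).

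The hard part will be this second branch, and specifically the passage from the correct \emph{infinitesimal} model at a fixed point to a global \emph{normal form} for the germ of the action. Unlike the ambient-group situation, where Theorem~\ref{thm:linearization} supplies an honest $C^k$ linearization, a lattice carries no such statement; one must instead prove from scratch a classification of smooth $\Gamma$-germs on $(\BR^n,0)$ with standard linear part, showing that every such germ arises from the linear model by the blow-up, disk, and tube surgeries and by nothing else. I expect this to require genuinely new local measure-rigidity and regularity input, along the lines of the announced Brown--Rodriguez-Hertz--Wang program in dimension $n-1$, together with control of the global combinatorics of how the invariant hypersurfaces decompose $M$ into standard pieces. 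The volume-preserving refinement (Conjecture~\ref{conj:gamma.actions.volume}) should then follow from the same analysis once the normal weight is pinned to $n$ as in \cite{katok.lewis.blowup}.
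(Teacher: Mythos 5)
The first thing to say is that the statement you are proving is labelled a \emph{Conjecture} in the paper, and the paper offers no proof of it; the authors explicitly remark that it ``is less established'' and is supported only by the constructions of Section \ref{subsec:lattice.actions} and by Theorems \ref{thm:no_fixed_points} and \ref{thm:with_fixed_points}, which classify only the actions that \emph{do} extend to $\SL(n,\BR)$. So there is no proof in the paper to compare yours against, and what you have written is a research program rather than an argument. That said, your roadmap is broadly the one experts would sketch, and it is worth recording where the genuine gaps are --- several of which you partially acknowledge, but which deserve to be named more sharply.

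First, on the no-finite-orbit branch: cocycle superrigidity applied to the suspension controls the derivative cocycle only up to a measurable change of framing and up to compact errors; passing from that measurable, infinitesimal information to a smooth conjugacy with one of the actions of Theorems \ref{thm:no_fixed_points} and \ref{thm:with_fixed_points} is precisely the open core of the Zimmer program in this dimension. Even the analogous statement one dimension lower is, as the paper notes, only \emph{announced} by Brown--Rodriguez-Hertz--Wang, and the dimension-$n$ case is strictly harder (the orbit structure is richer, cf.\ Theorem \ref{thm:orbits}). Second, on the finite-orbit branch, Margulis superrigidity applied to the isotropy representation $d_p\rho$ at a $\Gamma'$-fixed point $p$ does \emph{not} force $\Gamma$ to be commensurable to $\SL(n,\BZ)$: a cocompact lattice also has, a priori, an $n$-dimensional isotropy representation with standard image, and ruling out such actions requires producing a global invariant (an invariant affine structure, or an invariant lattice in a suitable cover) that you assume rather than derive --- your phrase ``the invariant integral lattice coming from the emerging torus structure'' is circular at this point. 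Third, the local classification of $\Gamma$-germs at a fixed point with standard linear part cannot be reduced to ``$Q^0$-orbit data along an invariant ray'': for a lattice there is no one-parameter subgroup $\{a^t\}$ acting, only the countable group $\Gamma\cap Q$, so the vector field $X$ that parametrizes $G$-disks and $G$-tubes is not available, and Proposition \ref{prop:G.disks.distinct} only distinguishes the \emph{constructed} actions from one another --- it does not show that every germ arises from the construction. Finally, even granting the local normal forms, nothing in your outline shows that the complement of the surgered pieces is a standard torus (or a union of such); the global combinatorics of the decomposition is an independent open problem. In short: this is a sensible statement of intent, but each of the three alternatives of the conjecture hides an unproved rigidity theorem, and the paper itself treats all of them as open.
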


Actions as in item $(1)$ are classified by Theorems
\ref{thm:no_fixed_points} and \ref{thm:with_fixed_points} below, so this conjecture amounts to a full description of $\Gamma$-actions in dimension
$n$.

We can formulate a much more restrictive conjecture for $\Gamma$-actions preserving a finite volume, thanks to the following proposition.

\begin{prop}
  \label{prop:vol.preserving}
Let $\Gamma \leq \SL(n,\BZ)$ be a finite-index subgroup acting on a closed manifold $M^n$ preserving a finite volume.  Then the $\Gamma$-action does not extend to $\SL(n,\BR)$ or $\widetilde{\SL(n,\BR)}$. If it is built from tori, $G$-disks, $G$-tubes, blow-ups, and two-sided blow-ups, then it is in fact built only from volume-preserving blow-ups and two-sided blow-ups.
  \end{prop}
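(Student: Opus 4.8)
The plan is to prove the two assertions of Proposition \ref{prop:vol.preserving} separately, exploiting the structure of the $G$-actions built in Section \ref{subsec:Gactions} together with the dynamics of the center $\{a^t\}$ of the Levi subgroup $L \cong \GL(n-1,\BR)$.

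For the first assertion, I would argue by contradiction: suppose a finite-index $\Gamma \le \SL(n,\BZ)$ preserves a finite volume on $M^n$ and the action extends to $\SL(n,\BR)$ or its universal cover. By Theorems \ref{thm:no_fixed_points} and \ref{thm:with_fixed_points}, any such extended action is one of the non-transitive actions of Constructions I and II (or an exceptional transitive action in dimension $3$ or $4$). The key point is that none of these $G$-actions preserves a finite volume. For the actions with $G$-fixed points (Construction II on $\mathbf{S}^n$ or $\mathbf{RP}^n$), the existence of a $G$-fixed point $x$ where the isotropy representation is the nontrivial irreducible $\SL(n,\BR)$-representation on $\BR^n$ immediately obstructs an invariant volume: the one-parameter subgroup $\{a^t\}$ acts on $T_xM \cong \BR^n$ with determinant tending to $0$ or $\infty$, so no $G$-invariant measure can be finite and positive near $x$. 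For the fixed-point-free actions of Construction I (circle bundles over $\mathbf{S}^{n-1}$ or $\mathbf{RP}^{n-1}$), the induced structure $G\times_Q\Sigma$ again carries the $\{a^t\}$-flow contracting the $\BR^{n-1}$-directions of the parabolic, so by the Borel density theorem or a direct Poincar\'e-recurrence argument the flow of the unipotent radical $\BR^{n-1} \lhd Q$ precludes a finite invariant measure. I would phrase this uniformly: the restriction of the $G$-action to the opposite horospherical subgroups exhibits contraction, and a $G$-invariant finite measure would be invariant under a subgroup whose dynamics forces it to be supported on the fixed set, which has measure zero for a nontrivial volume. Since $\Gamma$ is Zariski-dense in $\SL(n,\BR)$ (being finite-index in $\SL(n,\BZ)$, a lattice), a $\Gamma$-invariant finite volume would be $\SL(n,\BR)$-invariant once the action extends, a contradiction.

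For the second assertion, I would analyze the effect of each building block on an invariant volume, localizing near the exceptional divisors and the glued-in $G$-disks and $G$-tubes. A $G$-disk $D = G\times_{Q^0}(0,1]$ glued in at a fixed point contains, in its interior, the homogeneous piece $G\times_{Q^0}(z,1]$ on which the $\{a^t\}$-flow along the defining vector field $X$ is conjugate to the linear contraction; since $X$ is nonvanishing there and the $\BR^{n-1}$-directions are contracted nontrivially, no finite $G$-invariant---hence no $\Gamma$-invariant once the image of $\Gamma\cap\hat Q^0$ is dense in $\BR$ as in Proposition \ref{prop:G.disks.distinct}---volume can survive. The same reasoning rules out $G$-tubes, whose defining vector field is nonvanishing on the interval interior. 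By contrast, a blow-up or two-sided blow-up is determined by a vector field $X$ vanishing only at $0$ on $\ell_{BS}$, and the invariant hypersphere $E \cong \mathbf{RP}^{n-1}$ (or $\mathbf{S}^{n-1}$) is a codimension-one locus rather than an open region swept by contraction; here the derivative of $X$ at $0$ is the free weight, and Katok--Lewis' computation shows the value $n$ makes the action volume-preserving. Thus I would conclude that a volume-preserving action built from the listed blocks can contain neither $G$-disks nor $G$-tubes, leaving only blow-ups and two-sided blow-ups, and among those only the weight-$n$ normalization.

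The main obstacle I anticipate is making the local contraction argument genuinely rule out \emph{all} finite $\Gamma$-invariant volumes rather than only $G$-invariant ones. The $\Gamma$-action on a $G$-disk or $G$-tube is not literally the restriction of a $G$-action, so I cannot directly invoke $\SL(n,\BR)$-invariance; instead I must use that $\Gamma \cap \hat Q^0$ has dense image in $\BR$ under $\hat\nu_0$ (Prasad--Rapinchuk, via a $\BQ$-irreducible torus, exactly as in the proof of Proposition \ref{prop:G.disks.distinct}) to show the closure of the $\Gamma$-action on the relevant invariant interval contains the full contracting flow $\{a^t\}$. Once this density is in hand, a $\Gamma$-invariant finite measure is automatically invariant under the closure, and the standard contraction/recurrence dichotomy applies. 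Verifying that the weight-$n$ condition is the \emph{only} volume-preserving normalization for the blow-ups will require the explicit Jacobian computation of Katok--Lewis, which I would cite rather than reproduce.
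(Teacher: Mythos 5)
Your overall strategy---rule out each building block by exhibiting an open region on which no finite invariant volume can live, and reduce the blow-up case to the Katok--Lewis weight computation---is the right shape, and it matches the paper's proof in outline. But two of your key steps are broken. First, the claimed local obstruction at a $G$-fixed point is false: the isotropy representation there is $\rho$ or $\rho^*$, both of which land in $\SL(n,\BR)$, so $\{a^t\}$ acts on $T_xM$ with determinant identically $1$ (eigenvalue $e^t$ on $\ell_0$ balanced by $e^{-t/(n-1)}$ on the complement). In the linearizing chart, Lebesgue measure is a locally finite, positive, invariant volume near $x$, so nothing is obstructed locally at the fixed point. The genuine obstruction is global and lives on the open $n$-dimensional orbits: such an orbit is equivariantly $G/E^0 \cong \BR^n \setminus \{0\}$, whose unique $G$-invariant Radon measure up to scale is Lebesgue, which has infinite total mass.

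Second, and more seriously, your mechanism for upgrading from ``no finite $G$-invariant volume'' to ``no finite $\Gamma$-invariant volume'' does not work. Zariski density of $\Gamma$ does not make a $\Gamma$-invariant measure $G$-invariant---the stabilizer of a measure is closed in the Hausdorff topology of $G$, and the discrete group $\Gamma$ is already closed, so nothing forces it to grow. Your fallback via Prasad--Rapinchuk also fails: elements of $\Gamma \cap \hat Q^0$ with $\hat\nu_0$-image near a given $t_0$ have unbounded $\hat E^0$-components, so they do not converge to $a^{t_0}$ in $G$ or in $\Diff(M)$; they only approximate the flow on the one-dimensional invariant interval $\hat I$, which has measure zero for any volume. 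The paper bypasses all of this with one tool you are missing: the Howe--Moore/Moore ergodicity theorem. Since $E^0$ is noncompact, the lattice $\Gamma$ acts ergodically on $G/E^0 \cong \BR^n\setminus\{0\}$ with respect to the Lebesgue measure class, so any $\Gamma$-invariant volume form $f\lambda$ there has $f$ constant and hence infinite total volume. Every excluded action---an extension to $\SL(n,\BR)$ or $\widetilde{\SL(n,\BR)}$, or a construction containing a $G$-disk or $G$-tube---contains an open set on which the $\Gamma$-action is the standard one on $\BR^n\setminus\{0\}$ (and the exceptional homogeneous spaces of Theorem \ref{thm:orbits}(4) are handled by the same theorem), which finishes the proof in a few lines. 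Your citation of Katok--Lewis for the weight-$n$ normalization of the surviving blow-ups agrees with the paper.
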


Recall that the volume-preserving blow-ups and two-sided blow-ups are those with definiing vector field $X$ having derivative $n$ at $0$.

\begin{proof}
  Let $G \cong \SL(n,\BR)$.
For any lattice $\Gamma <G$, the $\Gamma$-action on $\BR^n \backslash \{0\}$ is ergodic by the Howe-Moore theorem (see \cite[Thm 2.2.6]{zimmer.etsg}).
Any volume form on $\BR^n \backslash \{0\}$ is $f \lambda$, for $\lambda$ the $G$-invariant Lebesgue measure and $f$ a smooth function. Thus the only $\Gamma$-invariant volume forms on $\BR^n \backslash \{0\}$ are constant multiples of $\lambda$, all
having infinite total volume.

Now let $\Gamma$ act on $M$ as in the statement of the proposition.  If the action is volume-preserving, there are no open sets on which the $\Gamma$-action is conjugate to the standard action on $\BR^n \backslash \{0\}$.  Any action extending to a non-transitive action of $\SL(n,\BR)$ or $\widetilde{\SL(n,\BR)}$, or any action containing $G$-tubes or $G$-disks, always contain such open sets.  The exceptional homogeneous spaces of Theorem \ref{thm:orbits} (4) do not have any $\Gamma$-invariant finite volume, also by the Howe-Moore Theorem.
\end{proof}

Here is the resulting conjecture for volume-preserving $\Gamma$-actions on closed $n$-manifolds:

\begin{conjec}
  \label{conj:gamma.actions.volume}
Let $\Gamma < \SL(n,\BR)$ be a lattice and $M$ a compact manifold of dimension $n$.
Then any action $\rho:\Gamma \rightarrow \Diff(M)$ either
\begin{enumerate}
  \item factors through a finite quotient of $\Gamma$; or
  \item $\Gamma \leq \SL(n,\BZ)$ is a
finite-index subgroup, and the action is built from tori and volume-preserving blow-ups and two-sided blow ups---that is, for which all vector
fields in the construction have derivative $n$ at $0$.
\end{enumerate}

\end{conjec}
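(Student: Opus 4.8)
The plan is to reduce both assertions to two rigidity facts about a lattice $\Gamma$ in the higher-rank group $G \cong \SL(n,\BR)$, each a standard consequence of the theory in \cite{zimmer.etsg}. First, by Moore's ergodicity theorem (itself a consequence of Howe--Moore), $\Gamma$ acts ergodically on $\BR^n \backslash \{0\} = G/E^0$, since $E^0$ is noncompact; as every volume form there is $f\lambda$ with $\lambda$ the $G$-invariant Lebesgue measure, the only $\Gamma$-invariant volume forms are constant multiples of $\lambda$, all of infinite total mass, so there is no finite one. Second, by Furstenberg's boundary theory, on any of the projective boundaries ${\bf RP}^{n-1} = G/Q$, ${\bf S}^{n-1} = G/Q^0$, or the exceptional flag varieties of Theorem \ref{thm:orbits}(4), the lattice $\Gamma$ admits no invariant probability measure: such a measure would be stationary for an admissible random walk, hence equal to the unique ($K$-invariant) stationary measure, which is not $\Gamma$-invariant. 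I will show that every configuration forbidden by the proposition forces the presence of one of these two homogeneous models.

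For the first assertion, suppose the $\Gamma$-action extends to an action of $\hat G \in \{\SL(n,\BR), \widetilde{\SL(n,\BR)}\}$ on $M$. By Theorem \ref{thm:orbits}, each $\hat G$-orbit is a point, a projective space ${\bf S}^{n-1}$ or ${\bf RP}^{n-1}$, an open orbit of type (3), or an exceptional flag variety of type (4). If some orbit is of type (3), it is open in $M$, and I restrict the hypothetical finite invariant volume to it, contradicting the first fact. If some orbit is of type (4), then, being closed and $n$-dimensional in the connected $M$, it is all of $M$, so $M = G/P$ and the second fact applies directly. In the remaining case every orbit has dimension at most $n-1$; then, as in the classification of Construction~I actions (Theorem \ref{thm:no_fixed_points}), $M$ is an $\hat G$-equivariant circle bundle over $G/Q \cong {\bf RP}^{n-1}$ or $G/Q^0 \cong {\bf S}^{n-1}$ (for example ${\bf RP}^{n-1} \times {\bf S}^1$ when the defining field on $\Sigma$ vanishes identically), and pushing the finite invariant volume forward along the bundle yields a finite $\Gamma$-invariant measure on the base, contradicting the second fact. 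In every case the assumed finite invariant volume is impossible, so no extension exists.

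For the second assertion, assume the action is built from the listed blocks and preserves a finite volume. A $G$-tube or $G$-disk contains in its interior an open $G$-orbit of type \ref{thm:orbits}(3); this is an open $\Gamma$-invariant subset of $M$ on which $\Gamma$ acts by the standard linear action on $\BR^n \backslash \{0\}$, so restricting the finite volume there again contradicts the first fact. Hence no $G$-tubes or $G$-disks occur, and $M$ is assembled only from tori, blow-ups, and two-sided blow-ups. It remains to pin down the defining fields. Away from its exceptional divisor $E$, a blow-up is $\Gamma$-equivariantly diffeomorphic, via the radial reparametrization of the construction, to the standard ${\bf T}^n \backslash \{\pi(0)\}$; since a finite-index $\Gamma < \SL(n,\BZ)$ still has infinite orbits on $\BZ^n \backslash \{0\}$ and so acts ergodically on ${\bf T}^n$, the invariant volume is forced to be a constant multiple of Lebesgue on ${\bf T}^n$. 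Transporting $\rho^{n-1}\,d\rho$ back through the radial change of coordinates $\rho = s^{1/d}$ that intertwines the scaling flow with the flow of the defining field $X$, where $d = D_0 X$, produces a multiple of $s^{\,n/d-1}\,ds$ near $E$. For this to extend across $E$ as a smooth, nondegenerate volume form on $M$, the exponent must vanish, that is $d = n$; the identical computation on each side handles the two-sided case. Thus every blow-up and two-sided blow-up appearing has defining field with derivative $n$ at $0$.

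The main obstacle is the transport computation near $E$ together with the point that ``preserving a finite volume'' must be read as admitting a smooth nondegenerate invariant volume form: a mere finite invariant measure would survive for every $d$, since $\int_0 s^{\,n/d-1}\,ds$ converges, so one must argue that ergodicity pins the invariant density to the specific power $s^{\,n/d-1}$ of the smooth coordinate and that smoothness and nonvanishing at $E$ then select $d = n$. A secondary subtlety, easy to overlook, is that the boundary cases in the first assertion genuinely require Furstenberg's theorem rather than Howe--Moore alone, since $G/Q$, $G/Q^0$, and the exceptional $G/P$ carry no invariant measure against which an ergodicity argument could be run.
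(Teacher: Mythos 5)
There is a fundamental gap, and it is not a fixable detail: the statement you are proving is a \emph{conjecture} in the paper, not a theorem, and your argument does not actually address the part that makes it conjectural. Both halves of your proof begin by \emph{assuming} that the action already has one of the forms in Conjecture \ref{conj:gamma.actions}: in the first half you assume the action extends to $\SL(n,\BR)$ or $\widetilde{\SL(n,\BR)}$, and in the second half you assume it is ``built from the listed blocks.'' But an arbitrary volume-preserving action $\rho\co\Gamma \rightarrow \Diff(M)$ comes with no such structure: there is no ambient $G$-action, no $G$-orbits, no exceptional divisors, and no radial coordinates to transport anything through. Ruling out (or structuring) such a priori formless actions is precisely the open content of Conjectures \ref{conj:gamma.actions} and \ref{conj:gamma.actions.volume}; no current technique (cocycle superrigidity, the Brown--Fisher--Hurtado dimension bounds, etc.) yields this classification in dimension $n$, which is why the paper states it as a conjecture rather than proving it.

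What your argument does establish is essentially the paper's Proposition \ref{prop:vol.preserving}: \emph{conditional} on the action extending to $G$ or being built from tori, $G$-disks, $G$-tubes, blow-ups, and two-sided blow-ups, finite-volume invariance kills the extensions, the $G$-disks, and the $G$-tubes, and pins the blow-up weights to $n$. Your route there is close to the paper's (Howe--Moore ergodicity of $\Gamma$ on $\BR^n \backslash \{0\}$ forcing any invariant volume form to be a multiple of the infinite-mass Lebesgue form), with two additions the paper handles differently or more briefly: the paper disposes of the exceptional homogeneous spaces of Theorem \ref{thm:orbits}(4) again by Howe--Moore (nonexistence of finite invariant measure on $G/P$ for $P$ noncompact, restricted to $\Gamma$) rather than by Furstenberg boundary theory, and it quotes the Katok--Lewis computation for the weight-$n$ condition rather than redoing the $s^{\,n/d-1}\,ds$ transport. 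Those portions of your writeup are reasonable, but they amount to evidence for the conjecture, not a proof of it; to be honest about the logical status you would need to state your result as ``Conjecture \ref{conj:gamma.actions} implies Conjecture \ref{conj:gamma.actions.volume},'' which is exactly how the paper presents the relationship.
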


\noindent While this version of the conjecture does not appear anywhere in the literature,
it seems to be widely believed by experts.  The more general Conjecture \ref{conj:gamma.actions}
is less established, mainly because the examples involving $G$-tubes and $G$-disks
were previously unkown.


\section{Classification of smooth $G$-actions}

Let $G$ be connected and locally isomorphic to $\SL(n,\BR)$.
In this section we prove that, aside from the
exceptional homogeneous spaces listed in Theorem \ref{thm:orbits}, all
nontrivial $G$-actions on closed $n$-manifolds are obtained from
constructions I or II from Section \ref{subsec:Gactions}.

It follows from Theorem \ref{thm:orbits} that a nontrivial
non-transitive action of $G$ is a faithful action of $\SL(n,\BR)$ or
$\mbox{PSL}(n,\BR)$.  We set $G = \SL(n,\BR)$ for the remainder of
this section and assume it acts nontrivially and non-transitively on a compact manifold $M$.

\subsection{Compact subgroups and fixed circle}

Let the subgroups $Q$, $L$, and $C$ be as in previous sections,
and let $C^0 \cong \SO(n-1)$.  As above, let $K$ be a maximal compact subgroup of $G$, containing $C$, and $\kappa$ a $K$-invariant metric on $M$.

\begin{prop}
  \label{prop:C0_fixed}
  Assume the $G$-action on $M$ is not transitive.
Let $\Sigma \subset M$ comprise the $C^0$-fixed points.  It is a nonempty, finite union of circles.
\end{prop}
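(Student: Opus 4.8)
The plan is to analyze the fixed-point set $\Sigma = \Fix(C^0)$ by combining the linearity of the $K$-action near fixed points with the orbit classification of Theorem \ref{thm:orbits}. First I would establish that $\Sigma$ is nonempty and a smooth submanifold. Since $\Sigma$ is the fixed set of the compact group $C^0 \cong \SO(n-1)$ acting isometrically with respect to $\kappa$, it is automatically a closed, smoothly embedded submanifold of $M$ (each component is totally geodesic, being the fixed set of a group of isometries). To see $\Sigma \neq \es$, I would exploit the orbit structure: pick any point $x \in M$ and examine the orbit $G.x$, which by Theorem \ref{thm:orbits} is one of a short list of homogeneous spaces. In each case one can locate a point whose stabilizer contains (a conjugate of) $C^0$, equivalently a point fixed by $C^0$. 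The cleanest route is to note that in every nontrivial orbit type, $C^0 \cong \SO(n-1)$ has a fixed point: on $\mathbf{S}^{n-1}$ or $\mathbf{RP}^{n-1}$ the poles are $\SO(n-1)$-fixed; on the linear-type orbits $\BR^n \ssm \{0\}$ or its quotients the coordinate axis fixed by $\SO(n-1)$ gives fixed points; and the exceptional orbits in dimension $3,4$ can be checked directly.

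Next I would pin down the \emph{dimension} of each component of $\Sigma$, which is the crux of the argument. Near a $C^0$-fixed point $p$, the $K$-action linearizes via $\exp_\kappa$, so the local structure of $\Sigma$ near $p$ is the fixed subspace of $C^0$ acting linearly on $T_pM \cong \BR^n$. The key representation-theoretic input is that $K$ acts on $T_pM$ through an isogeny to $\SO(n)$ via the \emph{standard} $n$-dimensional representation (this is forced by Proposition \ref{prop:fixed_discrete} and the orbit analysis, which show the isotropy is irreducible and factors through $\SL(n,\BR)$ when nontrivial; for non-fixed $p$ one analyzes the slice representation transverse to the orbit). Under the standard representation of $\SO(n)$ restricted to $\SO(n-1)$, the fixed subspace is exactly one-dimensional—the axis orthogonal to the invariant hyperplane. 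Hence near each of its points $\Sigma$ is locally a $1$-manifold, so $\Sigma$ is a closed $1$-manifold, i.e.\ a finite disjoint union of circles (compactness of $M$ forbidding any noncompact components).

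The main obstacle I anticipate is handling the fixed-point computation \emph{uniformly} across the two qualitatively different situations: $C^0$-fixed points that are also $G$-fixed points (where the isotropy representation is the full standard $\SO(n)$-module, treated essentially as above) versus $C^0$-fixed points lying on lower-dimensional or $n$-dimensional $G$-orbits (where one must use the slice theorem and verify that the $C^0$-action on the normal slice, together with its action tangent to the orbit, still has a one-dimensional fixed set). I would organize this by first using Theorem \ref{thm:orbits} to enumerate orbit types, then for each type computing $\dim \Fix(C^0 \text{ on } T_pM)$ as the sum of the fixed dimensions along the orbit direction and the normal direction. The delicate point is ensuring no component of $\Sigma$ has dimension $0$ (isolated points) or dimension $\geq 2$; the former is excluded because $\SO(n-1)$ genuinely fixes a line (not just the origin) in the standard $\SO(n)$-module, and the latter because the standard representation has a \emph{one}-dimensional $\SO(n-1)$-fixed space, with higher-dimensional fixed spaces arising only for the trivial or non-standard isotropy types ruled out by the preceding results. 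Finally, finiteness of the number of circles follows from compactness of $M$ together with the fact that $\Sigma$, being a closed submanifold, has finitely many connected components.
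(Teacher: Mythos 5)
Your proposal follows essentially the same route as the paper's proof: $\Sigma$ is a closed, totally geodesic submanifold because $C^0$ acts by $\kappa$-isometries; nonemptiness comes from locating $C^0$-fixed points on the orbits of Theorem \ref{thm:orbits}; and the dimension count is done pointwise by splitting $T_pM$ into orbit and slice directions and checking that the $C^0$-fixed subspace is one-dimensional in each orbit type (standard representation at a $G$-fixed point, irreducible tangent action plus trivial normal line on an $(n-1)$-dimensional orbit, the known fixed axis on an open orbit). One correction, though: your claim that the exceptional orbits $\mathcal{F}_{1,2}^3$ and $\Gr(2,4)$ ``can be checked directly'' to contain $C^0$-fixed points is false. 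For $n=3$, $\SO(2)$ rotating about an axis in $\BR^3$ fixes no complete flag, since its unique invariant line (the axis) does not lie in its unique invariant plane (the orthogonal plane); for $n=4$, the standard $\SO(3) \subset \SO(4)$ preserves no $2$-plane in $\BR^4$, as $\BR^4$ decomposes as an irreducible $3$-dimensional summand plus a trivial line. The paper instead disposes of these orbits by noting that they are ruled out by the non-transitivity hypothesis (they are $n$-dimensional and closed, hence open and closed). Since your nonemptiness argument only needs a fixed point on some orbit of types (1)--(3), the error is harmless to the conclusion, but those orbits must be excluded rather than checked.
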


\begin{proof}
  The $G$-orbits in Theorem \ref{thm:orbits} except those in (4) contain $C^0$-fixed points, and those in (4) are ruled out by our hypotheses.  Thus $\Sigma \neq \emptyset$.

By classical results, $\Sigma$ is a closed, totally geodesic
submanifold for $\kappa$.  It remains to verify that each connected
component has dimension $1$.  Let $x \in \Sigma$ and refer to Theorem
\ref{thm:orbits}.  If $x$ is a $G$-fixed point, then the $K$-action is
linearizable near $x$.  The isotropy representation of $K$ extends to $G$; it is the standard representation of $K$ on
$\BR^n$, in which the $C^0$-fixed set has dimension $1$.  If $\mathcal{O}_x$
has dimension $n-1$, then $C^0$ is irreducible on $T_x \mathcal{O}_x$
and trivial on the $\kappa$-orthogonal.  Then $\Sigma$ coincides with
the $\kappa$-geodesic orthogonal to $\mathcal{O}_x$ in a neighborhood
of $x$.  In the case $\mathcal{O}_x$ has dimension $n$, then by (3) of
Theorem \ref{thm:orbits}, the fixed set of $C^0$ in $\mathcal{O}_x$
is, as in the linear action on $\BR^n \backslash \{ {\bf 0} \}$, of dimension one.
\end{proof}

\subsection{Classification in the absence of $G$-fixed points}
\label{subsec:no_fixed_points}


\begin{thm}
  \label{thm:no_fixed_points}
  Let $G \cong \SL(n,\BR)$, acting non-trivially on a closed $n$-manifold $M$.  Assume that the $G$-action is not transitive and has no global fixed points.  Then the $G$-action on $M$ is as in Construction I---that is,
  $$ M = G \times_Q \Sigma$$
  where $Q$ acts via $\mu_{(X,\tau)}$, yielding one of the following:
    \begin{enumerate}
    \item 
      $M$ is
  diffeomorphic to ${\bf S}^{n-1} \times {\bf S}^1$, with faithful,
  fiber-preserving $G$-action.

\item 
  $M$ is diffeomorphic to ${\bf RP}^{n-1} \times {\bf S}^1$, with fiber-preserving action factoring through $\PSL(n,\BR)$.

\item 
  $M$ is a flat circle bundle with ${\bf Z}_2$ monodromy over ${\bf RP}^{n-1}$, with faithful $G$-action.

  \item 
    $M$ is diffeomorphic to the blow-up of ${\bf RP}^n$ at a point.  The
    $G$-action is faithful, leaves invariant the exceptional divisor and another
    hypersurface diffeomorphic to
    ${\bf RP}^{n-1}$, and preserves an ${\bf S}^{n-1}$-fibration
    on the complement of these two.
   \end{enumerate}
\end{thm}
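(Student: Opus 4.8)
The plan is to reconstruct the action as Construction~I by exhibiting $M$ as the space induced from the $Q$-action on the circle $\Sigma = \Fix(C^0)$, and then to read off the four cases from the vector field $X$ and involution $\tau$ produced along the way.

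First I would restrict the possible orbit types. By Theorem~\ref{thm:orbits} every orbit is of type (1)--(4); the hypothesis excludes $G$-fixed points (type (1)), and the exceptional orbits of type (4) are $n$-dimensional and compact, hence open and closed in the connected manifold $M$, so their occurrence would force transitivity, contrary to hypothesis. Thus every orbit is either an $(n-1)$-dimensional orbit ${\bf S}^{n-1}$ or ${\bf RP}^{n-1}$ (type (2)) or an open $n$-dimensional orbit $\BR^n\setminus\{0\}$ or a scalar quotient thereof (type (3)). Next I would analyze $\Sigma = \Fix(C^0)$, which by Proposition~\ref{prop:C0_fixed} is a nonempty finite union of circles. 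I would check orbit-by-orbit that $\Fix(C^0)$ coincides with $\Fix(E^0)$: on each orbit both sets are exactly the locus of the $Q$-invariant line $\ell$ (two points on ${\bf S}^{n-1}$, one on ${\bf RP}^{n-1}$, and $\ell\setminus\{0\}$ or its scalar quotient on the open orbits), using that $E^0$ pointwise fixes $\ell$ and that $C^0\cong\SO(n-1)$ has no nonzero fixed vector in $\BR^{n-1}$ for $n\geq 3$. Since $E^0\lhd Q$, this identification makes $\Sigma$ a $Q$-invariant set; since $E^0$ fixes $\Sigma$ pointwise, the $Q$-action on $\Sigma$ factors through $Q/E^0\cong\BR^*\cong\BZ_2\times\BR$, where the $\BR$-factor acts by the flow $\psi^t_X$ of $\{a^t\}$ and the $\BZ_2$-factor by the involution $\tau$ coming from $\sigma$. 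This is precisely the action $\mu_{(X,\tau)}$ of Construction~I.

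I would then form the $G$-equivariant map $\Phi\colon G\times_Q\Sigma\to M$, $[(g,s)]\mapsto g\cdot s$. It is surjective because every orbit meets $\Sigma$, and it is injective because each $G$-orbit meets $\Sigma$ in a single $Q$-orbit: the stabilizer of any $s\in\Sigma$ is one of $Q^0,Q,E^0,E_\Lambda$ from Theorem~\ref{thm:orbits}, each contained in $Q$, so the fibre over a point is a single class. A tangent-space count shows $d\Phi$ is an isomorphism everywhere: on an open orbit the orbit directions $\fg/\fq$ together with $T_s\Sigma$ span $T_sM$ by dimension, while along a type-(2) orbit $T_s\Sigma$ is transverse to the orbit and complements its tangent space. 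Hence $\Phi$ is a local diffeomorphism; as its domain $G\times_Q\Sigma$ is compact ($G/Q={\bf RP}^{n-1}$ and $\Sigma$ are compact), it is proper, and being an injective proper local diffeomorphism it is an equivariant diffeomorphism. This realizes the action as $M=G\times_Q\Sigma$ with $Q$ acting by $\mu_{(X,\tau)}$.

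Finally I would enumerate by case analysis on $X$, $\tau$, and the number of components of $\Sigma$. If $X\equiv 0$ all orbits are of type (2) and $M$ is a circle bundle over ${\bf RP}^{n-1}$ or ${\bf S}^{n-1}$: a single circle with $\tau=\mbox{Id}$ gives ${\bf RP}^{n-1}\times{\bf S}^1$ (case (2)); two circles swapped by $\tau$ give ${\bf S}^{n-1}\times{\bf S}^1$ (case (1)); and a single circle with $\tau$ a nontrivial involution---which by Lemma~\ref{lem:involution} has exactly two fixed points---gives the flat ${\bf S}^1$-bundle with $\BZ_2$ monodromy over ${\bf RP}^{n-1}$ (case (3)). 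If $X\not\equiv 0$ there is an open orbit; counting on the connected circle $\Sigma^0$ (where the number of zeros of $X$ equals the number of arcs) together with the single-$Q$-orbit property forces exactly one open orbit, meeting $\Sigma$ in the two rays of $\ell\setminus\{0\}$, capped by two copies of ${\bf RP}^{n-1}$ (since opposite rays represent the same line and hence limit to the same point of each projective cap). This is the blow-up of ${\bf RP}^n$ at a point (case (4)). The main obstacle is the reconstruction step: verifying that $\Phi$ is a genuine diffeomorphism, i.e.\ that the induced smooth structure on $G\times_Q\Sigma$ matches that of $M$ across the transition between the open (type (3)) and singular (type (2)) strata, together with the connectivity-and-counting argument that rules out configurations with several open orbits and thereby pins the diffeomorphism type in the non-vanishing case to the blow-up of ${\bf RP}^n$.
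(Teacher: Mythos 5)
Your reconstruction of $M$ as $G\times_Q\Sigma$ follows the same strategy as the paper's proof, and your route to the $Q$-invariance of $\Sigma$ via the identification $\Fix(C^0)=\Fix(E^0)$ is a clean alternative to the paper's normalizer computation $N_K(C^0)=C$. One step is under-justified: injectivity of $\Phi$ does not follow from ``all stabilizers of points of $\Sigma$ lie in $Q$.'' What you need is that for $s,s'\in\Sigma$ in the same $G$-orbit the full transporter $\{h\in G : h.s'=s\}$ lies in $Q$, equivalently that each orbit meets $\Sigma$ in a \emph{single} $Q$-orbit; this is true, but it requires its own orbit-by-orbit verification (or, as in the paper, the Iwasawa normal form together with $N_K(C^0)=C$). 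Knowing only that point stabilizers lie in $Q$ is a priori compatible with an orbit meeting $\Sigma$ in several $Q$-orbits.

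The genuine gap is the final enumeration, which is where the four conclusions are actually derived, and which is organized around the wrong dichotomy. The cases are not distinguished by whether $X\equiv 0$: they are distinguished by the pair $(\Sigma,\tau)$, namely whether $\sigma$ swaps two components of $\Fix(C^0)$ (case (1)) or preserves the component $\Sigma^0$, and, in the latter situation, whether $\tau=\left.\sigma\right|_{\Sigma^0}$ is trivial (case (2)), acts freely (case (3)), or has two fixed points (case (4)). Each of these occurs with $X\not\equiv 0$; for instance ${\bf RP}^{n-1}\times{\bf S}^1$ with $X$ having $k\geq 2$ zeros carries $k$ open orbits, so your claim that $X\not\equiv 0$ forces a single open orbit and hence case (4) is false. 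You also assign ``single circle, $\tau$ nontrivial with two fixed points'' to case (3); that configuration is case (4) (the blow-up of ${\bf RP}^n$), while case (3) corresponds to $\tau$ acting freely---the zero-fixed-point alternative of Lemma \ref{lem:involution}, which your enumeration omits entirely. As written, the last paragraph does not establish the theorem's list.
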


\begin{proof}

  Our assumptions, together with Theorem \ref{thm:orbits}, imply that all point stabilizers are conjugate in $G$ into
$Q$.

Let $\Sigma$ be the fixed set of $C^0$, as in Proposition \ref{prop:C0_fixed}.
For $x \in \Sigma$, the stabilizer of $x$ contains $C^0$; denote this stabilizer
by $G_x$.  Let $h \in G$ be such that $h G_x h^{-1} \leq Q$; in
particular, $h C^0 h^{-1} \leq Q$.
The following homogeneous spaces are $K$-equivariantly diffeomorphic:
$$ G/Q \cong {\bf RP}^n \cong K/C$$
Now
$$ C^0 \leq \mbox{Stab}_G(hQ) \cap K = \mbox{Stab}_K(h'C)$$
for some $h' \in K$, where here the stabilizers are for the action by
translation on left-coset spaces.  Then $h' \in N_K(C^0) = C$.  It follows that
$h'C = C$ and thus $hQ = Q$.  We conclude that $G_x \leq Q$ for all $x
\in \Sigma$.

As the subgroups $E^0, E,$ and $Q^0$ are each normal in $Q$, the
stabilizer $G_x \in \{ E^0,E,Q^0,Q\}$ for all $x \in \Sigma$, using
Theorem \ref{thm:orbits}.  These stabilizers all contain $C^0$.  It
follows that $Q.\Sigma = \Sigma$. The normal subgroup $E^0$ is trivial
in restriction to $\Sigma$. Thus, the $Q$-action on $\Sigma$ factors
through the epimorphism $\nu: Q \rightarrow \BR^*$.

Let $\Sigma^0$ be a connected component of $\Sigma$.  Now $\BR^*_{>0}$
preserves $\Sigma^0$; this action is a smooth flow $\{ \psi^t_X \}$,
the restriction of $\{ a^t \}$.  Let $\sigma$ be an involution in $C$
mapping under $\nu$ to $-1$; it leaves $\Sigma$ invariant.  Let $\tau
= \left. \sigma \right|_\Sigma$.  Depending on $\tau$, let $\Sigma =
\Sigma^0 \times \{-1,1\}$ or $\Sigma = \Sigma^0$
Let $\mu_{(X,\tau)}$ be the corresponding $Q$-action on $\Sigma$.

For this $Q$-action on $\Sigma$, define the $G$-equivariant map
$$ \Phi: G \times_Q \Sigma \rightarrow M \qquad [(g,x)] \mapsto g.x$$
 The image of $\Phi$ is closed because the fiber product is compact.
Let $g.x$ be in the image of $\Phi$, with $g \in G$ and $x \in
\Sigma$.  If the orbit $G.x$ is $n$-dimensional, it is open; by
dimension comparison, the differential of $\Phi$ at $[(g,x)]$ is an
isomorphism.  If $G.x$ is $(n-1)$-dimensional, corresponding to $G_x = Q^0$ or $Q$, then $G.x$ is equivariantly diffeomorphic to ${\bf RP}^{n-1}$ or ${\bf S}^{n-1}$, and the $C^0$-fixed set in $G.x$ is
$0$-dimensional.  Thus $G.x$ is transverse to $\Sigma$ at $x$, and
the differential of $\Phi$ at $[(e,x)]$ is onto $T_{x} M$.  By
equivariance of $\Phi$, the differential at $[(g,x)]$ is also onto
$T_{g.x}M$.  We conclude that $\Phi$ is open, so it is a surjective local diffeomorphism---in this case, a covering map.

The Iwasawa Decomposition is a diffeomorphism
$$ K \times A \times N \rightarrow \SL(n,\BR)$$
where $K \cong \SO(n)$, as above, $A$ is the identity component of the diagonal subgroup, and $N$ is the group of unipotent upper-triangular matrices (see \cite[Thm VI.6.46]{knapp.lie.groups}).  As $N < E^0$ and $A/(A \cap E^0) \cong \{ a^t \}$, the Iwasawa Decomposition gives a normal form for elements of $G \times_Q \Sigma$: for any $g = k a' n \in G$ and $x \in \Sigma$,
$$ (g,x) \sim (k,a^t.x) \sim (k \sigma,\tau a^t.x)$$
where $a' = a^t a^{''}$ with $a^{''} \in A \cap E^0$.
Every $[(g,x)] \in G \times_Q \Sigma$ is represented by $(k,x)$ with $k \in K$ and $x \in \Sigma^0$.

If $\Phi([(k,x)] = \Phi([(k',x')])$ with $k,k' \in K$ and $x,x' \in
\Sigma^0$, then $k' = kq$ and $x' = q^{-1}.x$ for $q = k^{-1}k'$ in
the normalizer of $C^0$, which intersects $K$ in $C$.  Thus $[(k,x)] = [(k',x')]$.  We conclude that $\Phi$ is injective, hence a diffeomorphism.

Now suppose $\Sigma = \Sigma^0 \times \{ 1,-1 \}$.  Represent a point $p \in M$ by $[(k,x)]$ with $k \in K$ and $x \in \Sigma^0$. The assignment $p \mapsto (kC^0,x) \in K/C^0 \times \Sigma^0$ is well-defined, because the stabilizer of $\Sigma^0$ intersect $K$ equals $C^0$ in this case.  It is easy to verify that this map is a diffeomorphism $M \rightarrow {\bf S}^{n-1} \times {\bf S}^1$, corresponding to case (1).

If $\tau$ is trivial, then $M$ has a well-defined diffeomorphism to $K/C \times \Sigma^0 \cong {\bf RP}^{n-1} \times {\bf S}^1$, corresponding to case (2).

Next assume $\Sigma = \Sigma^0$ and $\tau$ acts freely.  In this case, the stabilizer in
$K$ of $\Sigma^0$ is $C$; note also that $C = \langle \sigma, C^0
\rangle$ and $\sigma$ normalizes $C^0$.  Given $p \in
M$ corresponding to $[(k,x)]$ with $k \in K$ and $x \in \Sigma^0$,
there is a well-defined map to the orbit $\{ (kC^0,x),
(k\sigma C^0,\tau.x) \} \in  (K/C^0 \times \Sigma^0)/\langle \sigma \rangle$.  This is case (3).

In the last case, when $\tau$ has two fixed points, say $x_0$ and
$x_1$, on $\Sigma^0$, then $\tau$ permutes the two components of $\Sigma^0 \backslash
\{ x_0, x_1 \}$.  Let $I_0$ be one component.  There is a well-defined map on $M \backslash ( G.x_0
\cup G.x_1)$ sending
$[(k,x)]$ to $(kC^0,x)$ with $x \in I_0$.  The image is diffeomorphic
to ${\bf S}^{n-1}
\times I_0$. The orbits $G.x_i$ are ${\bf RP}^{n-1}$. The manifold
$M$ can be obtained from ${\bf S}^{n-1} \times (\{ x_0 \} \cup I_o
\cup \{ x_1 \})$ by gluing ${\bf S}^{n-1} \times \{x_i\}$ to ${\bf
  RP}^{n-1} \times \{ x_i \}$ by the standard covering, for $i=0,1$. This is case (4).
\end{proof}

\subsection{Classification of actions with $G$-fixed points}

\begin{thm}
 \label{thm:with_fixed_points}
 Let $G \cong \SL(n,\BR)$, acting non-trivially on a closed
  $n$-manifold $M$.  Assume that the $G$-action is not transitive and
  has at least one global fixed point.  Then the $G$-action on $M$ is
  as in Construction II and has one or two fixed points.
  \begin{enumerate}
\item In the case
  of two fixed points, it is obtained from an induced action on $G \times_{Q^0}
  (-1,1)$ by attaching two copies of $\BR^n$ and is diffeomorphic to
  ${\bf S}^n$.
\item In the case of one fixed point, it is a two-fold
  quotient of an action as in (1), diffeomorphic to ${\bf RP}^n$.
\end{enumerate}
 In
  either case, the $G$-action is faithful.
  \end{thm}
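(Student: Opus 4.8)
The plan is to reduce, as in the no-fixed-point case, to the $C^0$-fixed circle $\Sigma$ and the flow on it induced by $\{a^t\}$, with the fixed points now playing the role of the capping copies of $\BR^n$ in Construction II. First I would note that $\Fix(G)$ is finite and nonempty, by Proposition \ref{prop:fixed_discrete} and the hypothesis, and that at a fixed point $p$ Theorem \ref{thm:linearization} linearizes the action; the isotropy representation on $T_pM\cong\BR^n$ is a nontrivial $n$-dimensional, hence irreducible, representation of $\SL(n,\BR)$, equal to $\rho$ or $\rho^*$, and after applying the outer automorphism $g\mapsto(g^{-1})^t$ to the whole action if necessary I may take it to be $\rho$. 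Faithfulness is then immediate: the global kernel lies in the kernel of the isotropy representation at $p$, which is $\ker\rho=\{1\}$. Moreover a punctured neighborhood of $p$ is a single open orbit $\cong\BR^n\ssm\{0\}$ with stabilizer $E^0$, and the component $\Sigma^0$ of $\Sigma$ through $p$ is, near $p$, the $\rho$-invariant line $\ell$.

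Because $\{a^t\}$ centralizes $C^0$, it preserves $\Sigma^0$ and acts there as a smooth flow $\psi^t_X$ whose zeros are exactly the points with non-open orbit. As in Proposition \ref{prop:C0_fixed}, $E^0$ fixes $\Sigma^0$ pointwise while $Q^0$ preserves $\Sigma^0$ and acts through $\nu_0$ by the flow, linearization at $p$ forcing the open-arc stabilizers to be $E^0$; thus the stabilizers occurring along $\Sigma^0$ lie in $\{G,Q,Q^0,E^0\}$. The element $\sigma\in C$ also preserves $\Sigma^0$---it normalizes $C^0$ and fixes $p$, so cannot be carried to another component---and $\tau:=\sigma|_{\Sigma^0}$ is a nontrivial involution of the circle, acting by $-1$ on $T_p\Sigma^0$. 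Since $\sigma$ lies in $Q\ssm(Q^0\cup E^0)$, the set $\Fix(\tau)$ consists precisely of the points whose stabilizer is $G$ or $Q$, i.e.\ the $G$-fixed points together with the bases of any ${\bf RP}^{n-1}$-orbits; by Lemma \ref{lem:involution} it has exactly two points. A connectedness argument---each orbit meeting $\Sigma^0$ has its whole $C^0$-fixed set, and its closure, inside the $\sigma$-invariant circle $\Sigma^0$, so that $G\cdot\Sigma^0$ is open and closed---shows $\Sigma=\Sigma^0$ and $G\cdot\Sigma^0=M$. With a fixed point present, the two points of $\Fix(\tau)$ are therefore either two fixed points or one fixed point and the base of one ${\bf RP}^{n-1}$-orbit.

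For the reconstruction, let $I\cong(-1,1)$ be one of the two open arcs into which $\Fix(\tau)$ divides $\Sigma^0$, and consider the $G$-equivariant map $\Phi\colon G\times_{Q^0}I\to M$, $[(g,s)]\mapsto g.s$. Away from the fixed points every orbit is open or is an $(n-1)$-sphere meeting $\Sigma^0$ transversally, so $\Phi$ is a local diffeomorphism; the Iwasawa normal form and the computation $N_K(C^0)=C$ from the proof of Theorem \ref{thm:no_fixed_points} show $\Phi$ is injective with image $G\cdot(\Sigma^0\ssm\Fix(\tau))=M\ssm\Fix(G)$ (the $\sigma$-translate of $I$ supplying the second arc). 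In the two-fixed-point case this identifies $M\ssm\Fix(G)$ with $G\times_{Q^0}(-1,1)$, onto whose two ends the linearizing charts glue copies of $\BR^n$ exactly as in Construction II, giving $M\cong{\bf S}^n$. In the one-fixed-point case the second point of $\Fix(\tau)$ is the base of an ${\bf RP}^{n-1}$-orbit with stabilizer $Q$, and the same analysis exhibits the action as the free $\sigma$-quotient of a two-fixed-point action, hence as Construction II's action on ${\bf RP}^n$.

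I expect the crux to be the reconstruction rather than the counting: the map $\Phi$ degenerates precisely over the fixed points, where it collapses the entire sphere fiber $G/Q^0$ to a single point, so the transversality argument says nothing there and the fixed points must be reinserted by hand through the $C^k$-linearization of Cairns--Ghys (Theorem \ref{thm:linearization}). Verifying that this capping is smooth, that it reproduces Construction II on the nose, and that the two cases assemble to ${\bf S}^n$ and ${\bf RP}^n$ respectively, is where the genuine work lies.
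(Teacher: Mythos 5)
Your outline follows the paper's architecture closely (linearize at a fixed point, pass to the $C^0$-fixed circle $\Sigma^0$ with the $\{a^t\}$-flow and the involution $\tau=\sigma|_{\Sigma^0}$, count $\Fix(\tau)$ via Lemma \ref{lem:involution}, and reconstruct $M$ from $\Phi\colon G\times_{Q^0}\Sigma^0_+\to M$ plus linearized caps). But there is a genuine gap at exactly the point you wave at in your last paragraph. Your reconstruction glues the linearizing charts onto the two ends of $G\times_{Q^0}I$ ``exactly as in Construction II,'' but Construction II requires the vector field on $[-1,1]$ to vanish at \emph{both} endpoints with derivative $+1$, and hence to have at least one interior zero $z_-\le z_+$; the gluing of $\BR^n$ is performed along $(-1,z_-)$ and $(z_+,1)$. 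Nothing in your argument rules out the degenerate scenario in which the open orbit $\cong\BR^n\ssm\{0\}$ filling the punctured neighborhood of $p$ is \emph{all} of $M\ssm\{p,x_1\}$, i.e.\ $M$ is a smooth one-point compactification of $(\BR^n,\rho)$ with no intermediate non-open orbit. The paper devotes Proposition \ref{prop:no_1pt_compactification} and its corollary to this: the $Q$-invariant tangent line to $\Sigma^0$ at $x_1$ forces the linearization there to be $\rho$ as well, so $\{a^t\}$ is expanding at $x_1$; a flow on an arc whose only zeros are its two endpoints cannot be expanding at both, so there must be interior zeros, $x_1\notin\partial I_0$, and the one-point compactification is impossible. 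Without this step the identification with Construction II, and in particular the claim that the second cap is glued along a proper subinterval $(z_+,1)$, is not justified.

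A related soft spot is your treatment of $\rho$ versus $\rho^*$. Precomposing the whole action with the outer automorphism $g\mapsto(g^{-1})^t$ produces a \emph{different} action, whereas the theorem asserts that the given action is as in Construction II (which is built from $\rho$ via the $Q$-invariant line $\ell_0$). The correct argument, which the paper uses at $x_0$ and again at $x_1$, is that $\Sigma^0$ is a $Q$-invariant curve through the fixed point, so the isotropy representation has a $Q$-invariant line, which $\rho^*$ lacks for $n\ge 3$; this forces $\rho$ outright and is precisely the mechanism that yields the ``expanding at both ends'' contradiction above. (Two smaller imprecisions: in the one-fixed-point case the image of $\Phi$ is $M\ssm(\Fix(G)\cup G.x_1)$, not $M\ssm\Fix(G)$; and the list of possible stabilizers along $\Sigma^0$ should include $E$, which is then excluded on $\Sigma^0_+$ because $\tau$ interchanges the two arcs.)
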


\begin{proof}
Suppose that $x_0 \in M$ is $G$-fixed, and let $\Sigma^0$ be the
connected component of $\Sigma$ containing $x_0$.  This is a
$Q$-invariant curve through $x_0$, so there is a $Q$-invariant line
$\ell_0$ tangent to $\Sigma^0$ in
the isotropy representation of $G$ at $x_0$.  The isotropy is thus the
standard representation $\rho$.  Note also that $G \cong \SL(n,\BR)$.
Let $\{ a^t \}$, as above, be the one-parameter subgroup in the center of
$L \cong \GL(n-1,\BR) < Q$.  In a suitable parametrization $\rho(a^t)$
has eigenvalue $e^{t}$ on $\ell_0$.  Thus there
is a neighborhood of $x_0$ in $\Sigma^0$ in which $x_0$ is the only
$Q$-fixed point.  Let $\sigma \in C$ be as above,
so that $\rho(\sigma)$ acts as $-\mbox{Id}_1$ on $\ell_0$.  Both $\{ a^t \}$ and $\sigma$ have no fixed
points on $\Sigma^0 \backslash \{ x_0 \}$ in a neighborhood of $x_0$.
Thus in this neighborhood, points of
$\Sigma^0\backslash \{x_0 \}$ have stabilizer
contained in
$E^0$, which means, thanks to Theorem \ref{thm:orbits}, that these
stabilizers are $E^0$ and the corresponding orbits are $\BR^n
\backslash \{  0 \}$.  Then an $n$-dimensional $G$-orbit fills a punctured
neighborhood of $x_0$.
Finally, a neighborhood $U_0$ of $x_0$ is
$G$-equivariantly homeomorphic to $(\BR^n,0)$.

Now \cite[Thm 1.1]{cairns.ghys.linearize}, stated here as Theorem \ref{thm:linearization},
applies to give that the $G$-action on
$U_0$ is smoothly equivalent to the
representation $\rho$. Let $I_0 = \Sigma^0 \cap U_0$,
the open interval corresponding in these coordinates to the line $\ell_0$
through the origin pointwise fixed by $E^0$ and invariant by $Q$.

Let $\tau = \left. \sigma \right|_{\Sigma^0}$.  The involution $\tau$ has exactly one other fixed point, call it $x_1$, in
$\Sigma^0$, by Lemma \ref{lem:involution}.


\begin{prop}
  \label{prop:no_1pt_compactification}
The standard $\SL(n,\BR)$-representation $\rho$ on $\BR^n$ does not extend to a smooth action on any smooth one-point compactification.
  \end{prop}

\begin{proof}
  Assume $n \geq 3$.
Let $\{ a^t \}$ be the one-parameter subgroup as above, oriented such that $\|
\mbox{Ad } a^t \|  > 1$ on $\fu^+$, the unipotent radical of $\fq$, for $t > 0$.  This implies that $a^t$ is expanding on the $Q$-invariant line $\ell_0$ for $t > 0$.

Suppose that for a smooth structure on the one-point compactification
$\BR^n \cup \{ x_1 \}$ the $\SL(n,\BR)$-action extends smoothly.  In
the linearization at $x_1$ given by Theorem \ref{thm:linearization},
the image of $I_0 \cup \{x_1 \}$ contains a $Q$-invariant line
$\ell_1$.  Then the representation in this linearization is $\rho$.
That means $a^t$ is expanding on $\ell_1$ for $t > 0$.  Then the union
of the curves corresponding to $\ell_0$ and $\ell_1$ is a circle containing exactly two $\{a^t \}$-fixed points, both of which are expanding, a contradiction.

Though we do not need it here, we note the proof requires modification when $n=2$.  In that case,
direct computation shows that in $\rho^*$, the action of $a^t$ on the $Q$-invariant line also moves points away from the origin.  So when $n=2$, the contradiction is similar.
    \end{proof}


As $\{ a^t \}$ normalizes $C$ and commutes with $C^0$, it leaves
$\mbox{Fix} (\tau) = \{ x_0 , x_1 \}$ invariant.  Thus $x_1$ is also
$\{ a^t \}$-fixed.

\begin{cor}
At the $\tau$-fixed point $x_1$, the $\{a^t\}$-action is expanding on $\Sigma^0$.  The point $x_1$ does not lie on the boundary of $I_0$.
  \end{cor}

As in the proof above, the existence of a $Q$-invariant 1-manifold through $x_1$ forces the linearization at $x_1$ to be $\rho$, so $\{ a^t \}$ is expanding.  Since $\{ a^t \}$ is also expanding on $\Sigma^0$ at $x_0$, this precludes $x_1 \in \partial I_0$.

We now proceed with the identification of the action on $M$.  Let
$\Sigma^0_\pm$ be the two connected components of $\Sigma^0 \backslash
\{ x_0, x_1 \}$.  The stabilizers of all points of $\Sigma^0_+$ are
conjugate in $G$ to $Q, Q^0, E$, or $E^0$.  By the same argument as in
Section \ref{subsec:no_fixed_points}, the stabilizers are in fact equal to one of these subgroups.  One consequence is that $\Sigma^0_+$ is $Q^0$-invariant.
As $\tau.\Sigma^0_+ = \Sigma^0_-$, the union $\Sigma^0_+ \cup \Sigma^0_-$ is $Q$-invariant, and stabilizers of points in $\Sigma^0_+$ are in fact one of $Q^0$ or $E^0$.

Now we can define
$$ \Phi: G \times_{Q^0} \Sigma^0_+ \rightarrow M \qquad \Phi: [(g,x)] \mapsto g.x$$
As in the proof of Theorem \ref{thm:no_fixed_points}, $\Phi$ is a local diffeomorphism; as such, it has open image in $M$.

Recall that $G/Q^0 \cong K/C^0$.  There is in fact a natural $K$-equivariant diffeomorphism
$$ K \times_{C^0} \Sigma^0_+ \rightarrow G \times_{Q^0} \Sigma^0_+$$
mapping the $C^0$-orbit of $(k,x) \in K \times \Sigma^0_+$ to the
corresponding $Q^0$-orbit in $G \times \Sigma^0_+$. This map is
well-defined and injective because $C^0 = K \cap Q^0$.  It is easy to
see the map is open.  Surjectivity follows from the Iwasawa
Decomposition: write any $g \in G$ as a product $ka'n$, with $k \in
K$, $a' = a^t a^{''} \in A$, $a^{''} \in A \cap E^0$ and $n \in N < E^0$; then, given any $x \in \Sigma^0_+$, we have $[(g,x)] = [(k,a^t.x)]$.
The composition of this diffeomorphism with $\Phi$ is $[(k,x)] \mapsto
k.x$, which is injective because the stabilizer in $K$ of any $x \in \Sigma_+^0$ equals $C^0$.  We conclude that $\Phi$ is a diffeomorphism
onto its image, which is in turn diffeomorphic to ${\bf S}^{n-1} \times \Sigma^0_+$.

Let $(I_0)_\pm = U_0 \cap \Sigma^0_\pm$, so $(I_0)_+ \cup
(I_0)_- = I_0 \backslash \{ x_0 \}$.  The restriction of $\Phi$
to $G \times_{Q^0} (I_0)_+$ is a $G$-equivariant diffeomorphism to
$U_0 \backslash \{ x_0 \}$.  Under the $K$-equivariant identification
with ${\bf S}^{n-1} \times (I_0)_+$, the fibers $\{p \} \times (I_0)_+$ are $K$-equivariantly identified with the rays from the origin in $U_0 \cong \BR^n$.  Thus $U_0 \cup \mbox{Im } \Phi$ is $K$-equivariantly diffeomorphic to $\BR^n$.

Suppose $x_1$ is a $G$-fixed point, and let $U_1$ be an open
neighborhood of $x_1$ in $M$ on which the $G$-action is equivalent to
the linear action by $\rho$.  Let $(I_1)_+ = U_1 \cap \Sigma^0_+$.
As in the previous paragraph, $\Phi$ restricted to $G \times_{Q^0}
(I_1)_+ \cong {\bf S}^{n-1} \times (I_1)_+$ identifies fibers
$\{ p \} \times (I_1)_+$ with rays from the origin in $U_1
\backslash \{ x_1 \}$ in a $K$-equivariant manner.  The fibers $ \{ p
\} \times (I_1)_+$ are in turn identified $K$-equivariantly with
infinite segments of rays from the origin in $\BR^n$ under its
identification with $U_0 \cup \mbox{Im } \Phi$.  Thus $U_1 \cup
\mbox{Im} \Phi \cup U_0$ is $K$-equivariantly diffeomorphic to ${\bf
  S}^n$.  It is moreover open and closed in $M$, so it equals $M$.  We
conclude that $M$ is $G$-equivariantly diffeomorphic to the $G$-action
on ${\bf S}^n$ in Construction II with $ \{ \psi^t_X \}$ equal $\{ a^t
\}$ restricted to $\Sigma^0_+$.

Now suppose $x_1$ is not $G$-fixed. The stabilizer of $x_1$ contains
$C^0, \sigma$, and $\{ a^t \}$.  Thus it equals $Q$, and the orbit of $x_1$ is
$G/Q \cong K/C \cong {\bf RP}^{n-1}$. The $K$-invariant metric $\kappa$
determines a normal bundle to $G.x_1$, and an identification of a
neighborhood of the zero section with a normal neighborhood $U_1 \cong
K/C \times (-\epsilon, \epsilon)$ of the orbit.  The fiber over $x_1$,
call it $I_1$, comprises $C^0$-fixed points and is contained in
$\Sigma^0$.  Thus $I_1 \backslash \{ x_1 \}$ intersects $U_0 \cup
\mbox{Im } \Phi$ in two components, $(I_1)_+$ and $(I_1)_-$,
contained in $\Sigma^0_+$ and $\Sigma^0_-$, respectively.  The
saturation $K.(I_1)_+$ equals $U_1 \backslash G.x_1$, and each
distinct translate $k.(I_1)_+$ is identified with an infinite
segment of a unique ray from the origin in $U_0 \cup \mbox{Im } \Phi
\cong {\BR}^n$.  The resulting $K$-equivariant gluing of the normal
bundle of $G.x_1$ to $U_0 \cup \mbox{Im } \Phi$ is equivalent to the
gluing of the normal bundle of ${\bf RP}^{n-1}$ to ${\BR}^n$ yielding
${\bf RP}^{n}$.  We obtain that $M = U_0 \cup \mbox{Im } \Phi \cup
U_1$ is diffeomorphic to ${\bf RP}^n$, with the $G$-action on ${\bf
  RP}^n$ in
Construction II corresponding to $\{ \psi^t_X \}$ equal $\{ a^t \}$
restricted to $\Sigma_+^0 \cup \{ x_1\}$.
\end{proof}

\section{Analytic classification}

In construction I of section \ref{subsec:Gactions}, the
actions are determined by the vector field $X$ on $\Sigma^0 \cong {\bf
  S}^1$ and the involution $\tau$ of $\Sigma$ commuting with $X$.
There are four possibities for $\tau$, corresponding to the four
possible diffeomorphism types in Theorem \ref{thm:no_fixed_points}.
In construction II, the action is determined by the vector field $X$
on the interval
$\Sigma_+ = [-1,1]$; the ${\bf RP}^n$-actions correspond to $X$ being
invariant by $x \mapsto -x$.  By doubling $\Sigma_+$ and gluing at the
endpoints $-1$ and $1$, the vector field in this case determines a
vector field on ${\bf S}^1$ invariant by a reflection (invariant by
two reflections in the case corresponding to an action on ${\bf
  RP}^n$).

Thus, aside from the aforementioned finite data,
$G$-actions on
closed $n$-manifolds are determined by a smooth vector field on a
circle, with some additional symmetries according to the type and
subtype.

\begin{prop}
  \label{prop:analyticity}
The vector field $X$ and the involution $\tau$ are real-analytic,
rather than just smooth, if and only if the resulting $n$-manifold and
$\SL(n,\BR)$-action are real analytic.
\end{prop}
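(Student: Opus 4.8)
The plan is to prove the two implications separately, extracting the data $(X,\tau)$ from the pair $(M,\text{action})$ as the restriction of intrinsic, action-defined objects to the submanifold $\Sigma=\Fix(C^0)$ studied in Proposition \ref{prop:C0_fixed}.

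For the forward implication I would argue that every ingredient of Constructions I and II preserves analyticity. Since $G=\SL(n,\BR)$ is a real-analytic group and $Q$, $Q^0$ are analytic subgroups with $\nu,\nu_0$ analytic, and since the flow $\{\psi^t_X\}$ of an analytic vector field $X$ is jointly analytic in $(t,x)$, the $Q$- or $Q^0$-action $\mu$ on $\Sigma$ is analytic once $X$ and $\tau$ are. The associated bundle $G\times_Q\Sigma$ (respectively $G\times_{Q^0}\Sigma_+$) then carries its natural analytic structure as the quotient of the analytic manifold $G\times\Sigma$ by the free, proper, analytic $Q$-action, with analytic $G$-action by left translation. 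In Construction II it remains to check that gluing in copies of $(\BR^n,\rho)$ is analytic: the representation $\rho$ is analytic, and the identification of the $\{a^t\}$-invariant ray $\ell_0$ with a subinterval of $\Sigma_+$ is an analytic conjugacy, because in dimension one two analytic flows with a common fixed point and equal derivative there (both normalized to $1$) are analytically conjugate. Extending $G$-equivariantly via the analytic $G$-action yields an analytic gluing, so $M$ and its $G$-action are analytic.

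For the converse, suppose $(M,\text{action})$ is real-analytic. The compact subgroup $C^0\cong\SO(n-1)$ then acts analytically, and the key point is that $\Sigma=\Fix(C^0)$ is a real-analytic submanifold. I would produce an invariant analytic Riemannian metric by averaging: embed $M$ analytically into some $\BR^N$, pull back the Euclidean metric to get an analytic metric $\kappa_0$, and set $\kappa=\int_{C^0}\phi_g^*\kappa_0\,dg$; since the integrand is jointly analytic in $(g,x)$ and $C^0$ is compact, $\kappa$ is an analytic $C^0$-invariant metric. The exponential map of $\kappa$ is analytic and $C^0$-equivariant, so near each $x\in\Sigma$ it carries the linear subspace $(T_xM)^{C^0}$ analytically onto a neighborhood in $\Sigma$; hence $\Sigma$ is analytic. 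Now $\{a^t\}$ commutes with $C^0$ and therefore preserves $\Sigma$, and its fundamental vector field is analytic on $M$; its restriction to the analytic submanifold $\Sigma$ is exactly $X$, hence analytic. Likewise $\sigma\in C$ is an analytic diffeomorphism of $M$ preserving $\Sigma$, so $\tau=\sigma|_{\Sigma}$ is its analytic restriction.

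The main obstacle is precisely the regularity statement that $\Fix(C^0)$ is real-analytic, i.e. the averaging and exponential-map argument above; everything else reduces to standard facts (analytic dependence of ODE solutions, the analytic structure on associated bundles, and one-dimensional analytic linearization). The delicate point is verifying that integrating the analytic family $g\mapsto\phi_g^*\kappa_0$ over the compact group $C^0$ yields an \emph{analytic}, not merely smooth, tensor. This holds because an analytic function of $(g,x)$ integrated over a compact analytic parameter space remains analytic in $x$, but it is exactly the step that distinguishes this argument from its $C^\infty$ analogue and so deserves the most care.
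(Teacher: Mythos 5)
Your proof is correct and follows essentially the same route as the paper's: in the forward direction, check that each ingredient of Constructions I and II (the analytic homomorphisms $\nu,\nu_0$, the analytic flow of $X$, the associated-bundle quotient, and the equivariant gluings) preserves analyticity, and in the converse, recover $X$ and $\tau$ as the restrictions of the fundamental vector field of $\{a^t\}$ and of $\sigma\in C$ to the analytic submanifold $\Sigma=\Fix(C^0)$. The two points you elaborate---that $\Fix(C^0)$ is an analytic submanifold via an averaged analytic metric and its exponential map, and that the one-dimensional gluing conjugacies in Construction II are analytic by Poincar\'e linearization at a zero with nonzero derivative---are precisely the details the paper leaves implicit (the latter attributed to Uchida), and your justifications of them are sound.
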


\begin{proof}
Let $G = \SL(n,\BR)$.
  First assume $M = G \times_Q \Sigma$ as in Construction I.  If the
  vector field $X$ and involution $\tau$ are $C^\omega$, then the
  resulting $\BR^*$-action on $\Sigma^0$ is $C^\omega$.  As $\nu : Q
  \rightarrow \BR^*$ is a $C^\omega$ homomorphism, the lifted
  $Q$-action on $\Sigma^0$ is $C^\omega$.  Next, $Q < G$ is an
  analytic---in fact, algebraic---subgroup, so the diagonal $Q$-action
  on $G \times \Sigma^0$ is $C^\omega$.  We conclude that $M$, the
  quotient by this action, is $C^\omega$.

  If $M$ is built from a $C^\omega$ vector field $X$ on $\Sigma_+ =
  [-1,1]$, then the resulting $Q^0$-action is $C^\omega$ on $(-1,1)$,
  so $M' = G \times_{Q^0} (-1,1)$ is $C^\omega$.  The diffeomorphisms
  from $\ell_0$ to $(-1,z_-)$ and $(z_+,1)$ conjugating $\{ a^t \}$ to the
  respective restrictions of $\{ \psi^t_X \}$ are then $C^\omega$, as
  are their $G$-equivariant extensions.  The gluings are then
  $C^\omega$ quotient maps, so the resulting action on ${\bf S}^n$, or
  the two-fold quotient, ${\bf RP}^n$, is $C^\omega$.  (The analyticity
  in this paragraph was previously proved by Uchida
  \cite[Sec 2]{uchida.slnr.sn}.)

Now suppose $M^n$ is $C^\omega$ with real-analytic $G$-action.
We are assuming the $G$-action is not transitive.  The compact
subgroup $C^0 < G$ is analytic, so the fixed set $\Sigma$ is, too.
As shown in the proofs of Theorems
\ref{thm:no_fixed_points} and \ref{thm:with_fixed_points},
$\Sigma$ is $Q$-invariant.  The restriction of the one-parameter
subgroup $\{a^t \}$ to any component of $\Sigma$ is $C^\omega$.  Similarly, the restriction of the
involution $\sigma \in C$ to any component is $C^\omega$.  These yield the vector
field $X$ and the involution $\tau$, respectively, so this data is real-analytic.
  \end{proof}

N. Hitchin gave a complete set of invariants for $C^\omega$ vector
fields on $S^1$ in \cite[Thm 3.1]{hitchin.vector.fields.s1}.  They are as
follows, for $X \in \mathcal{X}^\omega(S^1)$:
\begin{itemize}
\item A nonnegative integer \emph{number} $k \in {\bf N}$ \emph{of zeros} of $X$.

  \smallskip
  Given a choice of cyclic ordering of the zeros,
  \smallskip

\item An \emph{orientation} $\sigma \in \{ \pm 1 \}$.
  \item A list $m_1, \ldots, m_k$ of positive integers, the
    \emph{orders of vanishing} of $X$ at each zero
    \item A list $r_1, \ldots, r_k$ of real numbers, the
      \emph{residues} of $X$ at each zero.  When $X$ vanishes to order
      1 at $x_i$, the residue $r_i$ is the reciprocal of the
      derivative of $X$ at $x_i$.  The residues are defined analytically in
      general, see \cite[Sec 1]{hitchin.vector.fields.s1}.
      \item  A \emph{global invariant} $\mu \in \BR$.  For $X = f \partial \theta$ nonvanishing, this is the integral around
        $S^1$ of $d \theta/f$.  For general $f$ it is analytically
        defined, see \cite[Sec 2]{hitchin.vector.fields.s1}.
\end{itemize}

Different choices of orientation and cyclic ordering of the zeros
correspond to the dihedral group $D_k$.
More precisely,
$$ \left( \{ \pm 1 \} \times \BR \times \bigsqcup_{k=0}^\infty ({\bf N}^k \times \BR^k) \right) / \bigsqcup_{k=0}^\infty D_k $$
is a Borel subset of a Polish space, providing a \emph{smooth
  classification} of analytic vector fields on $S^1$ up to analytic
conjugacy (see \cite{rosendal.survey, Foreman} for background on this
set-theoretic notion).

  \begin{cor}
    \label{cor:analytic.paramzn}
Real-analytic actions of $\SL(n,\BR)$ on closed, analytic
$n$-manifolds are classified up to equivariant, real-analytic diffeomorphism by the
following set of invariants:
\begin{enumerate}
  \item A type, I or II, or one of the finitely-many transitive
    actions in Theorem \ref{thm:orbits}.
  \item For type I, one of four possible conjugacy classes for the
    analytic involution $\tau$,
    and Hitchin's set of invariants for the analytic vector field $X$,
    commuting with $\tau$.
\item For type II, one of two homotopy types of $M$ and Hitchin's set
  of invariants for the analytic vector field $X$ on $S^1$, having at
  least two zeros of order one with identical positive residues, invariant
  by reflection in this pair of zeros, and
  additionally invariant by the antipodal map in the case $M$ is
  not simply connected.
    \end{enumerate}
    \end{cor}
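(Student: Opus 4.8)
The plan is to assemble the preceding results into a bijection between analytic $\SL(n,\BR)$-actions and the listed invariants. By Theorem \ref{thm:orbits}, every nontrivial analytic action is either one of the finitely many transitive actions or is non-transitive; in the latter case Theorems \ref{thm:no_fixed_points} and \ref{thm:with_fixed_points} place it in Construction I (type I) or Construction II (type II), according to the presence of $G$-fixed points. Proposition \ref{prop:analyticity} shows that passing between an action and its defining data $(X,\tau)$ preserves analyticity in both directions, so it suffices to show that equivariant analytic conjugacy of the actions corresponds exactly to the equivalence of this data recorded by the invariants, and then to quote \cite[Thm 3.1]{hitchin.vector.fields.s1} as a black box for the vector-field part.

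I would treat the reverse direction first: given data with matching invariants, Hitchin's classification yields an analytic diffeomorphism of the circle $\Sigma^0$ conjugating the vector fields $X,X'$ and, after choosing representatives of the involution class, intertwining $\tau,\tau'$. Such a diffeomorphism is $\BR^*$-equivariant for the action through $\nu:Q\rightarrow\BR^*$ defining $\mu_{(X,\tau)}$, hence $Q$- (resp.\ $Q^0$-) equivariant on $\Sigma$. Inducing via $G\times_Q(-)$ (resp.\ $G\times_{Q^0}(-)$, together with the gluings of Construction II) turns it into a $G$-equivariant diffeomorphism $M\rightarrow M'$, which is analytic again by Proposition \ref{prop:analyticity}.

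For the forward direction, let $F:M\rightarrow M'$ be a $G$-equivariant analytic diffeomorphism. Since $F$ commutes with $C^0$, it carries $\Sigma=\Fix(C^0)$ to $\Sigma'$, and since it commutes with the residual action of $Q$ on $\Sigma$---which factors through $\nu$ to the flow of $\{a^t\}$ and the sign of $\sigma$---the restriction $F|_\Sigma$ conjugates the two $\BR^*$-actions. Thus $F_\ast X=X'$ and $F$ intertwines $\tau$ with $\tau'$, so the Hitchin invariants and the involution type agree. The four involution classes in type I are genuine invariants because they are distinguished by the diffeomorphism type of $M$ (Theorem \ref{thm:no_fixed_points}); in type II the two subcases are separated by the homotopy type (${\bf S}^n$ versus ${\bf RP}^n$), and the constraints on $X$---two order-one zeros of equal positive residue, invariance under the reflection fixing that pair, and additionally under the antipodal map in the ${\bf RP}^n$ case---are exactly what the doubling of $\Sigma_+=[-1,1]$ and the normalization $D_{\pm1}X=1$ impose in Construction II, and conversely any $X$ with these properties arises.

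The main obstacle is to pin down precisely which symmetries the equivalence quotients by, so that the cited Hitchin invariants (defined up to the dihedral group $D_k$ of reorderings and orientation reversal) match the conjugacies actually available. The cyclic reordering is harmless, but orientation reversal of the flow is \emph{not} realized by a strictly $G$-equivariant $F$---that map preserves the $\BR^*$-direction---so one must account for it by allowing composition with the outer automorphism $g\mapsto(g^{-1})^t$, which inverts $\{a^t\}$ and interchanges the representations $\rho,\rho^*$ used to identify orbits in the proof of Theorem \ref{thm:orbits}. Verifying that this is the only additional identification, and that it reproduces exactly Hitchin's orientation quotient while respecting the involution classes and the reflection/antipodal constraints of type II, is the delicate bookkeeping that completes the proof.
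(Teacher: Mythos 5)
Your first three paragraphs assemble the corollary exactly as the paper does: the paper gives no separate proof, treating the statement as the combination of Theorems \ref{thm:no_fixed_points} and \ref{thm:with_fixed_points} (which reduce every non-transitive action to the data $(X,\tau)$ on $\Sigma$), Proposition \ref{prop:analyticity} (analyticity passes both ways between the action and the data), and Hitchin's classification quoted as a black box; the two-directional check that strict $G$-equivariant diffeomorphisms of $G\times_Q\Sigma$ correspond precisely to simultaneous conjugacies of $(X,\tau)$ is exactly the ``Borel bireduction'' the paper describes in the paragraph following the corollary.

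Your final paragraph, however, defers the proof's completion to a piece of ``delicate bookkeeping'' that is in fact a non-issue, and misdiagnoses it. The reflections in Hitchin's dihedral quotient $D_k$ account for reversing the \emph{orientation of the abstract circle} $\Sigma^0$ (together with the induced reversal of the cyclic ordering of zeros), not for time-reversal of the flow: the equivalence in \cite[Thm 3.1]{hitchin.vector.fields.s1} is conjugacy $h_*X=X'$ by an arbitrary analytic diffeomorphism $h$, orientation-preserving or not. Such an $h$, when it also intertwines $\tau$ and $\tau'$, is automatically equivariant for the $Q$-actions $\mu_{(X,\tau)}$ and $\mu_{(X',\tau')}$ --- these factor through $\nu:Q\to\BR^*$, and equivariance requires only that $h$ intertwine the flows and the involutions, with no reference to an orientation of $\Sigma^0$ --- so $[(g,x)]\mapsto[(g,h(x))]$ is a strictly $G$-equivariant diffeomorphism of the induced spaces. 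Conversely, a strictly equivariant $F$ restricts on $\Sigma$ to exactly such an $h$, which is free to reverse the circle's orientation. Time-reversal $X\mapsto -X$ is simply not among the identifications Hitchin's moduli make, so nothing needs to be ``realized'' by the outer automorphism $g\mapsto(g^{-1})^t$; invoking it would replace the stated equivalence relation (strict equivariant analytic conjugacy) by the coarser one of conjugacy twisted by $\Out(G)$, which is not what the corollary asserts. (The one place the outer automorphism genuinely enters is in Theorem \ref{thm:orbits}, where orbits with $p=n-1$ are folded into the $p=1$ cases; that is settled before the corollary and does not resurface here.)
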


    The assignment of types and conjugacy classes of suitable vector fields on
    $S^1$ to $\SL(n,\BR)$-actions on closed
    $n$-manifolds, up to equivariant diffeomorphism (in the smooth or
    $C^\omega$ category), is a
    \emph{Borel reduction} (again, see \cite{rosendal.survey, Foreman}); recall, this assignment is by the restriction of
    a certain one-parameter subgroup $\{ a^t \} < \SL(n,\BR)$ to a
    component $\Sigma^0$ of the fixed set of $C^0 \cong
    \mbox{SO}(n-1)$.  The reverse assignment, starting from a type and
    a  conjugacy class of compatible vector fields and constructing a
    closed $n$-manifold with $\SL(n,\BR)$-action, up to equivariant
    diffeomorphism, is also a Borel reduction.  Our construction and classification
    result give a \emph{Borel bireduction} between these two
    equivalence relations, whether in the smooth or analytic category.
Thanks to Hitchin's result we obtain in the analytic case a smooth
classification (in the set-theoretic sense above) of analytic
$SL(n,\BR)$-actions on closed manifolds of dimension $n$ up to
analytic conjugacy.
The subject of smooth classification in dynamical systems has recently received considerable
attention.

Smooth vector fields, in contrast to
analytic ones, do not admit a smooth classification: $E_0$, a
particular Borel equivalence relation on a standard Borel space
known not to admit a smooth classification, can be Borel reduced to
it.  It follows that $\SL(n,\BR)$-actions on closed $n$-manifolds in
the smooth category do not admit a smooth classification (again, see \cite{rosendal.survey, Foreman} for
these notions and the definition of $E_0$).  We thank Christian
Rosendal for very helpful conversations
on this topic.

    \section{Invariant Geometric Structures}
The linear action of $\SL(n,\BR)$ on $\BR^n$ preserves
the standard, flat affine structure, while the transitive action on ${\bf
  S}^{n-1}$ preserves the standard, flat projective structure.

A \emph{projective structure} is an equivalence class of torsion-free
connections, where $\nabla \sim \nabla'$ means there is a $1$-form
$\omega$ on $M$ such that
$$ \nabla'_X Y = \nabla_X Y + \omega(X) Y + \omega(Y) X$$
for all $X,Y \in \mathcal{X}(M)$.
Equivalent connections determine the same geodesic curves, up to
reparametrization. See \cite[Ch 8]{sharpe} or \cite[Ch IV]{kobayashi.transf}.

All actions of $\SL(n,\BR)$ on closed $n$-manifolds are built from
projective actions on $\BR^n$,
$\BR^n \backslash \{ 0 \}$, and ${\bf S}^{n-1}$, but only a few
well-known examples preserve a projective structure.  We will prove this in Section \ref{subsec:no_projective}
below.  These actions all do, however, preserve a \emph{rigid geometric
structure of order two}, a much more flexible notion due to Gromov.

    \subsection{Invariant $2$-rigid geometric structure}

For $k \geq 0$, denote by $\mathcal{F}^{(k)} M$ the bundle of
$k$-frames on $M$ of order $k$, with $\mathcal{F}^{(0)} M = M$.  A
$k$-frame at $x \in M$ is the $k$-jet at $0$ of a coordinate chart
$(\BR^n,0) \rightarrow (M,x)$.  These form a principal
$\GL^{(k)}(n,\BR)$-bundle, where this is the group of $k$-jets at $0$
of local diffeomorphisms of $\BR^n$ fixing $0$.

\begin{prop}
  \label{prop:invt.rgs}
Given any nontrivial, smooth action of $\SL(n,\BR)$ on a compact, $n$-dimensional
manifold $M$, the action of $\SL(n,\BR)$ on $\mathcal{F}^{(2)}M$ is
free and proper.  In particular, the action preserves a $2$-rigid
geometric structure in the sense of Gromov.
  \end{prop}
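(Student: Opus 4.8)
The plan is to prove freeness by reducing it to a purely infinitesimal statement and then deduce properness from freeness together with the compactness of $M$. Freeness of the $G$-action on $\mathcal{F}^{(2)}M$ is equivalent to the assertion that for every $x \in M$ the $2$-jet homomorphism $j^2_x : G_x \to \GL^{(2)}(n,\BR)$, sending $g \in G_x$ to the $2$-jet at $x$ of its action in a local chart, is injective; here $G_x$ is the stabilizer, which by Theorem \ref{thm:orbits} is one of the subgroups $G$, $Q$, $Q^0$, $E$, $E^0$ (or a parabolic in the exceptional cases). I would organize the verification by orbit type. At a $G$-fixed point the linear isotropy is the standard representation $\rho$ by Proposition \ref{prop:fixed_discrete}, which is already faithful, so even $j^1_x$ is injective. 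On an open orbit the local model is the linear action, where $g$ acts by $\rho(g)$ and $D_x g = \rho(g)$; hence $j^1_x(g) = \mathrm{id}$ forces $g = e$, again giving injectivity at first order.

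The essential case is an $(n-1)$-dimensional orbit, where $G_x$ is conjugate to $Q$ or $Q^0$ and the first jet is genuinely insufficient: I would compute that the linear isotropy of $Q$ on $T_x M$ factors through the Levi quotient as $A \mapsto (\det A)\,A$ on the orbit tangent space and through $\nu$ on the normal line, so its kernel is exactly the unipotent radical $U \cong \BR^{n-1}$ of $Q$ (one checks $A \mapsto (\det A)A$ is faithful on the relevant $\GL(n-1,\BR)$-factor for $n \geq 3$). To detect $U$ I would pass to the orbit $\mathbf{RP}^{n-1}$ (or $\mathbf{S}^{n-1}$) and use affine coordinates $y$ around the $Q$-fixed point: a nontrivial $u \in U$ acts by $y \mapsto y/(1 + b\cdot y)$, whose $2$-jet at $0$ has quadratic part $-y_j(b\cdot y)$, nonzero precisely when $u \neq e$. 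Thus $j^2_x$ is injective on $U$, and combined with faithfulness of the linear isotropy on $G_x/U$ this gives injectivity of $j^2_x$ on all of $G_x$. The exceptional transitive orbits of Theorem \ref{thm:orbits}(4) are handled by the same principle: the reductive part of the isotropy acts faithfully at first order with distinct nonzero weights, while the unipotent radical is detected at second order by an identical affine computation on the flag variety.

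For properness I would use that $M$ is compact and that freeness realizes $G$ as an embedded orbit in $\mathcal{F}^{(2)}M$ whose points record the full $2$-jet of the group element. Concretely, suppose $\xi_j \to \xi$ in $\mathcal{F}^{(2)}M$ and $g_j \xi_j \to \eta$; projecting to $M$ gives convergent base points $x_j \to x$ and $g_j x_j \to y$. The frames $\xi_j$ and $g_j \xi_j$ determine the $2$-jet of $g_j$ at $x_j$, and these $2$-jets converge to the one prescribed by $(\xi,\eta)$. Passing to a local linearization near $x$ and $y$ (via Theorem \ref{thm:linearization} at fixed points, and the explicit linear and projective models on the other strata), I would recover $g_j$ from its convergent, bounded jet data exactly as in the linear model $g\cdot(x,F) = (\rho(g)x, \rho(g)F)$, where boundedness of $\rho(g_j)F_j$ and of $F_j$ forces $\rho(g_j)$, hence $g_j$, to subconverge. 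The limit $g \in G$ satisfies $g\xi = \eta$, proving properness; Gromov's criterion then yields the invariant $2$-rigid geometric structure.

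The main obstacle I anticipate is twofold. First, in the $(n-1)$-dimensional case one must verify that the linear isotropy kernel is exactly $U$ and not larger; this is where the arithmetic of $A \mapsto (\det A)A$, the distinction between $Q$ and $Q^0$, and the parity of $n$ together with the central element $-\mathrm{Id}$ must be tracked to match the faithful group $\SL(n,\BR)$ or $\PSL(n,\BR)$. Second, the properness step requires promoting the pointwise statement ``a group element is determined by its $2$-jet'' to a uniform bound on $g_j$ in $G$; making this passage rigorous across the different orbit strata, using compactness of $M$ to glue the local models, is the part that demands the most care.
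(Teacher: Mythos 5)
Your freeness argument is essentially sound: reducing freeness on $\mathcal{F}^{(2)}M$ to injectivity of the $2$-jet map on each stabilizer, checking that the $1$-jet already suffices at fixed points and on open orbits, and detecting the unipotent radical of $Q$ on an $(n-1)$-dimensional orbit via the quadratic term of $y \mapsto y/(1+b\cdot y)$ is a correct (if computation-heavy) route. It differs from the paper, which never computes jets directly: the paper imports freeness and properness from the known $2$-rigidity of the flat projective structure on ${\bf S}^{n-1}$ and the $1$-rigidity of the affine structure on $\BR^n$, and propagates them to $M$ by three structural lemmas (pullback of free-and-proper along an equivariant submersion, extension from an open set and its boundary hypersurface to the closure, and union of closed invariant sets).

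The genuine gap is in your properness step, in two places. First, your framing ``deduce properness from freeness together with the compactness of $M$'' is not a valid principle: a linear irrational flow on ${\bf T}^2$ acts freely on $\mathcal{F}^{(k)}{\bf T}^2$ for every $k$ but not properly, so properness requires its own mechanism. Second, the mechanism you propose---recover $g_j$ from its convergent $2$-jet data ``exactly as in the linear model,'' passing to ``local linearization near $x$ and $y$'' and ``the explicit linear and projective models on the other strata''---breaks down precisely at the $(n-1)$-dimensional orbits, which is where the difficulty is concentrated. A neighborhood of such an orbit in $M$ is modeled on $G\times_{Q^0}(\text{interval})$, where the transverse dynamics is the flow of an essentially arbitrary vector field $X$ composed with $\nu_0$; there is no linear or projective local model for the action on that neighborhood of $M$, only for its restriction to the orbit itself. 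So when $x_j$ lies in an open orbit and $x_j\to x$ with $x$ in a closed $(n-1)$-dimensional orbit, convergence of the full $2$-jets of $g_j$ at $x_j$ does not hand you a bounded sequence in any linear group by the argument you describe. What actually closes this is either the paper's submersion lemma---project out the normal direction via the equivariant submersion to ${\bf RP}^{n-1}$ (or ${\bf S}^{n-1}$), restrict to the subbundle of frames adapted to the fibers, and pull back properness from $\mathcal{F}^{(2)}{\bf RP}^{n-1}$, where the projective Cartan geometry gives it---or an explicit argument that the tangential part of the $2$-jet alone already gives a proper map $G\to \GL^{(2)}(n,\BR)$ uniformly over the relevant compact sets, together with the closure/union bookkeeping needed for Construction II. As written, your proof asserts the conclusion of this step rather than proving it.
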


For the definition of \emph{rigid geometric structure of order $k$}
we refer to \cite[0.3]{gromov.rgs}, \cite[Def 3.7]{ballmann.rgs}, or
\cite[Sec 4]{feres.framing.frobenius}.  For the equivalence for a smooth
Lie group action between preserving a $k$-rigid geometric structure
and acting freely and properly on
$\mathcal{F}^{(k)}M$, see
\cite[0.4]{gromov.rgs} or \cite[Thm 3.22]{ballmann.rgs}

\begin{rem}
  Gromov asserts in \cite[0.4.C3]{gromov.rgs} that any real-analytic action
  of a semisimple Lie group with finite center is 3-rigid.
  Benveniste-Fisher assert the 2-rigidity of a specific
$\SL(n,\BR)$-action on a manifold of type (4) in
Theorem \ref{thm:no_fixed_points}, obtained by blowing up the origin in
the standard $\SL(n,\BR)$-action on ${\bf RP}^n$ \cite[Sec 3]{benveniste.fisher.no.rgs}.
\end{rem}

\begin{lemma}
  \label{lem:free.proper.submersion}
Let $\pi: M' \rightarrow N$ be a smooth submersion and $k \geq 0$.  Suppose $\pi$ is
equivariant with respect to smooth actions of a group $G$ on $M'$ and
$N$.  If $G$ acts freely and properly on $\mathcal{F}^{(k)}N$, then it
acts freely and properly on $\mathcal{F}^{(k)}M'$.
\end{lemma}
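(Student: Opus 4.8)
The plan is to translate both freeness and properness into statements about the $k$-jets of elements of $G$ at points of $M'$, and then to exploit submersion-adapted coordinates to compare these jets with their counterparts on $N$. Write $m=\dim M'$ and $d=\dim N$, so $m\geq d$. First I would record that, since $\GL^{(k)}(m,\BR)$ acts simply transitively on the fibers of $\mathcal{F}^{(k)}M'$ and commutes with the left $G$-action, an element $g$ stabilizes a $k$-frame over $x'$ if and only if $g(x')=x'$ and the $k$-jet $j^k_{x'}g\in\GL^{(k)}(m,\BR)$ is trivial, independently of the chosen frame. Thus freeness on $\mathcal{F}^{(k)}M'$ is equivalent to injectivity of $G_{x'}\to\GL^{(k)}(m,\BR)$, $g\mapsto j^k_{x'}g$, for every $x'$, and similarly over $N$. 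The central observation I would isolate is that, because $\pi$ is a submersion, one may choose charts on $M'$ and on $N$ carrying $\pi$ to the linear projection $(u,v)\mapsto u$ of $\BR^m=\BR^d\times\BR^{m-d}$ onto $\BR^d$; in such coordinates $G$-equivariance of $\pi$ forces any $g$ with $g(x')$ in the adapted chart to act by $(u,v)\mapsto(g_N(u),h(u,v))$, the first component depending on $u$ alone and coinciding with the local expression of the $N$-action. Hence $j^k_{x'}g$ is block triangular, and its base block is precisely $j^k_{\pi(x')}g\in\GL^{(k)}(d,\BR)$.

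For freeness the transfer is then immediate: if $g\in G_{x'}$ satisfies $j^k_{x'}g=\mathrm{Id}$, reading off the base block in adapted coordinates gives $j^k_{\pi(x')}g=\mathrm{Id}$ in $\GL^{(k)}(d,\BR)$, whereupon freeness of the $G$-action on $\mathcal{F}^{(k)}N$ forces $g=e$.

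For properness I would invoke the standard sequential criterion for actions of a Lie group on a manifold: the action is proper iff, whenever $\xi_i\to\xi$ and $g_i\cdot\xi_i\to\eta$, the sequence $(g_i)$ subconverges in $G$. Given such sequences in $\mathcal{F}^{(k)}M'$, let $x'_i\to x'$ be the base points and $g_ix'_i\to y'$, and choose adapted charts near $x'$ and near $y'$, with companion charts near $\pi(x')$ and $\pi(y')$ on $N$. In these trivializations $\xi_i\leftrightarrow(p_i,A_i)$ with $A_i\to A$ invertible and $g_i\cdot\xi_i\leftrightarrow(q_i,B_i)$ with $B_i\to B$; the jet-composition law applied to the local representative $\tilde g_i$ of $g_i$ gives $j^k_{p_i}(\tilde g_i)=B_iA_i^{-1}\to BA^{-1}\in\GL^{(k)}(m,\BR)$, an invertible jet. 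Taking base blocks, the $N$-jets $j^k_{\pi(x'_i)}g_i$ converge in $\GL^{(k)}(d,\BR)$. I would then let $\zeta_i\in\mathcal{F}^{(k)}N$ be the frame over $\pi(x'_i)$ given by the standard $k$-jet of the companion chart; since $\pi(x'_i)\to\pi(x')$ we get $\zeta_i\to\zeta$, while the preceding convergence shows $g_i\cdot\zeta_i$ also converges, its base point being $\pi(g_ix'_i)\to\pi(y')$. Properness of the $G$-action on $\mathcal{F}^{(k)}N$ then yields a convergent subsequence of $(g_i)$, as required.

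The hard part, and the reason I would organize the argument sequentially rather than through a single equivariant map, is that there is no canonical $G$-equivariant map $\mathcal{F}^{(k)}M'\to\mathcal{F}^{(k)}N$: extracting a $d$-frame on $N$ from an $m$-frame on $M'$ demands a choice of complement to the vertical directions, which is not natural. One can form the subbundle of $\pi$-adapted frames, which does map equivariantly to $\mathcal{F}^{(k)}N$, but that projection is not proper—its fibers contain entire $\pi$-fibers—so properness cannot be pulled back along it formally. The sequential argument sidesteps this precisely because the hypothesis $x'_i\to x'$ pins down the base points $\pi(x'_i)$, permitting the convenient reference frames $\zeta_i$ whose vertical data are irrelevant; the only genuine input is that the base block of a convergent invertible $k$-jet converges and stays invertible, which follows from the block-triangular form above. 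I would also note at the outset that the equivalence between properness and the sequential criterion is valid here, since all spaces in question are second-countable, locally compact, and Hausdorff.
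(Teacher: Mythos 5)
Your proof is correct, but it is organized quite differently from the paper's. The paper first passes to the closed $G$-invariant subbundle $\mathcal{S}\subset\mathcal{F}^{(k)}M'$ of $\pi$-adapted frames, which carries a canonical $G$-equivariant map to $\mathcal{F}^{(k)}N$; it pulls freeness and properness back to $\mathcal{S}$ along that map, and then extends both to the full frame bundle using the commuting right $\GL^{(k)}(m,\BR)$-action (conjugation of stabilizers for freeness, and a covering of a given compact set by finitely many translates $U_i.H_i$ of compact pieces of $\mathcal{S}$ for properness). You instead verify the sequential criterion for properness directly on $\mathcal{F}^{(k)}M'$, using submersion-adapted charts and the block-triangular form of the $k$-jets of the local representatives of $g_i$; your observation that the base block of a convergent invertible block-triangular jet converges to an invertible jet is exactly the point that makes this work, and your freeness argument via triviality of the base block is likewise sound. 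The trade-off is that your route is more computational but self-contained in one pass, while the paper's route isolates a reusable structural step (the same subbundle-plus-extension pattern reappears in its Lemma on boundary hypersurfaces). One correction to your closing discussion: your reason for rejecting the subbundle route is not valid. Properness of a $G$-action \emph{does} pull back along an arbitrary continuous equivariant map $f:X\to Y$, proper or not, because $G_K\subseteq G_{f(K)}$ for every compact $K\subseteq X$ and $G_K$ is closed; the non-properness of $\mathcal{S}\to\mathcal{F}^{(k)}N$ is irrelevant. The genuine obstacle in that route is only that $\mathcal{S}$ is a proper subbundle of $\mathcal{F}^{(k)}M'$, so one must still propagate freeness and properness from $\mathcal{S}$ to the whole bundle --- which is precisely the extra argument the paper supplies.
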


\begin{proof}
  Let $m = \dim M'$ and $n = \dim N$.
Let $\mathcal{S} \subset \mathcal{F}^{(k)}M'$ comprise the $k$-jets of
coordinate charts $\varphi: (\BR^m,0) \rightarrow (M',x)$ for which
$(\pi \circ \varphi)(0 \times \BR^{m-n})$ is constant to order $k$ at $0$, for all $x
\in M'$; in other words, $\varphi(0 \times \BR^{m-n})$ is tangent to
the $\pi$-fiber of $x$ to order $k$. The set $\mathcal{S}$ is
$G$-invariant and closed---in fact, it is a subbundle of $\mathcal{F}^{(k)}M'$.

Each $k$-frame in $\mathcal{S}_x$ gives a $k$-frame to $N$ at
$\pi(x)$, for all $x \in M'$, by restricting a representative
coordinate chart to $\BR^n \times 0$ and
composing with $\pi$.  This association is in fact a $G$-equivariant
map $\mathcal{S} \rightarrow \mathcal{F}^{(k)}N$.  By the elementary
fact that freeness and properness of an action pulls back by
equivariant maps, we see that $G$ acts freely and properly on
$\mathcal{S}$.

Recall that $\mathcal{S}$ is a closed subbundle of $\mathcal{F}^{(k)}M'$.
The group $\GL^{(k)}(m,\BR)$ acts transitively
on fibers of $\mathcal{F}^{(k)}M'$, commuting with the $G$-action.
The stabilizer in $G$ of $\xi \in \mathcal{F}^{(k)}M'$ is thus equal
the stabilizer of $\xi.h$ for any $h \in \GL^{(k)}(m,\BR)$.  Since $G$
acts freely on $\mathcal{S}$, it acts freely on all
of $\mathcal{F}^{(k)}M'$.

Let $K \subset \mathcal{F}^{(k)}M'$ be a compact subset and consider
$G_K$, comprising all $g \in G$ with $g.K \cap K \neq \emptyset$.  Let
$\bar{K}$ be the projection of $K$ to $M'$ and cover $\bar{K}$ with
finitely-many compact sets $\bar{U}_i$ over which the bundle
$\mathcal{F}^{(k)}M'$ is trivializable.   Let $U_i \subset
\mathcal{S}$ be sections over $\bar{U}_i$.  There are compact subsets
$H_i \subset \GL^{(k)}(m,\BR)$ such that $K \subseteq \cup_{i}
U_i.H_i$.  Now
$$ G_K \subseteq \bigcup_{i,j} G_{U_i.H_i,U_j.H_j}$$
where $G_{A,B}$ comprises the elements $g$ with $g.A \cap B \neq \emptyset$.
These subsets in turn can be expressed
$$ G_{U_i.H_i,U_j.H_j} = G_{U_i,U_j.(H_jH_i^{-1})} = G_{U_i,V}$$
where $V = U_j. (H_jH_i^{-1}) \cap \mathcal{S}$, because $U_i \subset
\mathcal{S}$ and $\mathcal{S}$ is $G$-invariant.  Now because
$\mathcal{S}$ is closed, and by properness of
the $G$-action on $\mathcal{S}$, the set $G_{U_i,V}$ is compact.  We
conclude that $G_K$ is compact, so $G$ acts properly on all of $\mathcal{F}^{(k)}M'$.
\end{proof}

\begin{lemma}
  \label{lem:free.proper.closure}
Let $U \subset M$ equal the closure of its interior $\mathring{U}$, and assume that
$\partial U = D$ is a smooth hypersurface, not necessarily connected.
Let $G$ act smoothly on $M$ leaving $U$ invariant.
For any $k \geq 0$, if $G$ acts freely and properly on
$\mathcal{F}^{(k)} \mathring{U}$ and on $\mathcal{F}^{(k)} D$, then
$G$ acts freely and properly on $\left. \mathcal{F}^{(k)} M \right|_U$.
\end{lemma}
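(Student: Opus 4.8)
The plan is to treat interior frames and boundary frames separately and then to show that the two regimes do not interact. Write $\left.\mathcal{F}^{(k)}M\right|_U = \mathcal{F}^{(k)}\mathring{U} \sqcup \left.\mathcal{F}^{(k)}M\right|_D$, a disjoint union of a relatively open and a closed piece. Since any $g \in G$ that preserves $U$ also preserves $\mathring{U}$ and $D = \partial U$ separately, both pieces are $G$-invariant. Over the interior, $\mathcal{F}^{(k)}\mathring{U}$ is open in $\mathcal{F}^{(k)}M$ and the hypothesis gives a free, proper action directly. Over the boundary I would establish that $G$ acts freely and properly on $\left.\mathcal{F}^{(k)}M\right|_D$ by the embedded-submanifold analogue of Lemma \ref{lem:free.proper.submersion}: let $\mathcal{S}_D \subset \left.\mathcal{F}^{(k)}M\right|_D$ be the closed, $G$-invariant subbundle of $k$-frames $\varphi \colon (\BR^n,0) \to (M,x)$, $x \in D$, whose restriction to $\BR^{n-1} \times 0$ is tangent to $D$ to order $k$. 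Restricting such a chart to $\BR^{n-1} \times 0$ yields a $k$-frame of $D$, defining a $G$-equivariant map $\mathcal{S}_D \to \mathcal{F}^{(k)}D$, so freeness and properness pull back from $\mathcal{F}^{(k)}D$ to $\mathcal{S}_D$. The transitivity of $\GL^{(k)}(n,\BR)$ on the fibers of $\left.\mathcal{F}^{(k)}M\right|_D$, together with the fact that it commutes with $G$, then propagates freeness and properness from $\mathcal{S}_D$ to all of $\left.\mathcal{F}^{(k)}M\right|_D$, exactly as in the proof of Lemma \ref{lem:free.proper.submersion}.

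Freeness on $\left.\mathcal{F}^{(k)}M\right|_U$ is then immediate, being a pointwise condition verified on each invariant piece. For properness I would use the sequential criterion: given $\zeta_n \to \xi$ and $g_n\zeta_n \to \eta$ in $\left.\mathcal{F}^{(k)}M\right|_U$, I must produce a convergent subsequence of $(g_n)$. Because $G$ preserves $\mathring{U}$ and $D$, no orbit ever crosses between the two pieces, so the transporter $G_K = \{g : g.K \cap K \neq \emptyset\}$ splits into an interior and a boundary transporter with no mixed terms; the boundary transporter is controlled by properness on $\left.\mathcal{F}^{(k)}M\right|_D$. The only remaining danger is the familiar one of interior frames whose orbit escapes to the boundary, that is, the case where one of the limit frames $\xi, \eta$ lies over $D$ while the $\zeta_n$ lie over $\mathring{U}$.

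To rule this out I would exploit that convergence in a frame bundle controls jets. Since $\xi$ and $\eta$ are invertible $k$-frames, the $k$-jets $\theta_n := (g_n\zeta_n) \cdot \zeta_n^{-1} = j^k_{z_n} g_n$, where $z_n$ is the base point of $\zeta_n$, converge to the invertible $k$-jet $\Theta := \eta \xi^{-1} \colon (M,z) \to (M,w)$, with $z = \lim z_n$ and $w = \lim g_n z_n$ in $U$; in particular $D_{z_n} g_n \to D\Theta$, an isomorphism. I would then show that $z \in D$ if and only if $w \in D$. Indeed, if $z \in \mathring{U}$ but $w \in D$, choose a boundary-defining function $t \geq 0$ for $U$; the nonnegative functions $t \circ g_n$ take the values $t(g_n z_n) \to 0$ at the interior points $z_n$, which stay a definite distance from $D$, while their first and second derivatives at $z_n$ are bounded (the latter because $j^k_{z_n} g_n$ converges and $k \geq 2$). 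A one-step descent argument then forces $d(t \circ g_n)|_{z_n} \to 0$, whence $D\Theta$ would carry $T_z M$ into $\ker dt|_w = T_w D$, contradicting invertibility of $\Theta$; the reverse implication follows by applying the same argument to $g_n^{-1}$. Once $z$ and $w$ lie on the same side, I conclude: if both are interior, I apply properness on $\mathcal{F}^{(k)}\mathring{U}$ to the frames $\zeta_n$ and $g_n\zeta_n$ directly, since these eventually lie in a fixed compact subset of $\mathcal{F}^{(k)}\mathring{U}$; if both lie over $D$, a transfer of the converging jets $\theta_n$ to collar-projected boundary frames reduces the claim to properness on $\left.\mathcal{F}^{(k)}M\right|_D$.

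The main obstacle is precisely this last transfer across the boundary. The natural device, pushing an interior frame to a nearby boundary frame along a collar of $D$, is not $G$-equivariant: $G$ preserves $D$ but badly distorts every collar, and matching the collar-projected frames before and after applying $g_n$ meets a genuine $C^{k+1}$ discrepancy, since the $k$-jet of $g_n$ is controlled at $z_n$ but not at the collar projection $\pi_D(z_n)$. I expect the resolution to rest on the fact, established above, that $\Theta$ is an invertible jet preserving $D$, so that the first-order—and, using $k \geq 2$, the higher-order—data identifying the limiting boundary frame is already determined by $\theta_n$ alone; carrying this out so that the limit is seen to lie in $\left.\mathcal{F}^{(k)}M\right|_D$, where properness is available, is the crux. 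Note that $k \geq 2$, the case needed for the $2$-rigidity application in Proposition \ref{prop:invt.rgs}, enters in the descent estimate.
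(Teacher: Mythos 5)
Your treatment of the boundary stratum is exactly the paper's: the closed $G$-invariant subbundle $\mathcal{S}_D \subset \left. \mathcal{F}^{(k)}M \right|_D$ of frames tangent to $D$ to order $k$, the equivariant map $\mathcal{S}_D \rightarrow \mathcal{F}^{(k)}D$ along which freeness and properness pull back, and the propagation to all of $\left. \mathcal{F}^{(k)}M \right|_D$ via the fiberwise-transitive $\GL^{(k)}(n,\BR)$-action commuting with $G$. Freeness of the action on $\left. \mathcal{F}^{(k)}M \right|_U$ then follows pointwise from the decomposition into the two invariant pieces, again as in the paper.

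The gap is in properness, and you have flagged it yourself: the case of a sequence of frames over $\mathring{U}$ whose base points converge to $D$ is never closed. Your reduction to it is also not airtight as written: the ``one-step descent'' bounding $d(t \circ g_n)|_{z_n}$ needs a bound on the second derivative of $t \circ g_n$ on a ball of definite radius about $z_n$, whereas convergence of $j^k_{z_n}g_n$ only controls derivatives \emph{at} $z_n$; and the concluding transfer of the jets $\theta_n$ to boundary frames is exactly the step you concede is missing. Moreover your argument invokes $k \geq 2$, while the lemma is asserted for all $k \geq 0$. For comparison, the paper dispatches properness in a few lines: given compact $K \subset \left. \mathcal{F}^{(k)}M \right|_U$, either its projection lies in $\mathring{U}$, in which case properness on $\mathcal{F}^{(k)}\mathring{U}$ applies directly, or it meets $D$, in which case the paper sets $K' = K \cap \left( \left. \mathcal{F}^{(k)}M \right|_D \right)$ and claims $G_K \subseteq G_{K'}$, reducing to properness over $D$. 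So the paper does not perform any jet transfer across the boundary at all; the entire mixed regime is absorbed into that one containment. Your analysis shows you have correctly located where the real content of the properness assertion sits, but your proposal does not supply an argument for it, so as written the proof is incomplete.
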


\begin{proof}
  Let $n = \dim M$.
Let $\mathcal{S}$ comprise the $k$-frames in
$\left. \mathcal{F}^{(k)}M \right|_D$ at $x \in D$ arising from
coordinate charts $\varphi$ such that $\varphi(\BR^{n-1} \times 0)$ is
tangent at $x$ to $D$ up to order $k$---in other words, if $F$ is a
defining function for $D$ in a neighborhood of $x$ in $M$, then $F
\circ \varphi$ restricted to $\BR^{n-1} \times 0$ vanishes to order
$k$ at $0$.  Now $\mathcal{S}$ is a closed, $G$-invariant subbundle of
$\left. \mathcal{F}^{(k)}M \right|_D$.  Each $k$-frame in
$\mathcal{S}$ determines a $k$-frame of $D$, and this correspondence
is a $G$-equivariant map from $\mathcal{S}$ to $\mathcal{F}^{(k)}D$.
As in the previous proof, we conclude that $G$ acts freely and
properly on $\mathcal{S}$ and then, using the
$\GL^{(k)}(n,\BR)$-action,
that $G$ acts freely and properly on the entire $\left. \mathcal{F}^{(k)}M \right|_D$.

Note that
  $\mathcal{F}^{(k)}\mathring{U} = \left. \mathcal{F}^{(k)}M
  \right|_{\mathring{U}}$.  Thus $G$ acts freely on $\left. \mathcal{F}^{(k)}
  M \right|_U =  \left. \mathcal{F}^{(k)}M
  \right|_{\mathring{U}} \cup \left. \mathcal{F}^{(k)}M \right|_D$.

  Given a compact subset $K$ of $\left. \mathcal{F}^{(k)}
  M \right|_U$, suppose first that the projection of $K$ to $U$ lies
in $\mathring{U}$.  Then $G_K$ is compact by our assumption on
$\mathcal{F}^{(k)}\mathring{U}$.  Otherwise, the projection of $K$ has
nontrivial intersection with $D$.  Let $K' = K \cap
\left( \left. \mathcal{F}^{(k)}M \right|_D \right)$.  Since the latter set is closed,
$K'$ is also compact.  Then $G_K \subseteq G_{K'}$, which is compact
by properness of the $G$-action on $\left. \mathcal{F}^{(k)}M \right|_D.$
This completes the proof.
  \end{proof}

\begin{lemma}
  \label{lem:free.proper.union}
Let $M = U \cup V$ be a smooth manifold, and $U$ and $V$ closed subsets.  Let $G$
act smoothly on $M$, leaving $U$ and $V$ invariant.  If $G$
acts freely and properly on $\left. \mathcal{F}^{(k)}M \right|_U$ and
$\left. \mathcal{F}^{(k)} M \right|_V$, then $G$ acts freely and properly on
$\mathcal{F}^{(k)}M$, for any $k \geq 0$.
\end{lemma}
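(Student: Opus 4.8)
The plan is to handle freeness and properness separately, reducing properness to the same ``transfer set'' bookkeeping already used in the proof of Lemma \ref{lem:free.proper.submersion}. Since $M = U \cup V$, every point of $M$ lies in $U$ or in $V$, so as sets $\mathcal{F}^{(k)}M = \left. \mathcal{F}^{(k)}M \right|_U \cup \left. \mathcal{F}^{(k)}M \right|_V$. Freeness of the $G$-action on $\mathcal{F}^{(k)}M$ is then immediate: a $k$-frame lies over a point of $U$ or of $V$, where $G$ acts freely by hypothesis.

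For properness, I fix a compact set $K \subset \mathcal{F}^{(k)}M$ and, writing $G_{A,B} = \{ g \in G : g.A \cap B \neq \emptyset \}$ and $G_K = G_{K,K}$ as in the earlier proofs, aim to show $G_K$ is compact. Because $U$ and $V$ are closed in $M$, the restrictions $\left. \mathcal{F}^{(k)}M \right|_U$ and $\left. \mathcal{F}^{(k)}M \right|_V$ are closed subbundles of $\mathcal{F}^{(k)}M$, so $K_U = K \cap \left. \mathcal{F}^{(k)}M \right|_U$ and $K_V = K \cap \left. \mathcal{F}^{(k)}M \right|_V$ are compact, and $K = K_U \cup K_V$. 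Therefore
$$ G_K \subseteq G_{K_U,K_U} \cup G_{K_U,K_V} \cup G_{K_V,K_U} \cup G_{K_V,K_V}, $$
and it suffices to show each of these four transfer sets is relatively compact. Here I will use the standard fact that a proper action satisfies: for any two compact sets $C_1, C_2$ in the space, $G_{C_1,C_2}$ is relatively compact, obtained by enlarging to $G_{C_1 \cup C_2,\, C_1 \cup C_2}$.

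The two diagonal terms are direct: since $U$ is $G$-invariant, so is $\left. \mathcal{F}^{(k)}M \right|_U$, and $G_{K_U,K_U}$ is exactly a transfer set for the proper $G$-action on $\left. \mathcal{F}^{(k)}M \right|_U$, hence relatively compact; the term $G_{K_V,K_V}$ is identical with $V$ in place of $U$. The essential step, and the only place where the two hypotheses genuinely combine, is the cross terms. For $G_{K_U,K_V}$, invariance gives $g.K_U \subseteq \left. \mathcal{F}^{(k)}M \right|_U$, so any element of $g.K_U \cap K_V$ lies in $K_V \cap \left. \mathcal{F}^{(k)}M \right|_U = K \cap \left. \mathcal{F}^{(k)}M \right|_{U \cap V}$, a compact subset $K_{UV}$ of $\left. \mathcal{F}^{(k)}M \right|_U$. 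Thus $G_{K_U,K_V} \subseteq G_{K_U,K_{UV}}$, a transfer set between two compact subsets of $\left. \mathcal{F}^{(k)}M \right|_U$, which is relatively compact by properness there; the term $G_{K_V,K_U}$ is symmetric and uses properness on $\left. \mathcal{F}^{(k)}M \right|_V$. Finally, since $K$ is compact the projection $G \times K \to G$ is closed, so $G_K$ is closed; being contained in a finite union of relatively compact sets, it is compact. This gives properness on all of $\mathcal{F}^{(k)}M$. The point to watch is precisely these cross terms: one cannot split $K$ over $U \setminus V$ and $V \setminus U$, as those need not be closed, and must instead exploit the $G$-invariance of $\left. \mathcal{F}^{(k)}M \right|_U$ to force the offending intersections into the overlap $\left. \mathcal{F}^{(k)}M \right|_{U \cap V}$, where either hypothesis applies.
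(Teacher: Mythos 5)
Your proof is correct and is exactly the decomposition into the closed, $G$-invariant sets $\left. \mathcal{F}^{(k)}M \right|_U$ and $\left. \mathcal{F}^{(k)}M \right|_V$ that the paper's one-line proof alludes to, with the details supplied. (A small shortcut: invariance already forces $g.\xi \in K_U$ whenever $\xi \in K_U$ and $g.\xi \in K$, so in fact $G_K \subseteq G_{K_U,K_U} \cup G_{K_V,K_V}$ and the cross terms never arise.)
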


\begin{proof}
This proof proceeds easily from the decomposition of
$\mathcal{F}^{(k)}M$ into closed sets $\left. \mathcal{F}^{(k)}M
\right|_U$ and $\left. \mathcal{F}^{(k)}M \right|_V$.
\end{proof}

Here is the proof of Proposition \ref{prop:invt.rgs}.

\begin{proof}
 Projective structures are $2$-rigid
geometric structures in
the sense of Gromov (see \cite[Ch 4]{kobayashi.transf}).
  Thus $G$ acts freely
  and properly on $\mathcal{F}^{(2)}N$ for $N = {\bf S}^{n-1}$ or
  ${\bf RP}^{n-1}$.  The actions in construction I have
  $G$-equivariant maps to $G/Q = {\bf RP}^{n-1}$.  They satisfy the
 conclusion of the proposition by Lemma
 \ref{lem:free.proper.submersion}.

 Now assume $M$ arises from construction II.  The subset $\mathring{V} = G \times_{Q^0}
  (z_-,z_+)$ is open and $G$-invariant, where $z_-$ and $z_+$ are the first and
  last zeroes of the vector field $X$ inside $(-1,1)$; if $z_- = z_+$,
  then take $\mathring{V} = \emptyset$.  First assume $\mathring{V}
  \neq \emptyset$.   There is a
  $G$-equivariant submersion $\mathring{V} \rightarrow G/Q^0 = {\bf S}^{n-1}$,
  so $G$ acts freely and properly on $\mathcal{F}^{(2)} \mathring{V}$ by Lemma
  \ref{lem:free.proper.submersion}.
  Let $V$ be the closure, with boundary a union of two hyperspheres.
  As $G$ preserves
  a projective structure on $\partial V$, it is free and proper on
  $\mathcal{F}^{(2)} \partial V$.  Lemma \ref{lem:free.proper.closure}
gives that $G$ is free and proper on $\left. \mathcal{F}^{(2)} M
\right|_V$.

  Let $U_-$ be the closure in $M$ of the copy of $\BR^n$ glued along
  $\ell_0$ to $(-1,z_-)$.  This is a closed disk, with the standard
  linear $G$-action on the interior $\mathring{U}_-$,
  the standard action on the
  boundary ${\bf
    S}^{n-1}$-fiber over $z_-$, and the normal action to this boundary
  determined by the germ of $X$ at $z_-$.  Regardless of this germ,
  $G$ is free and proper on $\left. \mathcal{F}^{(2)} M
  \right|_{U_-}$.  Indeed, the $G$-action on $\mathring{U}_-$ is affine, so it is
  free and proper on $\mathcal{F}^{(1)}\mathring{U}_-$ and thus also on
  $\mathcal{F}^{(2)}\mathring{U}_-$.  Then Lemma
  \ref{lem:free.proper.closure} applies to give the desired
  conclusion.
Similarly for $U_+$, the closure in $M$ of the copy
  of $\BR^n$ glued along $\ell_0$ to $(z_+,1)$, the $G$-action
  on $\left. \mathcal{F}^{(2)} M
  \right|_{U_+}$ is free and proper.  Still assuming $\mathring{V}
  \neq \emptyset$, two applications of Lemma
  \ref{lem:free.proper.union} on $M = U_- \cup V \cup U_+$, lead to
  the conclusion that $G$ acts freely and
  properly on $\mathcal{F}^{(2)} M$.

  If $\mathring{V} = \emptyset$, then Lemma
  \ref{lem:free.proper.union} applies to $M = U_- \cup U_+$ to yield
  the same conclusion.
We have proved 2-rigidity of the sphere actions in
  construction II.
  The ${\bf RP}^n$-actions are double-covered by sphere actions, so
  the conclusion applies to them, as well.
  \end{proof}

  Recall from Section \ref{subsec:intro_rgs} of the introduction that
  Benveniste--Fisher proved in \cite{benveniste.fisher.no.rgs} nonexistence of an invariant rigid
  geometric structure of algebraic type for certain exotic $\SL(n,\BZ)$-actions
  on ${\bf T}^n$ constructed by Katok--Lewis in \cite{katok.lewis.blowup}.
  That proof relied on the affine local action of ${\bf R}^n$
  on ${\bf T}^n \backslash \{ 0 \}$, which is not available for
  the $\SL(n,\BR)$-actions of Theorems \ref{thm:no_fixed_points} and
  \ref{thm:with_fixed_points}.  We formulate here a variant of
  Question \ref{qtn:rgs.algebraic}:

  \begin{qtn}
    Which smooth $\SL(n,\BR)$-actions on closed $n$-manifolds preserve
    a rigid geometric structure of algebraic type?
    \end{qtn}

    We identify some in the next section, and we expect that these are
    the only ones.

    \subsection{No invariant projective structure for nonstandard
      actions}
    \label{subsec:no_projective}

A projective structure on a manifold $M^n$
determines a canonical Cartan geometry modeled on ${\bf RP}^n$,
comprising a principal $Q_{n+1}$-bundle over $M$ equipped with an
$\mathfrak{sl}(n+1,\BR)$-valued $1$-form satisfying three axioms (see
\cite[Thm 3.8]{sharpe} or \cite[Thm 4.2]{kobayashi.transf}).  Here
$Q_{n+1} < \SL(n+1,\BR)$ is the maximal parabolic subgroup stabilizing a
line of $\BR^{n+1}$ in the standard representation, as usual.  A
consequence is that the isotropy in the group of projective
transformations at any point of
$M$ admits an injective homomorphism to $Q_{n+1}$.

If the projective Weyl curvature of a projective
structure on $M^n$ vanishes, then $M$ is \emph{projectively flat} and has a $(\mbox{PSL}(n+1,\BR),{\bf
  RP}^n)$-structure.  Such a structure corresponds to a projective map
$\delta: \widetilde{M} \rightarrow {\bf S}^n$ called
the \emph{developing map}, a local diffeomorphism, equivariant with respect to a
\emph{holonomy homomorphism} $\rho : \pi_1(M) \rightarrow
\SL(n+1,\BR)$.  See \cite{thurston.3d.book, ot.proj.book} for more about these structures.

An $n$-dimensional \emph{Hopf manifold} is a compact quotient $(\BR^n
\backslash \{ 0 \})/\Lambda$ for $\Lambda$ a lattice in the group of
scalars $\BR^*$, such as
$\Lambda = \{ 2^k \cdot \mbox{Id}_n \ : \ k \in \BZ \}$.  The transitive
$\SL(n,\BR)$-action preserves the flat
connection on these spaces inherited from $\BR^n$.  Note that the
connection on the quotient is
not the Levi-Civita connection of any metric, because it is not unimodular.  These actions are
projective.
Hopf manifolds arise from Construction I with $X$ a nonvanishing vector field on
$\Sigma^0 = S^1$.

The
standard action of $\SL(n,\BR)$ on ${\bf S}^n$ preserves the standard
projective structure, which can be viewed as a projective
compactification of two copies of ${\bf R}^n$ by ${\bf S}^{n-1}$.
It arises from Construction II from a vector field $X$ with a single
zero in $(-1,1)$ of order one and derivative $-1$.

\begin{thm}
  \label{thm:no_projective}
Let $G$ be locally isomorphic to $\SL(n,\BR)$, acting smoothly on a compact $n$-manifold $M$,
preserving a projective structure $[ \nabla ]$.  Then $(M,[\nabla])$
is equivalent to
\begin{itemize}
\item ${\bf S}^n$ or ${\bf RP}^n$ with the standard projective structure
\item a Hopf manifold, diffeomorphic to a flat
  circle bundle over ${\bf RP}^{n-1}$ or ${\bf S}^{n-1}$ with trivial or ${\bf
    Z}_2$ monodromy.
  \end{itemize}
  \end{thm}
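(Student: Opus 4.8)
The plan is to combine the classification of the actions themselves (Theorem~\ref{thm:orbits}, together with Theorems~\ref{thm:no_fixed_points} and~\ref{thm:with_fixed_points}) with the two projective-geometric facts recorded just above the statement: a projective structure carries a canonical Cartan geometry modeled on ${\bf RP}^n$ whose isotropy injects into $Q_{n+1}$, and a \emph{flat} projective structure admits a developing map $\delta:\widetilde M\to{\bf S}^n$ equivariant for a holonomy $\rho:\pi_1(M)\to\SL(n+1,\BR)$. By Theorem~\ref{thm:orbits} the transitive actions on a closed $n$-manifold are the Hopf manifolds $(\BR^n\setminus\{0\})/\Lambda$ (and their monodromy variants) and the exceptional orbits $\mathcal{F}_{1,2}^3$ ($n=3$) and $\mathrm{Gr}(2,4)$ ($n=4$), while the non-transitive actions are exactly Constructions I and II. I would treat the exceptional orbits separately and reduce everything else to a single mechanism: first prove projective flatness, then read off $M$ from the developing map.

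The flatness step is the heart of the argument. Every non-transitive action, as well as every Hopf action after passing to its universal cover, has a dense open orbit $G$-equivariantly identified with $\BR^n\setminus\{0\}$ carrying the restricted linear action, with point-stabilizer $E^0\cong\SL(n-1,\BR)\ltimes\BR^{n-1}$ (Theorem~\ref{thm:orbits}(3)). The projective Weyl curvature is a smooth $G$-invariant tensor, so on this orbit it is determined by its value at one point, an $E^0$-fixed vector in the projective Weyl representation of $\GL(n,\BR)$. I would show by a direct invariant-theory computation that this representation has no nonzero $E^0$-invariant vector for $n\geq 3$ (the case $n=3$ needing extra care, since $\SL(2,\BR)$ supports more invariants). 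Hence the Weyl curvature vanishes on a dense set and therefore on all of $M$, so $(M,[\nabla])$ is projectively flat.

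Flatness produces a developing map $\delta$ and a holonomy representation, and lifting the $G$-action to $\widetilde M$ yields a homomorphism $\widetilde{\SL(n,\BR)}\to\SL(n+1,\BR)$ intertwining $\delta$ with the projective $G$-action on ${\bf S}^n$. Since the irreducible representations of $\SL(n,\BR)$ of dimension at most $n+1$ are only the trivial, standard $\rho$, and dual $\rho^*$ (there is no irreducible of dimension $n+1$ for $n\geq 3$), this homomorphism must be $\rho\oplus\mathbf{1}$ or its dual; thus $\delta$ is equivariant for the standard suspended action on ${\bf S}^n={\bf S}(\BR^n\oplus\BR)$, with its two fixed points, invariant equatorial ${\bf S}^{n-1}$, and open orbits $\cong\BR^n\setminus\{0\}$. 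Equivariance of $\delta$ now pins down the single remaining degree of freedom, namely the defining vector field $X$ of the construction along the fixed circle $\Sigma^0$: away from $G$-fixed points the relation between the flow of $\{a^t\}$ and the radial dilation on ${\bf S}^n$ forces $\delta'\cdot X$ to be a nonzero constant, so $X$ is nowhere vanishing, while at a $G$-fixed point it forces the standard single-zero profile. Consequently every action without a $G$-fixed point and with $X$ vanishing is excluded (in particular the blow-up of ${\bf RP}^n$ of Theorem~\ref{thm:no_fixed_points}(4)), and every Construction~II action reduces to the standard one. Reading off the holonomy group---trivial, ${\bf Z}_2$, or generated by a nontrivial scalar---then identifies $M$ as the standard ${\bf S}^n$, the standard ${\bf RP}^n$, or a Hopf manifold, respectively.

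It remains to rule out the transitive exceptional orbits and to confirm the converse. For $\mathcal{F}_{1,2}^3$ and $\mathrm{Gr}(2,4)$ there is no dense $\BR^n\setminus\{0\}$ orbit, so the flatness argument does not apply; instead I would obstruct an invariant projective structure directly, using that the isotropy representation of the relevant parabolic on $T_xM$ cannot be realized as the Levi part of an embedding into $Q_{n+1}$ compatible with a normal Cartan connection (equivalently, these are irreducible affine spaces not of constant curvature). The converse---that the standard structures on ${\bf S}^n$ and ${\bf RP}^n$ and the flat structures inherited by the Hopf manifolds from $\BR^n$ are genuinely $G$-invariant and projective---is exactly the content recorded in the discussion preceding the statement. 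The main obstacle I anticipate is the flatness lemma: verifying uniformly that the projective Weyl representation admits no nonzero $E^0$-invariant vector, together with the low-dimensional exceptional behavior at $n=3$ and the separate obstruction for $\mathcal{F}_{1,2}^3$ and $\mathrm{Gr}(2,4)$; once flatness and the representation classification are in hand, the developing-map bookkeeping separating the three cases is routine.
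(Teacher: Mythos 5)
Your overall architecture (prove flatness, then read everything off the developing map) matches the paper's, but your mechanism for flatness is genuinely different and it has a real gap. You propose to kill the projective Weyl curvature by showing the Weyl module has no nonzero $E^0$-invariant vector and then invoking density of the open orbit. The invariant-theory claim itself is true (an irreducible $\GL(n,\BC)$-module has a vector fixed by the stabilizer of $e_1$ only if it is a symmetric power of the standard representation or its dual, since $\BC[\BC^n\setminus\{0\}]=\bigoplus_d \mathrm{Sym}^d(\BC^n)^*$, and the trace-free curvature module is not of that form). But the premise that every non-transitive action has a dense open orbit is false. The open orbits of a Construction I or II action correspond to the complement of the zero set of the defining vector field $X$ on the fixed circle, and for smooth $X$ this zero set can have nonempty interior; in the extreme case $X\equiv 0$ one gets the product actions on ${\bf S}^{n-1}\times {\bf S}^1$ and ${\bf RP}^{n-1}\times {\bf S}^1$, which have \emph{no} open orbits at all. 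These are precisely among the actions the theorem must exclude, and your mechanism gives no traction on them: the Weyl tensor at a point of a closed $(n-1)$-dimensional orbit lies in the Weyl module of a \emph{different} isotropy representation of $Q^0$ (trivial on the normal line), where $\SL(n-1,\BR)$-invariants do exist, so no contradiction falls out of invariant theory alone. The paper avoids this entirely by working at a point $p_0$ of a closed $(n-1)$-orbit: the unipotent radical of $Q^0$ has trivial differential at $p_0$ (trivial along the orbit and pointwise fixing the transversal curve of $C^0$-fixed points), so it is strongly essential, and Nagano--Ochiai's lemma forces local flatness near $O$ with no density hypothesis. The subsequent developing-map analysis then excludes the product actions because all their orbits would have to develop onto the single invariant hypersphere of ${\bf S}^n$, contradicting that $\delta$ is a local diffeomorphism.

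The second gap is the exceptional transitive orbits. Your proposed obstruction---that the isotropy cannot be realized inside $Q_{n+1}$ compatibly with a normal Cartan connection---is not an obstruction as stated: the Borel subgroup of $\SL(3,\BR)$ (isotropy of $\mathcal{F}_{1,2}^3$) and the parabolic $P_2^4$ (isotropy of $\mbox{Gr}(2,4)$) both embed abstractly into $Q_{n+1}$, with the kernel of the isotropy representation landing in the unipotent radical. The paper again uses that these kernels consist of strongly essential transformations, forcing flatness, and then derives a contradiction from the developing map because $\SL(n,\BR)$ has no closed $n$-dimensional orbit in ${\bf S}^n$. So both places where you deviate from the strongly-essential/Nagano--Ochiai mechanism are exactly the places where your argument does not close; the remaining steps (the classification of homomorphisms $\mathfrak{sl}(n,\BR)\to\mathfrak{sl}(n+1,\BR)$ and the identification of $X$ from equivariance of $\delta$) are sound once global flatness is in hand.
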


  \begin{proof}
   If the $G$-action is transitive and $n >
    4$, then it is type I and $M$ is a quotient of ${\bf R}^n
\backslash \{ 0 \}$ by a cocompact, discrete group of scalar matrices,
by Theorem \ref{thm:orbits}---a Hopf manifold.  For $n=4$, if $M$ is
not a Hopf manifold then it equals the Grassmannian $\mathcal{F}_2^4$,
up to finite covers.  Similarly, if $n=3$ and $M$ is not a Hopf
manifold, then it equals the flag variety $\mathcal{F}_{1,2}^3$, up to
finite covers.  We will show that these homogeneous spaces do not have
an invariant projective structure.

The stabilizer $P_2^4 < \SL(4,\BR)$ of a point of $\mathcal{F}_2^4$ is a semidirect product
$S(\GL(2,\BR) \times \GL(2,\BR)) \ltimes U$, where $U$ is isomorphic to the abelian
group of linear endomorphisms of $\BR^2$.  The fact that $\Ad U$ is
trivial on $\mathfrak{sl}(4,\BR)/\mathfrak{p}_2^4$ corresponds to the
differentials of all elements of $U$ being trivial at the $P_2^4$-fixed point in $\mathcal{F}_2^4$.
The stabilizer $P_{1,2}^3$ of a point of $\mathcal{F}_{1,2}^3$ is a
semidirect product $(\BR^*)^2 \ltimes N$ with $N$ isomorphic to the
$3$-dimensional Heisenberg group.  The center $Z(N)$ acts trivially
via $\Ad$ on $\mathfrak{sl}(3,\BR)/\mathfrak{p}_{1,2}^3$, which
corresponds to its differential being trivial on
$\mathcal{F}_{1,2}^3$ at the $P_{1,2}^3$-fixed point.  These
  projective transformations with trivial differential are called
  \emph{strongly essential} (see \cite{cap.me.proj.conf}, \cite{mn.1graded}).

  Nagano--Ochiai proved that if there is a strongly essential
  $1$-parameter subgroup of the stabilizer of a point $p \in M$ in the
  projective group, then a neighborhood of $p$ in $M$ is projecively
  flat \cite[Lem 5.6]{nagano.ochiai.proj}.  This gives a projective
  local diffeomorphism from a neighborhood of any point of $\mathcal{F}_2^4$ or $\mathcal{F}_{1,2}^3$ to an open
  subset of ${\bf S}^4$ or ${\bf S}^3$, respectively.  All local
  projective transformations of ${\bf S}^n$ are restrictions of
  elements of $\mbox{SL}(n+1,\BR)$ (see \cite[Thm 5.5.2]{sharpe}).  By
  transitivity of the projective $G$-actions, the developing map would be a
  $G$-equivariant projective embedding of $\mathcal{F}_2^4$ or
  $\mathcal{F}_{1,2}^3$---or a finite cover---into ${\bf S}^4$ or ${\bf S}^3$,
  respectively, for some monomorphism $G \rightarrow \SL(n+1,\BR)$.  There is no closed, $n$-dimensional orbit of
  $\SL(n,\BR)$ in the projective action on ${\bf S}^n$, so this is a contradiction.

 Next assume the $G$-action on $M$ is not transitive.  By Theorems
\ref{thm:no_fixed_points} and \ref{thm:with_fixed_points}, the action
arises from Construction I or II.  In either case there is a
closed $(n-1)$-dimensional orbit $O$, equivalent to ${\bf S}^{n-1}$ or ${\bf
  RP}^{n-1}$ by Theorem \ref{thm:orbits}.

There is a $1$-parameter group of strongly essential
projective transformations in this case, too.
Let $p_0$ be a $Q^0$-fixed
point in $O$.  The unipotent radical $U$ of $Q^0$ is in the kernel of
the differential along $O$ at $p_0$.  The $Q^0$-invariant curve of
$C^0$-fixed points runs through $p_0$ transversal to $O$; denote it
$\Sigma$.  The $Q^0$-action on $\Sigma$ factors through $\nu^0$, so
it is pointwise fixed by $U$.  Thus $U$ is in the kernel of the full
differential at $p_0$.

Now \cite[Lem 5.6]{nagano.ochiai.proj} again says that the
projective structure on $M$ is flat in a neighborhood of $p_0$.  By
$G$-invariance of the projective structure, it is projectively flat in a neighborhood $V$ of $O$.
The developing map $\delta : \widetilde{V} \rightarrow {\bf S}^n$ is a
local diffeomorphism.  Here $\widetilde{V}$ can be assumed diffeomorphic to ${\bf
  S}^{n-1} \times (-\epsilon,\epsilon)$.  The vector fields generating the $G$-action on
$\widetilde{V}$ are conjugated by $\delta$ to projective vector fields
on ${\bf S}^n$, forming a subalgebra of $\mathfrak{sl}(n+1,\BR)$ isomorphic to
$\mathfrak{sl}(n,\BR)$.  Let $G'$ be the corresponding subgroup of $\SL(n+1,\BR)$.
The developing image of $\widetilde{O}$ is an $(n-1)$-dimensional
orbit $O'$ of $G' \cong \SL(n,\BR)$.  Up to a projective
transformation of ${\bf S}^n$, it must be the hypersphere ${\bf S}^{n-1} \subset {\bf S}^n$.
Now $\delta$ restricts to an equivariant diffeomorphism
$\widetilde{O} \rightarrow O'$.

Next, $V' = \delta(\widetilde{V})$ is diffeomorphic to ${\bf  S}^{n-1} \times (-\epsilon,\epsilon)$; moreover $\delta$ is a diffeomorphism $\widetilde{V} \rightarrow V'$.
The saturation $G.\widetilde{V}$ is projectively flat, and
its developing image is the saturation $G'.V'$.  The latter set is the
complement of the two $\SL(n,\BR)$-fixed points in ${\bf S}^n$.

Replace $V$ with $G.V$ and $V'$ with $G'.V'$, and consider a point $p$ on the boundary of
$V$.  The orbit $N = G.p$ is necessarily closed.  If it is ${\bf S}^{n-1}$
or ${\bf RP}^{n-1}$, then the argument above implies that
$\widetilde{N}$ develops onto a hypersphere in ${\bf S}^n$.  Because
$\delta$ is a local diffeomorphism and maps $\widetilde{V}$ to $V'$
diffeomorphically, the
image $\delta(\widetilde{N})$ must be on the boundary of $V'$, which
comprises only points.  We conclude that the boundary of $V$ comprises
$G$-fixed points.  In Construction II there are at most two
$G$-fixed points.  If $M$ has one, then it is equivalent to ${\bf
  RP}^n$ with the standard action, and if $M$ has two, then it is
equivalent to ${\bf S}^n$ with the standard action.
    \end{proof}


\bibliographystyle{amsplain}
\bibliography{karinsrefs}

\end{document}